\newtheorem{theorem}{Theorem}[section]
\newtheorem{definition}[theorem]{Definition}
\newtheorem{lemma}[theorem]{Lemma}
\newtheorem{remark}[theorem]{Remark}
\newtheorem{corollary}[theorem]{Corollary}
\newcommand{\R}{{{\mathbb R}}}
\begin{document}
	\title{\textbf{Constant sign Green's function for simply supported beam equation.\footnote{Partially supported by Ministerio de Educaci\'on, Cultura y Deporte, Spain and FEDER, project MTM2013-43014-P.} }}
	\date{}
	\author{Alberto Cabada   \;and  Lorena Saavedra\footnote{Supported by Plan I2C scholarship, Conseller\' {\i}a de Educaci\'on, Cultura e O.U., Xunta de Galicia, and FPU scholarship, Ministerio de Educaci\'on, Cultura y Deporte, Spain.}\\Departamento de Análise Matemática,\\ Facultade de  Matemáticas,\\Universidade Santiago de Compostela,\\ Santiago de Compostela, Galicia,
		Spain\\
		alberto.cabada@usc.es, lorena.saavedra@usc.es
	} 
	\maketitle
	\begin{abstract}

The aim of this paper consists on the study of the following fourth-order operator:
\begin{equation}\label{Ec::T4}
T[M]\,u(t)\equiv u^{(4)}(t)+p_1(t)\,u'''(t)+p_2(t)\,u''(t)+M\,u(t)\,,\ t\in I \equiv [a,b]\,,
\end{equation}
coupled with the two point boundary conditions:
\begin{equation}\label{Ec::cf}
	u(a)=u(b)=u''(a)=u''(b)=0\,.	
\end{equation}

So, we define the following space:
	\begin{equation}\label{Ec::esp}
	X=\left\lbrace u\in C^4(I)\quad\mid\quad u\text{ satisfies boundary conditions \eqref{Ec::cf}}\right\rbrace \,.	
	\end{equation}

	Here $p_1\in C^3(I)$ and $p_2\in C^2(I)$.
	
By assuming that the second order linear differential equation 
	\begin{equation}\label{Ec::2or}
	L_2\, u(t)\equiv u''(t)+p_1(t)\,u'(t)+p_2(t)\,u(t)=0\,,\quad t\in I,
	\end{equation}
	is disconjugate on $I$, we  characterize  the parameter's set where the Green's function related to operator $T[M]$ in $X$ is of constant sign on $I \times I$. Such characterization is equivalent to the strongly inverse positive (negative) character of operator $T[M]$ on $X$ and comes from the first eigenvalues of operator $T[0]$ on suitable spaces.
		\end{abstract}
\section{Introduction}
In this paper, we  characterize the strongly inverse positive and negative character of  operator 
\[T[M]\,u(t)\equiv u^{(4)}(t)+p_1(t)\,u'''(t)+p_2(t)\,u''(t)+M\,u(t)\,,\quad t\in I \equiv [a,b]\,,\]
on the space of definition
\[	X=\left\lbrace u\in C^4(I)\quad\mid\quad u(a)=u(b)=u''(a)=u''(b)=0\right\rbrace \,,\]
which corresponds to the simply supported beam boundary conditions.

Once  we have obtained such result about this operator, we would be  able to obtain additional sufficient conditions which ensure the strongly inverse negative or positive character of operator
\[T[M]\,u(t)\equiv u^{(4)}(t)+p_1(t)\,u'''(t)+p_2(t)\,u''(t)+c(t)\,u(t)\,,\quad t\in I \,,\]
for a given continuous function, $c$.

This problem has been studied along the time. The particular case where $p_1(t)\equiv p_2(t)\equiv0$ on $I$ has been considered in many papers: In \cite{Sch} the result for $p_1=p_2=0$ here obtained is proved for the strongly inverse positive character. The  strongly inverse negative character for that case has been proved in \cite{cacisa}. It is important to mention that in both cases the expression of the Green's function has been used. Moreover, no spectral relationship with operator $u^{(4)}$ has been found in both references.

Moreover, in \cite{Dra}, weaker sufficient conditions to ensure either the strongly inverse positive or negative character are given for operator $u^{(4)}+c(t)\,u$. In \cite{BaFry}, it is studied the operator $u^{(4)}-(\alpha^2+\beta^2)u''+\alpha^2\,\beta^2\,u$ defined in a complex domain, with $\alpha^2\neq\beta^2$. In this case, some sufficient conditions to ensure the inverse positive character are obtained. In \cite{Liu},  some results which ensure the existence of one or more positive solutions of the problem $u^{(4)}(t)-f(t,u(t),u''(t))=0$ with the boundary conditions \eqref{Ec::cf} on the interval $[0,1]$  are obtained.

Fourth order problems with different boundary conditions have also been studied. For instance, in \cite{CE} it is characterized the inverse positive character of operator $u^{(4)}$ coupled with the boundary conditions $u(a)=u'(a)=u(b)=u'(b)=0$, which corresponds to the clamped beam boundary conditions. Furthermore, inverse negative character of this operator with the boundary conditions $u(a)=u'(a)=u''(a)=u(b)=0$ and $u(a)=u(b)=u'(b)=u''(b)=0$ has been studied in \cite{CaFe}. 

In all these cases the expression of  the related Green's function was needed to characterize the inverse positive or inverse negative character of the operator. In \cite{CaSaa}, without knowing the expression of Green's function it is obtained a characterization of inverse positive or inverse negative character for general $n^{\rm th}$ - order operators defined in the following spaces of definition:
 \begin{equation}\label{Ec::Xk}X_k=\left\lbrace u\in C^{n}(I)\ \mid\ u(a)=\cdots=u^{(k-1)}(a)=u(b)=\cdots=u^{(n-k-1)}(b)=0\right\rbrace \,,\end{equation}
 where $1\leq k\leq n-1$.
 
 Such characterization follows from spectral theory on suitable spaces related to the considered $n^{\rm th}$ - order operator.
 
 In this paper we are going to follow some ideas of this result to deduce the characterization of strongly inverse positive (negative) character of \eqref{Ec::T4}-\eqref{Ec::cf}.
 
\section{Preliminaries}

In this section, for the convenience of the reader, we introduce the fundamental tools in the theory of  Green's functions that will be used in the development of further sections. Some of these results can be found in \cite[Chapter 3]{Cop} and are valid for the general $n^{\rm th}-$ order linear operator
\begin{equation}\label{Op::n}
L_n[M]\,u(t)\equiv u^{(n)}(t)+p_1(t)\,u^{(n-1)}(t)+\cdots+ p_{n-1}(t)\,u'(t)+(p_n(t)+M)\,u(t)\,,
\end{equation} 
with $t \in I $ and $p_j\in C^{n-j}(I)$, $j=1, \ldots,n$.

\begin{definition}
 The $n^{\rm th}-$ order linear differential equation 
\begin{equation}\label{Ec::n}
L_n[M]\,u(t)=0\,,\quad t \in I
\end{equation}
is said to be disconjugate on  $I$ if every non trivial solution has less than $n$ zeros on $I$, multiple zeros being counted according to their multiplicity.
\end{definition}
\begin{definition}
	The functions $u_1,\dots, u_n \in C^n(I)$ are said to form a Markov system on  $I$ if the $n$ Wronskians
	\begin{equation}
	W(u_1,\dots,u_j)=\left| \begin{array}{ccc}
	u_1&\cdots&  u_j\\
	\vdots&\cdots&\vdots\\
	u_1^{(j-1)}&\cdots&u_j^{(j-1)}\end{array}\right| \,,\quad j=1,\dots,n \,,
	\end{equation}
	are positive throughout $I$.
\end{definition}

\begin{theorem}\label{T::4}
	The linear differential equation \eqref{Ec::n} has a Markov fundamental system of solutions on  $I$ if, and only if, it is disconjugate on $I$.
	
\end{theorem}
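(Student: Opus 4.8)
The plan is to prove the two implications separately, treating the construction of a Markov system out of disconjugacy as the substantial direction.

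First I would dispatch the easier implication, that a Markov fundamental system forces disconjugacy. Since $W(u_1,\dots,u_n)>0$ throughout $I$, the functions $u_1,\dots,u_n$ are linearly independent and span the solution space of \eqref{Ec::n}; as \eqref{Op::n} is monic, $L_n[M]$ is exactly the operator annihilating them and hence admits a \emph{P\'olya factorization}
\[
L_n[M]\,u = \rho_n\,\frac{d}{dt}\left(\rho_{n-1}\,\frac{d}{dt}\left(\cdots\,\rho_1\,\frac{d}{dt}\left(\rho_0\,u\right)\cdots\right)\right)\,,
\]
with strictly positive continuous coefficients $\rho_0,\dots,\rho_n$ obtained as suitable ratios of the Wronskians $W(u_1,\dots,u_j)$ (for instance $\rho_0=1/u_1$), their positivity being precisely the Markov condition. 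Introducing the quasi-derivatives $u^{[0]}=\rho_0\,u$ and $u^{[j]}=\rho_j\,(u^{[j-1]})'$, a nontrivial solution satisfies $u^{[n]}\equiv 0$. If it had $n$ zeros on $I$ counted with multiplicity, then repeated use of Rolle's theorem (each differentiation lowers the zero count by at most one, and multiplication by a positive $\rho_j$ preserves zeros) would give $u^{[n-1]}$ at least one zero while $u^{[n]}=\rho_n\,(u^{[n-1]})'\equiv 0$ forces $u^{[n-1]}$ to be constant, hence $u^{[n-1]}\equiv 0$; climbing back up the factorization then yields $u\equiv 0$, a contradiction. This proves disconjugacy.

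For the converse I would first upgrade disconjugacy on the compact interval $I=[a,b]$ to disconjugacy on a slightly larger interval $[a_1,b]$ with $a_1<a$. The argument is by contradiction: were disconjugacy to fail on every such extension, one could take nontrivial solutions $v_m$, each with $n$ zeros in $[a-\delta_m,b]$ and $\delta_m\downarrow 0$, normalise their initial data at $a$ to the unit sphere, and extract a subsequence along which, by continuous dependence on initial conditions, $v_m$ converges in $C^{n-1}$ to a nontrivial solution $v$; a standard lower–semicontinuity argument for zeros (again via Rolle) shows $v$ retains $n$ zeros on $[a,b]$, contradicting disconjugacy there. I expect this compactness/limit step, and in particular the careful bookkeeping of multiplicities as the zeros coalesce at the limit, to be the main obstacle.

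With the extended interval in hand, I would fix a point $c\in(a_1,a)$ and define, for $j=1,\dots,n$, the solution $u_j$ of \eqref{Ec::n} determined by $u_j^{(k)}(c)=0$ for $k=0,\dots,n-j-1$ and $u_j^{(n-j)}(c)=1$, so that $u_j$ has a zero of order exactly $n-j$ at $c$. These are linearly independent, and I claim each $W(u_1,\dots,u_j)$ is nonvanishing on $[a,b]$: if it vanished at some $t_0\in[a,b]$, there would be a nontrivial combination $v=\sum_{i=1}^{j}c_i\,u_i$ with a zero of order $\ge j$ at $t_0$; since each $u_i$ with $i\le j$ vanishes to order $n-i\ge n-j$ at $c$, the function $v$ also has a zero of order $\ge n-j$ at $c$, producing at least $n$ zeros on $[a_1,b]$ and contradicting disconjugacy. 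Hence every Wronskian has constant sign on $[a,b]$. Finally, replacing each $u_i$ by $\varepsilon_i\,u_i$ with $\varepsilon_i\in\{\pm1\}$ multiplies $W(u_1,\dots,u_j)$ by $\varepsilon_1\cdots\varepsilon_j$, and the requirements $\varepsilon_1\cdots\varepsilon_j=\operatorname{sign}W(u_1,\dots,u_j)$ are solved triangularly ($\varepsilon_1=\operatorname{sign}u_1$ and $\varepsilon_j=\operatorname{sign}W(u_1,\dots,u_j)\cdot\operatorname{sign}W(u_1,\dots,u_{j-1})$), making all Wronskians positive on $I$ and yielding the desired Markov fundamental system.
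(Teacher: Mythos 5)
Your proof is correct, but there is nothing in the paper to compare it against: Theorem \ref{T::4} is stated as background and quoted from Coppel's monograph \cite[Chapter 3]{Cop}, with no internal proof. What you have written is essentially a faithful reconstruction of the classical argument. The direction ``Markov $\Rightarrow$ disconjugate'' via the P\'olya factorization (which is precisely the paper's Theorem \ref{T::22}, so you could simply invoke it) followed by repeated Rolle on the quasi-derivatives is the standard route, and your bookkeeping there is sound: $u^{[n-1]}$ is constant with a zero, hence vanishes, and climbing back up the factorization kills $u$. Your converse is also the classical one: enlarge the interval of disconjugacy to $[a_1,b]$ by a normalization--compactness--continuous-dependence argument, take solutions $u_j$ of \eqref{Ec::n} vanishing to order exactly $n-j$ at a point $c\in(a_1,a)$, rule out zeros of the Wronskians on $[a,b]$ by counting $(n-j)+j=n$ zeros of a nontrivial combination, and fix signs triangularly. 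Two details deserve explicit mention in a complete write-up, though neither is a gap: (i) your initial conditions determine $u_j$ only up to its derivatives of order greater than $n-j$ at $c$; fix these (say, to zero) --- any choice works, since only the vanishing order enters the argument; (ii) the extension step tacitly requires extending the coefficients $p_k$ from $[a,b]$ to $[a_1,b]$ (a Taylor-polynomial extension preserves the $C^{n-k}$ regularity assumed in the paper), and one should note that the limit solution's zeros land in $[a,b]$ because $\delta_m\downarrow 0$, so the contradiction is indeed with disconjugacy on the original interval. With those remarks added, your argument is a complete and correct proof of the quoted theorem.
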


 \begin{theorem}\label{T::22}
 	The linear differential equation (\ref{Ec::n}) has a Markov system of solutions if, and only if, operator $L_n[M]$ has a representation of the form
 	\begin{equation}
 	\label{e-descomp}
 	L_n[M]\,u\equiv v_1 \,v_2\,\dots\,v_n \dfrac{d}{dt}\left( \dfrac{1}{v_n}\,\dfrac{d}{dt}\left( \cdots \dfrac{d}{dt}\left( \dfrac{1}{v_2} \dfrac{d}{dt}\left( \dfrac{1}{v_1}\,u\right) \right) \right) \right) \,,	
 	\end{equation}
 	where $v_k>0$  on $I$ and $v_k\in C^{n-k+1}(I)$ for all $k=1,\dots,n$.
 \end{theorem}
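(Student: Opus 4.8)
The plan is to prove both implications by relating the factorization~(\ref{e-descomp}) to the Wronskians $W_j=W(u_1,\dots,u_j)$ of a suitable fundamental system, following the classical P\'olya argument (as in \cite[Chapter 3]{Cop}).

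For the implication ``factorization $\Rightarrow$ Markov system'', I would first introduce the quasi-derivatives attached to~(\ref{e-descomp}): setting $u^{[0]}=u/v_1$, $u^{[k]}=\frac{1}{v_{k+1}}\,(u^{[k-1]})'$ for $1\le k\le n-1$, and $u^{[n]}=(u^{[n-1]})'$, so that $L_n[M]\,u=v_1\cdots v_n\,u^{[n]}$. The equation $L_n[M]\,u=0$ becomes equivalent to $u^{[n]}=0$, which integrates explicitly. I would then exhibit the candidate fundamental system by nested integration,
\[
u_j(t)=v_1(t)\int_a^t v_2(s_1)\int_a^{s_1}v_3(s_2)\cdots\int_a^{s_{j-2}}v_j(s_{j-1})\,ds_{j-1}\cdots ds_1\,,\quad j=1,\dots,n\,,
\]
each of which solves $L_n[M]\,u=0$ and has the required regularity thanks to the smoothness $v_k\in C^{n-k+1}(I)$. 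The remaining task is to verify positivity of every Wronskian: I would establish, by induction on $j$ and using the triangular action of the quasi-derivatives on the $u_i$, the determinant identity $W(u_1,\dots,u_j)=v_1^{\,j}\,v_2^{\,j-1}\cdots v_j$, from which positivity is immediate since each $v_k>0$.

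For the converse, ``Markov system $\Rightarrow$ factorization'', I would define the coefficients directly from the Wronskians through the P\'olya formula
\[
v_1=W_1\,,\qquad v_k=\frac{W_{k-2}\,W_k}{W_{k-1}^{2}}\,,\quad k=2,\dots,n\,,
\]
with the convention $W_0\equiv 1$. Positivity $v_k>0$ is immediate from $W_j>0$, and the regularity $v_k\in C^{n-k+1}(I)$ follows by counting derivatives, since $u_1,\dots,u_n\in C^{n}(I)$ forces $W_k\in C^{n-k+1}(I)$, so that each ratio lies in the stated class. It then remains to show that the operator $\widetilde L$ built from these $v_k$ via~(\ref{e-descomp}) coincides with $L_n[M]$.

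Rather than expand both sides, I would close the argument by uniqueness: both $\widetilde L$ and $L_n[M]$ are \emph{monic} linear differential operators of order $n$, since the leading coefficient produced by~(\ref{e-descomp}) is $v_1\cdots v_n\cdot(v_1\cdots v_n)^{-1}=1$, and a monic $n^{\rm th}$-order operator is determined by its $n$-dimensional kernel. Hence it suffices to check that $u_1,\dots,u_n\in\ker\widetilde L$, i.e. that each $u_i$ is annihilated by the nested quasi-derivative operator defined by these $v_k$; equivalently, that the P\'olya coefficients reproduce the given $W_j$ through the second-difference relation $\log v_k=\log W_k-2\log W_{k-1}+\log W_{k-2}$. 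I expect this determinant bookkeeping—matching the precise form of the factorization coefficients against the Wronskians—to be the main obstacle, while the positivity and regularity claims are comparatively routine once the identities are in place.
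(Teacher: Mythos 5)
The paper itself offers no proof of this statement: it is quoted as a known result from Coppel's monograph \cite{Cop}, and the proof given there is precisely the classical P\'olya argument your sketch follows, so that is the benchmark. Your forward implication is essentially complete: the nested-integral functions $u_j$ satisfy $u_j^{[j-1]}\equiv 1$, hence $u_j^{[n]}=0$, so they solve \eqref{Ec::n}; and the identity $W(u_1,\dots,u_j)=v_1^{j}v_2^{j-1}\cdots v_j$, which does follow by the induction you indicate, gives positivity of all the Wronskians on all of $I$ (including at $t=a$, where the integrals vanish but the identity still applies). The positivity and regularity bookkeeping in the converse ($W_k\in C^{n-k+1}(I)$, $v_k>0$) is also correct, as is the reduction, via monic uniqueness, to showing $u_1,\dots,u_n\in\ker\widetilde L$.

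The gap is in how you propose to carry out that last verification. You assert that $u_i\in\ker\widetilde L$ is ``equivalently'' the second-difference relation $\log v_k=\log W_k-2\log W_{k-1}+\log W_{k-2}$; but that relation is nothing more than the definition of $v_k$ rewritten, it holds trivially, and it carries no information about $\ker\widetilde L$ --- the step is circular, and the heart of the converse is missing. What actually closes the argument is the Frobenius identity for bordered Wronskians: for any sufficiently smooth $y$,
\begin{equation*}
\frac{d}{dt}\left(\frac{W(u_1,\dots,u_{k-1},y)}{W_{k}}\right)=\frac{W(u_1,\dots,u_{k},y)\,W_{k-1}}{W_{k}^{2}}\,,\qquad W_0\equiv 1\,.
\end{equation*}
With $v_1=W_1$ and $v_k=W_{k-2}W_k/W_{k-1}^{2}$, this identity gives, by induction on $k$,
\begin{equation*}
\frac{1}{v_{k+1}}\,\frac{d}{dt}\left(\cdots\frac{1}{v_2}\,\frac{d}{dt}\left(\frac{y}{v_1}\right)\cdots\right)=\frac{W(u_1,\dots,u_{k},y)}{W_{k+1}}\,,\qquad 1\le k\le n-1\,,
\end{equation*}
and then, applying the identity once more for the outermost derivative and using that $v_1\cdots v_n$ telescopes to $W_n/W_{n-1}$, one obtains $\widetilde L\,y=W(u_1,\dots,u_n,y)/W_n$. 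Taking $y=u_i$ makes this bordered Wronskian vanish (two equal columns), so $u_i\in\ker\widetilde L$, and your uniqueness argument finishes the proof. Without this identity the converse does not close; with it, your outline becomes exactly the classical proof the paper cites.
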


 	\begin{theorem}\label{T::comp}
 		Let $\hat L$ and $\tilde L$ be two $n^{\rm th}$ and $m^{\rm th}-$ order (respectively) linear differential operators following the expression \eqref{Op::n} for adequate coefficients $\hat p_k$ and $\tilde p_k$. If both equations $\hat L\,y=0$ and $\tilde L\,y=0$ are disconjugate on the interval $I$, then the composite $(n+m)^{\rm th}-$ order linear  equation $\hat L\,(\tilde L\,y)=0$ is also disconjugate on $I$.
 	\end{theorem}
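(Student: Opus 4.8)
The natural strategy is to exploit the factorization characterization of disconjugacy provided by Theorems \ref{T::4} and \ref{T::22}: since both $\hat L\,y=0$ and $\tilde L\,y=0$ are disconjugate, each operator admits a representation of the form \eqref{e-descomp}. Thus I would first write, with $D\equiv\frac{d}{dt}$ and positive functions $w_1,\dots,w_n$ and $v_1,\dots,v_m$,
\[\hat L=(w_1\cdots w_n)\circ D\circ\tfrac{1}{w_n}\circ D\circ\cdots\circ D\circ\tfrac{1}{w_1},\qquad \tilde L=(v_1\cdots v_m)\circ D\circ\tfrac{1}{v_m}\circ D\circ\cdots\circ D\circ\tfrac{1}{v_1},\]
where each factor $\tfrac{1}{w_k}$ or $\tfrac{1}{v_k}$ denotes multiplication by that positive function.

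Next I would compose the two factorizations. Each operator is an alternating composition of multiplications (by positive functions) and copies of $D$, beginning and ending with a multiplication. In the product $\hat L\circ\tilde L$ the outermost multiplication of $\tilde L$, namely by $v_1\cdots v_m$, becomes adjacent to the innermost multiplication of $\hat L$, namely by $\tfrac{1}{w_1}$; these two adjacent multiplications merge into a single multiplication by the positive function $\tfrac{v_1\cdots v_m}{w_1}$. The resulting expression is again an alternating composition of $n+m$ copies of $D$ and $n+m+1$ multiplications by positive functions, starting and ending with a multiplication.

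Then I would check that this is precisely the canonical form \eqref{e-descomp} for an operator of order $n+m$. Reading the inner multiplications from the innermost outward, one sets $V_k=v_k$ for $1\le k\le m$, $V_{m+1}=\tfrac{w_1}{v_1\cdots v_m}$, and $V_{m+1+j}=w_{j+1}$ for $1\le j\le n-1$, all positive on $I$. The only compatibility to verify is that the outermost factor equals $V_1\cdots V_{n+m}$; this holds automatically, since $V_1\cdots V_{n+m}=w_1\cdots w_n$, which is exactly the outer factor inherited from $\hat L$ — equivalently, the product of all the multiplication factors of $\hat L\circ\tilde L$ is $1$, reflecting that the composite is monic. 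With representation \eqref{e-descomp} in hand, Theorem \ref{T::22} furnishes a Markov system of solutions for $\hat L\,(\tilde L\,y)=0$, and Theorem \ref{T::4} then yields its disconjugacy on $I$.

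I expect the main obstacle to be purely technical: verifying that the functions $V_k$ meet the regularity $V_k\in C^{(n+m)-k+1}(I)$ demanded by Theorem \ref{T::22}, since the low-index factors must be differentiated more often than the minimal smoothness guaranteed for the individual $v_k$ and $w_k$. This should follow from the smoothness of the coefficients $\hat p_k$ and $\tilde p_k$ (hence of the Markov solutions out of which the factorizations are built), but it is the step that requires genuine care. Should this bookkeeping prove awkward, an alternative is to bypass Theorem \ref{T::22} and deduce disconjugacy directly from the positive first-order factorization of $\hat L\circ\tilde L$ by an iterated Rolle's theorem argument, bounding the number of zeros of any nontrivial solution by $n+m-1$.
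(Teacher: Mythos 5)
Your proposal is correct and follows essentially the same route as the paper's source: Theorem \ref{T::comp} is not proved in the paper itself but quoted from Coppel's \emph{Disconjugacy} (Chapter 3), where it is obtained exactly as you describe, by composing the two canonical factorizations of $\hat L$ and $\tilde L$ and merging the two adjacent multiplication operators into a single multiplication by a positive function. Your closing remark is also well judged: finishing with the iterated Rolle argument applied directly to the composed factorization is the cleanest way to conclude disconjugacy, since it sidesteps the regularity bookkeeping needed to invoke Theorem \ref{T::22} verbatim for the $(n+m)^{\rm th}$-order operator.
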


 	The following result, which appears on \cite[Theorem 3.2]{Neh}, shows a property of the eigenvalues of a disconjugate operator in these particular spaces $X_k$.
 	
 	\begin{theorem}\label{T::10}
 		Let $\bar{M}\in\mathbb{R}$ be such that equation $L_n[\bar{M}]\,u(t)=0$ is disconjugate on $I$. Then for any $1\leq k \leq n-1$ the following properties hold:
 		\begin{itemize}
 			\item If $n-k$ is even, there is not any eigenvalue of $L_n[\bar{M}]$ on $X_k$  such that $\lambda<0$.
 			\item If $n-k$ is odd, there is not any eigenvalue of $L_n[\bar{M}]$ on $X_k$  such that $\lambda>0$.
 		\end{itemize} 
 	\end{theorem}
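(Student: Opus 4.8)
The plan is to prove Theorem \ref{T::10} (the Nehari-type result on the sign of eigenvalues of a disconjugate operator on the spaces $X_k$) by combining the factorization from Theorem \ref{T::22} with an integration-by-parts argument that controls the sign of a suitable quadratic form. Let me sketch how I would proceed.

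Let me think about the structure. We have $L_n[\bar M]$ disconjugate, and we want to show that eigenvalues $\lambda$ of $L_n[\bar M]$ on $X_k$ cannot be negative when $n-k$ is even, nor positive when $n-k$ is odd. An eigenvalue $\lambda$ with eigenfunction $u\in X_k$ satisfies $L_n[\bar M]u=\lambda u$, equivalently $L_n[\bar M-\lambda]u=0$. So the statement is really about which shifts $\bar M-\lambda$ can admit a nontrivial solution in $X_k$, i.e. which shifts destroy disconjugacy in the boundary-value sense. The natural strategy is to use the factored form to write the quadratic form $\int_I u\,L_n[\bar M]u\,dt$ and show it has a definite sign on $X_k$.

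\textbf{Main steps.} First I would invoke Theorem \ref{T::22}: since $L_n[\bar M]$ is disconjugate, it admits a representation $L_n[\bar M]u = v_1 v_2\cdots v_n\,\tfrac{d}{dt}\bigl(\tfrac{1}{v_n}\tfrac{d}{dt}(\cdots\tfrac{1}{v_2}\tfrac{d}{dt}(\tfrac{1}{v_1}u)\cdots)\bigr)$ with each $v_k>0$ on $I$. Second, given an eigenpair $(\lambda,u)$ with $u\in X_k$, I would multiply $L_n[\bar M]u=\lambda u$ by an appropriate weight and integrate over $I$, then integrate by parts repeatedly, peeling off one differential operator $\tfrac{d}{dt}$ at a time from the nested factorization. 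Each integration by parts produces a boundary term; the crux is that the membership $u\in X_k$ forces enough of these boundary terms to vanish (the $k$ conditions at $a$ and the $n-k$ conditions at $b$ from \eqref{Ec::Xk} are exactly matched to kill the boundary contributions at each stage). Third, after all integrations by parts the bulk integral reduces to a manifestly one-signed expression — a weighted square of the top-order ``quasi-derivative'' of $u$ — whose overall sign is determined purely by the parity of the number of integrations by parts, i.e. by the parity of $n-k$. Comparing this fixed sign against $\lambda\int_I w\,u^2\,dt$ (with $w>0$) on the right-hand side yields the sign restriction on $\lambda$, giving exactly the two parity cases stated.

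\textbf{Expected obstacle.} The delicate part is bookkeeping the boundary terms in the repeated integration by parts and confirming that the boundary conditions defining $X_k$ — namely $u(a)=\cdots=u^{(k-1)}(a)=0$ together with $u(b)=\cdots=u^{(n-k-1)}(b)=0$ — are precisely the right number and placement to annihilate every boundary term generated, rather than leaving a residual term of indeterminate sign. One subtlety is that the factorization involves quasi-derivatives built from the $v_k$ (not the ordinary derivatives), so I must verify that vanishing of the ordinary derivatives $u^{(j)}$ at the endpoints propagates to vanishing of the corresponding quasi-derivatives; this follows since each $v_k$ is smooth and strictly positive, but it requires care in relating the two derivative hierarchies. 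If a cleaner route is available, I would instead appeal directly to the referenced source \cite[Theorem 3.2]{Neh}, but the self-contained argument above via Theorem \ref{T::22} is the approach I would carry out to make the sign dependence on the parity of $n-k$ transparent.
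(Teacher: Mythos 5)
The first thing to note is that the paper offers no proof of this statement to compare against: Theorem \ref{T::10} is quoted from Nehari (\cite[Theorem 3.2]{Neh}) and used as a black box. Your argument therefore has to stand on its own, and it does not: the ``expected obstacle'' you flag is not a bookkeeping issue but the exact point where the approach collapses. Integrating $\int_a^b u\,L_n[\bar{M}]u\,dt$ by parts (in ordinary or quasi-derivative form; since the $v_i$ are smooth and positive, the two derivative hierarchies vanish simultaneously at the endpoints, so this distinction is immaterial) produces boundary products of derivatives of orders $i$ and $j$ with $i+j=n-1$. Such a product vanishes at $t=a$ only if $i\le k-1$ or $j\le k-1$, so every term with $k\le i\le n-1-k$ survives; such terms exist whenever $k<n/2$, and symmetrically terms survive at $t=b$ whenever $k>n/2$. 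Hence the $(k,n-k)$ conditions annihilate all boundary terms only in the case $k=n/2$. The cases this paper actually needs ($n=4$ with $k=1$ and $k=3$, invoked in Lemmas \ref{L::10} and \ref{L::12}) are precisely cases where residual boundary terms of indeterminate sign remain.

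Moreover, the scheme is structurally incapable of producing the stated parity, so no refinement of the bookkeeping can fix it. The bulk term after repeated integration by parts is a weighted square whose sign depends only on $n$, while the sign in the theorem flips with the parity of $n-k$. Concretely, take $u^{(4)}$ on $[0,1]$: on $X_2$ one gets $\lambda\int_0^1 u^2\,dt=\int_0^1(u'')^2\,dt>0$, consistent with all eigenvalues being positive; on $X_1$ the same computation gives $\lambda\int_0^1 u^2\,dt=u'(0)\,u''(0)+\int_0^1(u'')^2\,dt$, yet the theorem asserts (and the paper uses, via $\lambda_2'<0$) that every eigenvalue of $u^{(4)}$ in $X_1$ is negative --- so for every eigenfunction the surviving boundary term is negative and strictly dominates the positive bulk integral. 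The same bulk form thus coexists with eigenvalues of both signs depending on $k$, which rules out any argument extracting the sign from the bulk; and no weight can help, since for compactly supported $u\in X_1$ all boundary terms vanish and $\int w\,u\,u^{(4)}\,dt>0$ for oscillatory $u$, so the form is genuinely indefinite on $X_1$. (A further independent obstruction: a general disconjugate $L_n[\bar{M}]$ is not self-adjoint, so $\int u\,L_n[\bar{M}]u\,dt$ is not even a quadratic form of the required type.) What a correct self-contained proof needs is finer information than constant sign or sign-definite forms: the Green's function of a disconjugate operator under $(k,n-k)$ conditions is, up to the factor $(-1)^{n-k}$, an oscillation (totally positive) kernel, and Gantmacher--Krein theory then forces all eigenvalues of the associated integral operator to have one sign --- note that a merely nonnegative kernel can perfectly well have negative eigenvalues. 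That is the substance of Nehari's proof; short of reproducing it, citing \cite[Theorem 3.2]{Neh} as the paper does is the correct route.
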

 
In order to introduce the concept of Green's function related to  operator $T[M]$ in $X$, we consider the following equivalent first order vectorial problem:

\begin{equation}\label{Ec::vec}
x'(t)=A(t)\, x(t)\,,\ t\in I\,,\quad 
B\,x(a)+C\,x(b)=0,
\end{equation}
with $x(t) \in \R^4$, $A(t), \, B,\,\ C\in \mathcal{M}_{4\times 4}$, defined by
\[x(t)=\left( \begin{array}{c} 
u(t)\\
u'(t)\\
u''(t)\\
u'''(t) \end{array}\right),\, \quad A(t)=\left( \begin{array}{cccc}
0&1&0&0\\0&0&1&0\\0&0&0&1\\-M&0&-p_2(t)&-p_1(t) \end{array}\right), \]

\begin{equation}\label{Ec::Cf}
B=\left( \begin{array}{cccc} 1&0&0&0\\0&0&1&0\\0&0&0&0\\0&0&0&0\end{array}\right), \;\quad C=\left( \begin{array}{cccc} 0&0&0&0\\0&0&0&0\\
1&0&0&0\\0&0&1&0\end{array}\right).
\end{equation}

\begin{definition} \label{Def::G}
	We say that $G$ is a Green's function for vectorial problem \eqref{Ec::vec} if it satisfies the following properties:
	\begin{itemize}
		\item[$\mathrm{(G1)}$] $G\equiv (G_{i,j})_{i,j\in\{1,\dots,4\}}\colon (I\times I)\backslash \left\lbrace (t,t)\,,\ t\in I\right\rbrace \rightarrow \mathcal{M}_{4\times 4}$.
		
		\item[$\mathrm{(G2)}$] $G$ is a $C^{1}$ function on the triangles $\left\lbrace (t,s)\in \mathbb{R}^2\,,\quad a\leq s<t\leq b\right\rbrace $ and $\left\lbrace (t,s)\in \mathbb{R}^2\,,\ a\leq t < s\leq b\right\rbrace $.
		
		\item[$\mathrm{(G3)}$] For all $i\neq j$ the scalar functions $G_{i,j}$ have a continuous extension to $I\times I$.
		
		\item[$\mathrm{(G4)}$] For all $s\in(a,b)$, the following equality holds:
		\[\dfrac{\partial }{\partial t}\, G(t,s)=A(t)\,G(t,s)\,,\quad \text{for all } t\in I\backslash \left\lbrace s\right\rbrace \,.\]
		
		\item[$\mathrm{(G5)}$] For all $s\in(a,b)$ and $i\in\left\lbrace 1,\dots, 4\right\rbrace $, the following equalities are fulfilled:
		\[\lim_{t\rightarrow s^-}G_{i,i}(s,t)=\lim_{t\rightarrow s^+}G_{i,i}(t,s)=1+\lim_{t\rightarrow s^-}G_{i,i}(t,s)=1+\lim_{t\rightarrow s^+}G_{i,i}(s,t)\,.\]
		
		\item[$\mathrm{(G6)}$] For each $s\in(a,b)$, the function $t\rightarrow G(t,s)$ satisfies the boundary conditions
		\[B\,G(a,s)+C\,G(b,s)=0\,.\]
	\end{itemize}
\end{definition}

\begin{remark}\label{Rm::2.5}
	On  previous definition, item $\mathrm{(G5)}$ can be modified to obtain the characterization of the lateral limits for $s=a$ and $s=b$ as follows:	
	\[\lim_{t\rightarrow a^+}G_{i,i}(t,a)=1+\lim_{t\rightarrow a^+}G_{i,i}(a,t)\,,\quad\text{and}\quad \lim_{t\rightarrow b^-}G_{i,i}(b,t)=1+\lim_{t\rightarrow b^-}G_{i,i}(t,b)\,.\]
\end{remark}

It is very well known that Green's function related to this problem follows the expression (\cite[Section 1.4]{Cab})
\begin{equation}\label{Ec:MG1} G(t,s)=\left( \begin{array}{cccc}
g_1(t,s)&g_2(t,s)&g_{3}(t,s)&g_M(t,s)\\&&&\\
\dfrac{\partial }{\partial t}\,g_1(t,s)& \dfrac{\partial }{\partial t}\,g_2(t,s)&\dfrac{\partial }{\partial t}\,g_{3}(t,s)& \dfrac{\partial }{\partial t}\,g_M(t,s)\\&&&\\
\dfrac{\partial^2 }{\partial t^2}\,g_1(t,s)& \dfrac{\partial^2 }{\partial t^2}\,g_2(t,s)&\dfrac{\partial^2 }{\partial t^2}\,g_{3}(t,s)& \dfrac{\partial^2 }{\partial t^2}\,g_M(t,s)\\&&&\\
\dfrac{\partial^3 }{\partial t^3}\,g_1(t,s)& \dfrac{\partial ^3}{\partial t^3}\,g_2(t,s)&\dfrac{\partial^3 }{\partial t^3}\,g_{3}(t,s)& \dfrac{\partial^3 }{\partial t^3}\,g_M(t,s)\end{array} \right) \,,\end{equation}
where $g_M$ is the scalar Green's function related to  operator $T[M]$ in $X$. 

Using  Definition \ref{Def::G} we can deduce the properties fulfilled by $g_M$. In particular, $g_M\in C^{2}(I\times I)$. Moreover  it is a $C^{4}-$ function on the triangles $\left\lbrace (t,s)\in \mathbb{R}^2\,,\quad a\leq s<t\leq b\right\rbrace $ and $\left\lbrace (t,s)\in \mathbb{R}^2\,,\ a\leq t < s\leq b\right\rbrace $, it satisfies, as a function of $t$, the two-point boundary value conditions \eqref{Ec::cf} and solves equation \eqref{Ec::T4} for all $t \in I \backslash \{s\}$.

Studying the matrix Green's function, we can make a relation between $g_1$, $g_2$, $g_3$ and $g_M$ and we can express them as follows in this particular case, (see \cite{CaSaa} for details),
\begin{eqnarray}\nonumber g_1(t,s)&=&-\dfrac{\partial^3}{\partial s^3}g_M(t,s)+p_1(s)\,\dfrac{\partial^2}{\partial s^2}g_M(t,s)+(2\,p_1'(s)-p_2(s))\,\dfrac{\partial}{\partial s}g_M(t,s)\\\nonumber\\\nonumber&&+(p_1''(s)-p_2'(s))\,g_M(t,s)\,,\\\nonumber
\\\nonumber
g_2(t,s)&=&\dfrac{\partial^2}{\partial s^2}g_M(t,s)-p_1(s)\,\dfrac{\partial}{\partial s}g_M(t,s)+(p_2(s)-p_1'(s))\,g_M(t,s)\,,\\\nonumber\\\label{Ec::g3}
g_3(t,s)&=& -\dfrac{\partial }{\partial s}g_M(t,s)+p_1(s)\,g_M(t,s)\,.
\end{eqnarray}

	Next result appears in \cite[Chapter 3, Theorem 9]{Cop}
	\begin{theorem}\label{T::ad}
		A linear differential equation \eqref{Ec::n} is disconjugate on  $I$ if, and only if, its adjoint equation, $L_n^*[M]\,y(t)=0$ is disconjugate on $I$.
	\end{theorem}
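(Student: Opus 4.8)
The plan is to deduce the result from the Pólya-type factorization of Theorem \ref{T::22}, since that theorem (together with Theorem \ref{T::4}) makes disconjugacy equivalent to the existence of a factorization of the form \eqref{e-descomp} with positive, sufficiently smooth coefficients. A first useful observation is that the formal adjoint is involutive, $(L_n^*[M])^*=L_n[M]$, so it suffices to prove a single implication: if \eqref{Ec::n} is disconjugate then $L_n^*[M]\,y=0$ is disconjugate. The converse is then obtained by applying this implication to the operator $L_n^*[M]$ in place of $L_n[M]$.

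So, assuming \eqref{Ec::n} disconjugate, I would first invoke Theorem \ref{T::4} to get a Markov fundamental system and then Theorem \ref{T::22} to write
\[
L_n[M]\,u = v_1 v_2\cdots v_n\,\frac{d}{dt}\left(\frac{1}{v_n}\,\frac{d}{dt}\left(\cdots\frac{d}{dt}\left(\frac{1}{v_1}\,u\right)\cdots\right)\right),
\]
with each $v_k>0$ on $I$ and $v_k\in C^{n-k+1}(I)$. Reading the right-hand side as a composition of multiplication operators and the operator $\frac{d}{dt}$, I would compute the formal adjoint using the elementary rules that multiplication by a function is self-adjoint, that $(\frac{d}{dt})^*=-\frac{d}{dt}$, and that $(A\circ B)^*=B^*\circ A^*$. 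Reversing the order of the factors and collecting the $n$ sign changes produced by the $n$ copies of $\frac{d}{dt}$ yields
\[
L_n^*[M]\,y = (-1)^n\,\frac{1}{v_1}\,\frac{d}{dt}\left(\frac{1}{v_2}\,\frac{d}{dt}\left(\cdots\frac{d}{dt}\left(v_1 v_2\cdots v_n\,y\right)\cdots\right)\right).
\]

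The core of the argument is to recognize this reversed expression, up to the harmless global factor $(-1)^n$, as a factorization of exactly the normalized shape \eqref{e-descomp}. Indeed, one checks that the coefficients can be renamed as positive functions $w_k$ (with $w_1=1/(v_1\cdots v_n)$ and $w_j=v_{n-j+2}$ for $2\le j\le n$), whose outer product $w_1\cdots w_n$ reproduces precisely the required leading coefficient $1/v_1$; this consistency is forced by the normalization $v_1^{-1}\cdots v_n^{-1}\,(v_1\cdots v_n)=1$ built into \eqref{e-descomp}. Since $L_n^*[M]$ has leading coefficient $(-1)^n$, one works with $L_n^*[M]$ when $n$ is even and with $-L_n^*[M]$ when $n$ is odd; because multiplying an equation by $-1$ changes neither its solution set nor the location or multiplicity of zeros, disconjugacy is unaffected. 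Hence the monic operator thus obtained is in the form \eqref{e-descomp} with positive coefficients $w_k$, so by Theorem \ref{T::22} it admits a Markov system, and by Theorem \ref{T::4} the adjoint equation is disconjugate, completing the proof.

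The main obstacle I anticipate is not conceptual but bookkeeping: the index reversal must be carried out carefully to confirm that the adjoint factorization is genuinely of Pólya type rather than only formally similar, and one must verify that the regularity $v_k\in C^{n-k+1}(I)$ guaranteed by Theorem \ref{T::22} translates, after reversal, into enough smoothness for the new coefficients $w_k$ to satisfy the hypotheses demanded when Theorem \ref{T::22} is reapplied. Neither point is deep, but the simultaneous reversal of both the multiplication coefficients and their required differentiability classes is where an error could easily creep in.
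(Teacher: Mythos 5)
The paper itself gives no proof of Theorem \ref{T::ad}: it is quoted from \cite[Chapter 3, Theorem 9]{Cop}. Your strategy (reduce to one implication via $(L_n^*[M])^*=L_n[M]$, then reverse the P\'olya factorization of Theorem \ref{T::22}) is the classical mechanism, and it is the same one the paper borrows in Section \ref{Sc::adj} for its fourth-order case, formula \eqref{e-de-T*[0]}. Your algebra is also correct: the identification $w_1=1/(v_1\cdots v_n)$, $w_j=v_{n-j+2}$ for $2\le j\le n$, the outer product collapsing to $1/v_1$, and the harmlessness of the factor $(-1)^n$ are all right. The genuine gap is exactly the point you flag at the end and then dismiss as bookkeeping: the regularity of the reversed coefficients. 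Theorem \ref{T::22} only guarantees $v_k\in C^{n-k+1}(I)$, so $v_n$ is merely $C^1$ and $v_1\cdots v_n$ is merely $C^1$. Consequently $w_1=1/(v_1\cdots v_n)$ is only $C^1$ where re-applying Theorem \ref{T::22} demands $C^n$, and $w_2=v_n$ is only $C^1$ where $C^{n-1}$ is demanded; this failure occurs for every $n\ge 2$. Worse, with this regularity your displayed formula for $L_n^*[M]\,y$ is not even known to make sense as an operator on $C^n$ functions: $\frac{1}{v_n}\,\frac{d}{dt}\left(v_1\cdots v_n\,y\right)$ is in general only continuous and cannot be differentiated again, so neither the operator identity nor the subsequent appeal to Theorems \ref{T::22} and \ref{T::4} is justified.

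That this obstruction is substantive, not presentational, can be read off the paper itself: in Section \ref{Sc::des} the authors work to upgrade the generic regularity of the second-order factorization (which for $n=2$ would only be $v_1\in C^2$, $v_2\in C^1$) to $v_1\in C^4(I)$, $v_2\in C^3(I)$, exploiting that all solutions of \eqref{Ec::2or} are $C^4$ because $p_1\in C^3$ and $p_2\in C^2$; and in Section \ref{Sc::adj} they explicitly remark that this extra regularity ``is required in order to ensure the validity'' of the adjoint decomposition. In the general setting of Theorem \ref{T::ad} no such upgrade comes for free, since solutions of \eqref{Ec::n} are only $C^n$ when $p_n$ is only continuous. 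Two standard repairs: (i) prove that the canonical $v_k$ are in fact smoother than Theorem \ref{T::22} states, starting from Liouville's formula $W(y_1,\dots,y_n)=c\,e^{-\int p_1}\in C^n(I)$ together with the fact that the complementary Wronskians are multiples of solutions of the adjoint equation; or (ii) bypass factorizations entirely, as in the classical literature: the cofactor functions $z_i=W(y_1,\dots,\widehat{y_i},\dots,y_n)/W(y_1,\dots,y_n)$ form a fundamental system of $L_n^*[M]\,z=0$ (they are automatically $C^n$ because they solve that equation), and Jacobi's identity on complementary minors shows that a Markov system $(y_i)$ produces, after reordering and sign changes, a Markov system for the adjoint, whence Theorem \ref{T::4} gives disconjugacy. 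Either way, the missing regularity argument is the actual content of the theorem, not an afterthought.
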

	
	We denote $g_M^*(t,s)$ as the Green's function related to the adjoint operator $L_n^*[M]$.
	
	In \cite[Section 1.4]{Cab} it is proved the following relationship
	\begin{equation} \label{Ec::gg}
	g^*_M(t,s)=g_M(s,t)\,,\quad\forall (t,s)\in I\times I\,.
	\end{equation}

	Now, we introduce the following space of functions:
		$$X_U=\left\{ u \in C^n(I), \quad 
 \sum_{j=0}^{n-1}\left(\alpha_j^i\, u^{(j)}(a) + \beta_j^i\,u^{(j)}(b)\right)=0,\; i=1,\ldots,n\right\},$$
being $\alpha_j^i,\beta_j^i$ real constants for all $i=1,\ldots,n,$ and $j=0,\ldots,n-1$.

	\begin{definition}
		\label{d-IP}
		Operator $L_n[M]$ is said to be inverse positive (inverse negative) on  $X_U$, if every function $u \in X_U$ such that $L_n[M]\, u \ge 0$ in $I$, satisfies $u\geq 0$ ($u\leq 0$) on $I$.
	\end{definition}
	
	Next results are proved in \cite[Sections 1.6 and 1.8]{Cab}.
	
	\begin{theorem}\label{T::in1}
		Operator $L_n[M]$ is inverse positive (inverse negative) on $X_U$ if, and only if, Green's function related to operator $L_n[M]$ in $X_U$ is non-negative (non-positive) on $I\times I$.
	\end{theorem}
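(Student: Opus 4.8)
The plan is to prove both implications through the integral representation furnished by the Green's function. Recall that, whenever $g$ denotes the Green's function of $L_n[M]$ in $X_U$, the unique solution of $L_n[M]\,u=h$ with $u\in X_U$ is given by
\[
u(t)=\int_a^b g(t,s)\,h(s)\,ds\,,\qquad t\in I\,,
\]
for every $h\in C(I)$. I would take this representation as the basic link between the operator and its kernel; it is precisely the content of the construction of $g$ recalled in the references quoted before the statement, and it also guarantees that the integral operator maps $C(I)$ into $X_U$.

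Sufficiency is immediate. Assuming $g\geq 0$ on $I\times I$, take any $u\in X_U$ with $h:=L_n[M]\,u\geq 0$ on $I$. Then $u(t)=\int_a^b g(t,s)\,h(s)\,ds$ is the integral of a product of two non-negative functions, whence $u(t)\geq 0$ for all $t\in I$, which is exactly inverse positivity.

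For necessity I would argue by contradiction. Suppose $L_n[M]$ is inverse positive but $g(t_0,s_0)<0$ for some $(t_0,s_0)\in I\times I$. Since the scalar Green's function is continuous on $I\times I$ (the jump across the diagonal is confined to the $(n-1)$-st derivative, while $g$ itself extends continuously), the section $s\mapsto g(t_0,s)$ is continuous, so there is a subinterval $J\subset I$ containing $s_0$ on which $g(t_0,\cdot)<0$. Choosing any continuous $h\geq 0$ with $h\not\equiv 0$ and support contained in $J$, the associated solution $u\in X_U$ satisfies $u(t_0)=\int_J g(t_0,s)\,h(s)\,ds<0$, contradicting $u\geq 0$. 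Hence $g\geq 0$ everywhere.

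The step I expect to require the most care is this last construction: one must ensure that the bump $h$ can indeed be placed inside $J$ — handling the degenerate cases $s_0=a$ or $s_0=b$ by taking a one-sided neighbourhood — and that the resulting $u$ genuinely belongs to $X_U$, both of which follow from the regularity of $g$ and from the mapping property of the integral operator noted above. The inverse negative statement is obtained verbatim by reversing all the inequalities, so no separate argument is needed.
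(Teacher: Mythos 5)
Your proposal is correct and is essentially the same argument as the one the paper relies on: Theorem \ref{T::in1} is not proved in the paper itself but quoted from \cite{Cab}, where sufficiency follows directly from the representation $u(t)=\int_a^b g_M(t,s)\,h(s)\,ds$ and necessity from testing against a nonnegative continuous bump supported where the continuous section $g_M(t_0,\cdot)$ is negative. Your added remarks on the regularity of the kernel (continuity of $g_M$ itself, jump only in the $(n-1)$-st derivative) and on the one-sided treatment of $s_0\in\{a,b\}$ are exactly the details needed, so there is no gap.
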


	\begin{theorem}\label{T::d1}
		Let $M_1$, $M_2\in\mathbb{R}$ and suppose that operators $L_n[M_j]$, $j=1,2$, are invertible in $X_U$.
		Let $g_j$, $j=1,2$, be Green's functions related to  operators $L_n[M_j]$ and suppose that both functions have the same constant sign on $I \times I$. Then, if $M_1<M_2$, it is satisfied that $g_2\leq g_1$ on $I \times I$.
	\end{theorem}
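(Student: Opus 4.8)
The plan is to derive a resolvent-type identity linking $g_1$ and $g_2$ and then to read off the inequality from a sign analysis of its integral kernel. The starting observation is that the two operators differ only in their zeroth-order coefficient, namely $L_n[M_1] = L_n[M_2] + (M_1 - M_2)\,\mathrm{Id}$, so that the two Green's functions are tied together through the equation rather than through their (identical) boundary and jump structure.

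First I would fix $s \in (a,b)$ and set $w(t) = g_2(t,s) - g_1(t,s)$. Away from $t = s$ each $g_j(\cdot,s)$ solves the corresponding homogeneous equation, whence $L_n[M_1]\,g_1(\cdot,s) = 0$ and $L_n[M_1]\,g_2(\cdot,s) = \left(L_n[M_2] + (M_1-M_2)\,\mathrm{Id}\right) g_2(\cdot,s) = (M_1 - M_2)\,g_2(\cdot,s)$ for $t \neq s$. The decisive regularity point is that $g_1(\cdot,s)$ and $g_2(\cdot,s)$ carry exactly the same singularity at $t = s$: both are of class $C^{n-2}$ and exhibit the same unit jump in the $(n-1)$-th derivative dictated by condition $\mathrm{(G5)}$ (the leading coefficient being $1$ for both operators). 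Consequently the jumps cancel, $w \in C^{n-1}(I)$, and since $L_n[M_1]\,w(t) = (M_1 - M_2)\,g_2(t,s)$ has a continuous right-hand side, $w$ is in fact a classical $C^n$ solution on all of $I$. Moreover $w \in X_U$, because both $g_1(\cdot,s)$ and $g_2(\cdot,s)$ satisfy the boundary conditions defining $X_U$.

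Since $L_n[M_1]$ is invertible on $X_U$ with Green's function $g_1$, representing the unique solution $w$ of $L_n[M_1]\,w = (M_1 - M_2)\,g_2(\cdot,s)$ yields the identity
\[ g_2(t,s) - g_1(t,s) = (M_1 - M_2) \int_a^b g_1(t,r)\,g_2(r,s)\,dr, \qquad (t,s) \in I \times I. \]
With this identity in hand the conclusion is immediate: since $M_1 < M_2$ the prefactor $M_1 - M_2$ is negative, while the hypothesis that $g_1$ and $g_2$ have the \emph{same} constant sign forces the product $g_1(t,r)\,g_2(r,s) \ge 0$ for every $r$, so the integral is nonnegative. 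Hence the right-hand side is $\le 0$ and $g_2 \le g_1$ on $I \times I$, whichever common sign the two functions share.

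I expect the main obstacle to be the regularity and representation step rather than the sign chase: one must argue carefully that the difference $w$ loses the singularity common to each Green's function and is therefore an admissible classical solution in $X_U$ to which the Green's representation of $g_1$ applies. The cancellation of the unit jumps coming from $\mathrm{(G5)}$ is exactly what makes this work, and it is worth checking explicitly that both operators share the same jump normalization. Once the resolvent identity is secured, the monotonicity in $M$ follows purely from the sign of $M_1 - M_2$ together with the constant-sign hypothesis.
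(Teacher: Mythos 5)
Your proof is correct. The paper does not prove Theorem \ref{T::d1} itself---it quotes it from \cite[Sections 1.6 and 1.8]{Cab}---and the argument given there is essentially the one you reconstruct: since both operators have leading coefficient $1$, the unit jumps in the $(n-1)$-st derivative cancel, so for fixed $s$ the difference $w=g_2(\cdot,s)-g_1(\cdot,s)$ is a classical solution in $X_U$ of $L_n[M_1]\,w=(M_1-M_2)\,g_2(\cdot,s)$, the invertibility of $L_n[M_1]$ then gives the resolvent identity $g_2(t,s)-g_1(t,s)=(M_1-M_2)\int_a^b g_1(t,r)\,g_2(r,s)\,dr$, and the common constant sign of $g_1,g_2$ together with $M_1<M_2$ yields $g_2\leq g_1$.
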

	
	\begin{theorem}\label{T::int}
		Let $M_1<\bar{M}<M_2$ be three real constants. Suppose that operator $L_n[M]$ is invertible in $X_U$ for $M=M_j$, $j=1,2$ and that the corresponding Green's function satisfies $g_2\leq g_1\leq 0$ (resp. $0\leq g_2\leq g_1$) on $I\times I$. Then the operator $L_n[\bar{M}]$ is invertible in $X_U$ and the related Green's function $\bar{g}$ satisfies $g_2\leq \bar{g}\leq g_1\leq 0$ ($0\leq g_2\leq \bar{g}\leq g_1$) on $I\times I$.
	\end{theorem}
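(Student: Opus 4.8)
The plan is to separate the two assertions. The ordering $g_2\le\bar g\le g_1\le 0$ is essentially free once we know the middle fact, so I would aim the whole argument at proving that $L_n[\bar M]$ is invertible in $X_U$ and that its Green's function $\bar g$ is of the \emph{same} constant sign, say $\bar g\le 0$. Granting this, Theorem \ref{T::d1} applied to the pair $(M_1,\bar M)$ (with $M_1<\bar M$ and $g_1,\bar g\le 0$) gives $\bar g\le g_1$, and applied to the pair $(\bar M,M_2)$ (with $\bar M<M_2$ and $\bar g,g_2\le 0$) gives $g_2\le\bar g$; hence $g_2\le\bar g\le g_1\le 0$. The positive case $0\le g_2\le g_1$ is identical after reversing every inequality. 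So the heart of the matter is the invertibility of $L_n[\bar M]$ together with $\bar g\le 0$.

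To follow the Green's function along the family I would record the differentiation formula. Writing $G_M$ for the integral operator with kernel $g_M$, one has $G_M=(L_n[0]+M)^{-1}$, whence $\partial_M G_M=-G_M^2$, i.e.
\[
\frac{\partial}{\partial M}\,g_M(t,s)=-\int_a^b g_M(t,r)\,g_M(r,s)\,dr,
\]
valid at every $M$ for which $L_n[M]$ is invertible. Where $g_M\le 0$ the integrand is nonnegative, so $g_M$ is nonincreasing in $M$; moreover, because the equation is disconjugate, for fixed $t$ the section $g_M(t,\cdot)$ (through the adjoint relation \eqref{Ec::gg}) and for fixed $s$ the section $g_M(\cdot,s)$ vanish only on finite sets, so the integral is strictly positive at interior points and the monotonicity is strict there. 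This strictness is the engine that keeps the sign from being lost as $M$ moves.

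The key mechanism I would exploit is an asymmetry in how the nonpositive region may abut a singular value (a pole of the resolvent). Suppose, for contradiction, that some $M$ in $(M_1,M_2)$ is singular; since the singular values are the isolated points where $\tfrac1{M-M_1}$ is an eigenvalue of the compact operator $-G_{M_1}$, only finitely many lie in $(M_1,M_2)$, so there is a largest one, $P$. On $(P,M_2]$ the operator is invertible and, starting from $g_{M_2}\le 0$ and using the strict monotonicity above, one propagates $g_M\le 0$ throughout $(P,M_2]$. But then $g_M$ is nonincreasing in $M$ and $\le 0$, so as $M\downarrow P$ the values $g_M(t,s)$ increase to a finite limit $\le 0$ for each $(t,s)$; this contradicts that $P$ is a pole, at which $g_M$ must blow up (its rank-one residue $\propto\phi(t)\psi(s)$ being nonzero). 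Hence no singular value lies in $(M_1,M_2)$, $L_n[M]$ is invertible on all of $[M_1,M_2]$, and the same monotonicity argument started from $g_{M_1}\le 0$ gives $g_M\le 0$ there; in particular $\bar g\le 0$. Theorem \ref{T::in1} then re-reads this as the inverse-negative character of $L_n[\bar M]$, and the first paragraph concludes.

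The step I expect to be the main obstacle is making rigorous the claim that the sign is \emph{propagated}, i.e. that the set $\{M:L_n[M]\ \text{invertible and}\ g_M\le 0\}$ is relatively open, so that $M_1$ and $M_2$ lie in a single connected nonpositive component and no ``flip to positive and back'' can occur between them. The differentiation formula gives openness to the right cleanly, but at parameters where $g_M$ touches zero at an interior point the first-order term can vanish and a naive argument fails; here I would use the strict interior negativity of $\partial_M g_M$ (coming from the finiteness of the zero sets of the sections of $g_M$, hence from disconjugacy) together with the ordering hypothesis $g_2\le g_1$ to exclude a spurious reappearance of the sign. Equivalently, one identifies the first singular value above $M_1$ with the principal eigenvalue of $-G_{M_1}$ (a simple, strictly dominant eigenvalue with a positive eigenfunction, by Krein--Rutman once $g_1\le 0$ is upgraded to $g_1<0$ on the interior), beyond which the sign is genuinely lost; the assumption $g_2\le 0$ then forces $M_2$ to precede it. This spectral description is precisely the ``first eigenvalue'' structure announced in the abstract, and it is the delicate point on which the connectedness, and hence the whole lemma, rests.
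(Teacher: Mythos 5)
Your reduction via Theorem~\ref{T::d1}, the differentiation formula $\partial_M g_M=-\int_a^b g_M(t,r)g_M(r,s)\,dr$, and the finiteness of singular values in $(M_1,M_2)$ are all sound, but the central step of your contradiction argument is not: the claim that nonpositivity propagates \emph{downward} from $g_{M_2}\le 0$ across the pole-free interval $(P,M_2]$. Where $g_M\le 0$ the formula gives $\partial_M g_M\le 0$, so decreasing $M$ \emph{increases} $g_M$ pointwise; monotonicity works against you in that direction, and nothing prevents $g_M$ from crossing zero as $M$ decreases. In fact the inference you need (pole-free interval plus nonpositivity at its right endpoint implies nonpositivity throughout) is false, and the paper's own model case refutes it: for $u^{(4)}$ on $[0,1]$ one has $g_M\le 0$ exactly for $M\in[-\lambda_3(0),-\pi^4)$, while the next pole below $-\pi^4$ is at $-(2\pi)^4$; taking $M_2=-\lambda_3(0)$, the interval $\bigl(-(2\pi)^4,M_2\bigr]$ contains no pole and $g_{M_2}\le 0$, yet $g_M$ changes sign for every $M<-\lambda_3(0)$ in it. The tell-tale sign is that your propagation step never uses the hypothesis at $M_1$, so it cannot be a correct inference about $(P,M_2]$ alone; and your suggested repairs do not close this hole (strict negativity of $\partial_M g_M$ makes the downward direction \emph{worse}, not better, and the Krein--Rutman remark asserts precisely the point that needs proof, namely why $M_2$ must precede the first singular value above $M_1$). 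A lesser issue: your appeal to disconjugacy to get strictness imports a hypothesis that Theorem~\ref{T::int} does not make; the paper itself gives no proof of this statement, quoting it from \cite[Sections 1.6 and 1.8]{Cab}, so correctness is the only benchmark here.

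The argument is repairable by exchanging the roles of the two endpoints. Propagate \emph{upward} from $M_1$: let $S=\{M\in[M_1,M_2]:\ L_n[M']\text{ invertible and } g_{M'}\le 0 \text{ for all } M'\in[M_1,M]\}$. If $M\in S$ and $M'>M$ is close to $M$, the Neumann series $G_{M'}=\sum_{k\ge 0}(M-M')^k G_M^{k+1}$ converges and every term has nonpositive integral kernel (the kernel of $G_M^{k+1}$ has sign $(-1)^{k+1}$ and $(M-M')^k=(-1)^k(M'-M)^k$), so $g_{M'}\le g_M\le 0$ and $S$ is relatively open to the right. The hypothesis at $M_2$ enters not through the bound by zero but through Theorem~\ref{T::d1}: for each $M\in S$ the pair $(M,M_2)$ consists of invertible operators with nonpositive Green's functions, hence $g_2\le g_M\le 0$. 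This two-sided bound makes the kernels, and therefore the operator norms $\|G_M\|$, uniformly bounded on $S$; since at any singular value $P$ one has $\|G_M\|\ge 1/|M-P|$ (apply $G_M$ to an eigenfunction of $L_n[P]$), no singular value can be a limit point of $S$, so $S$ is also closed, whence $S=[M_1,M_2]$. This yields invertibility and $\bar g\le 0$, after which your first paragraph finishes the ordering exactly as written.
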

	
		We introduce a definition to our particular problem \eqref{Ec::T4} in the space $X$.
		\begin{definition}
			Operator $T[M]$ is said to be strongly inverse positive (strongly inverse negative) in $X$, if every function $u\in X$ such that $T[M]\,u\gneqq0$ in $I$, satisfies $u>0$ ($u<0$) on $(a,b)$ and, moreover $u'(a)>0$ and $u'(b)<0$ ($u'(a)<0$ and $u'(b)>0$).
		\end{definition}
		
	Next result shows a relationship between the Green's function's sign and the previous definition. The proof is an adaption to this situation of \cite[Corollaries 1.6.6 and 1.6.12]{Cab}
		
		\begin{theorem}\label{T::14}
			 Green's function related to  operator $T[M]$ in $X$ is positive (negative) a.e on $(a,b)\times (a,b)$ and, moreover, $\frac{\partial }{\partial t}g_M(t,s)_{\mid t=a}>0$ and $\frac{\partial }{\partial t}g_M(t,s)_{\mid t=b}<0$ ($\frac{\partial }{\partial t}g_M(t,s)_{\mid t=a}<0$ and $\frac{\partial }{\partial t}g_M(t,s)_{\mid t=b}>0$) a.e. on $(a,b)$, if, and only if,   operator $T[M]$  is strongly inverse positive (strongly inverse negative) in $X$.
		\end{theorem}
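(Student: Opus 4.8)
The plan is to prove the equivalence through the integral representation of the inverse of $T[M]$. Assume throughout that $T[M]$ is invertible in $X$ (so that $g_M$ is well defined), and recall that for every $h\in C(I)$ the unique solution $u\in X$ of $T[M]\,u=h$ is
\[
u(t)=\int_a^b g_M(t,s)\,h(s)\,ds,\qquad t\in I.
\]
Since, as recalled after \eqref{Ec:MG1}, $g_M$ is of class $C^2$ on $I\times I$ and $C^4$ on each closed triangle, differentiation under the integral sign is legitimate and gives
\[
u'(t)=\int_a^b \frac{\partial}{\partial t}g_M(t,s)\,h(s)\,ds,\qquad t\in I,
\]
so that $u'(a)$ and $u'(b)$ are obtained by integrating the continuous traces $\frac{\partial}{\partial t}g_M(t,s)_{\mid t=a}$ and $\frac{\partial}{\partial t}g_M(t,s)_{\mid t=b}$ against $h$. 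I treat the strongly inverse positive case; the negative one follows verbatim after replacing $h$ by $-h$.

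The implication from the sign conditions to strong inverse positivity is then immediate. Let $u\in X$ satisfy $h:=T[M]\,u\gneqq 0$; then $h$ is continuous, nonnegative and not identically zero, hence strictly positive on some subinterval of $(a,b)$. The hypothesis $g_M>0$ a.e., combined with the joint continuity of $g_M$ and the fact that each slice $g_M(t,\cdot)$ solves a linear equation off $s=t$ (so it is positive off a finite set for \emph{every} $t$), makes the integrand of the first formula positive on a set of positive measure, whence $u(t)>0$ for each $t\in(a,b)$. Evaluating the second formula at the endpoints and invoking $\frac{\partial}{\partial t}g_M(t,s)_{\mid t=a}>0$ and $\frac{\partial}{\partial t}g_M(t,s)_{\mid t=b}<0$ a.e.\ on $(a,b)$ yields $u'(a)>0$ and $u'(b)<0$. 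This is exactly strong inverse positivity.

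For the converse, suppose $T[M]$ is strongly inverse positive and apply the definition to $u=\int_a^b g_M(\cdot,s)\,h(s)\,ds$ for an arbitrary $h\gneqq 0$ in $C(I)$: the three maps $h\mapsto \int_a^b g_M(t,s)h(s)\,ds$, $h\mapsto u'(a)$ and $h\mapsto -u'(b)$ are strictly positive on the cone of nonnegative, nonzero continuous functions. Testing against continuous bumps concentrated near an arbitrary point, together with the continuity of $g_M$ and of its endpoint traces, first delivers the non-strict inequalities $g_M\geq 0$, $\frac{\partial}{\partial t}g_M(t,s)_{\mid t=a}\geq 0$ and $\frac{\partial}{\partial t}g_M(t,s)_{\mid t=b}\leq 0$ (the bound $g_M\geq 0$ also follows from Theorem \ref{T::in1}, since strong inverse positivity implies inverse positivity).

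The hard part, and the step I expect to require the most care, is promoting these non-strict inequalities to the strict, almost-everywhere statements of the theorem, because a test against a single bump or a limiting argument only produces ``$\ge$''. Here I would exploit the differential structure off the diagonal. Fix $t\in(a,b)$: by \eqref{Ec::gg} the slice $g_M(t,\cdot)=g^*_M(\cdot,t)$ solves the homogeneous adjoint equation in $s$ on $(a,t)$ and on $(t,b)$, so its zeros are isolated; a zero set of positive measure would force $g_M(t,\cdot)\equiv 0$ on one of these subintervals, which is impossible since a nonnegative bump supported there would give $u(t)=0$, contradicting strong inverse positivity. Hence $g_M(t,\cdot)>0$ off a finite set for every $t$, and Fubini gives $g_M>0$ a.e.\ on $(a,b)\times(a,b)$. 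The endpoint traces are handled identically: letting $t\to a^+$ inside the triangle $a\le t<s$, on which $\frac{\partial}{\partial t}$ commutes with the $s$-adjoint operator, shows that $\frac{\partial}{\partial t}g_M(t,s)_{\mid t=a}$ is itself a solution of the adjoint equation in $s$ on $(a,b)$; it is nonnegative and nontrivial (otherwise $u'(a)=0$ for every admissible $h$), hence positive off a finite set, and the symmetric argument at $t=b$ finishes the proof. The two points demanding genuine justification are thus these strictness upgrades and the verification that the relevant endpoint traces indeed satisfy the adjoint equation.
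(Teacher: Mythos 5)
Your ``only if'' direction (strong inverse positivity implies the a.e.\ sign conditions) is essentially sound: the bump-function argument correctly excludes a slice $g_M(t,\cdot)$ vanishing identically on a subinterval, and combined with the fact that each slice solves the adjoint equation on $(a,t)$ and $(t,b)$, and with the analogous argument for the endpoint traces, it yields the strict a.e.\ inequalities. (The paper itself offers no proof to compare against; it only cites \cite[Corollaries 1.6.6 and 1.6.12]{Cab}.) The genuine gap is in the ``if'' direction, at the parenthetical claim that each slice $g_M(t,\cdot)$ ``is positive off a finite set for \emph{every} $t$''. What your hypotheses actually give is: $g_M\geq 0$ everywhere (continuity plus a.e.\ positivity), and, since $g_M(t,\cdot)$ solves the fourth-order adjoint equation on each component of $I\setminus\{t\}$, on each component the slice either has finitely many zeros or vanishes identically. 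Nothing you invoke excludes the second alternative: a continuous function can be positive a.e.\ on the square and still vanish on a segment $\{t_0\}\times(t_0,b)$, which has measure zero, and the jump condition is compatible with this as well (on the other component the slice then behaves like $(t_0-s)^3/6>0$ near $s=t_0$). The exclusion argument you do supply --- a bump supported in the degenerate subinterval would force $u(t_0)=0$, ``contradicting strong inverse positivity'' --- is circular in this direction, because strong inverse positivity is the conclusion being proved, not a hypothesis.

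This is not a removable technicality but the crux of the implication: if some slice $g_M(t_0,\cdot)$ did vanish identically on $(t_0,b)$, then for $h\gneqq 0$ supported there the solution satisfies $u(t_0)=\int_{t_0}^b g_M(t_0,s)\,h(s)\,ds=0$, so $T[M]$ genuinely fails to be strongly inverse positive even though the stated a.e.\ hypotheses are not obviously violated. Hence the proof must show that the a.e.\ sign conditions are \emph{incompatible} with such a degenerate slice, and that requires a global argument absent from your write-up. To indicate what is needed: for $t<s$ one has $g_M(t,s)=\kappa_1(s)\,\phi_1(t)+\kappa_2(s)\,\phi_2(t)$, where $\phi_1,\phi_2$ span the solutions of $T[M]w=0$ with $w(a)=w''(a)=0$, and $\kappa_1(s)=\frac{\partial }{\partial t}g_M(t,s)_{\mid t=a}$, $\kappa_2(s)=\frac{\partial^3 }{\partial t^3}g_M(t,s)_{\mid t=a}$ both solve the adjoint equation in $s$. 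A degenerate slice at $t_0$ forces the fixed linear relation $\phi_1(t_0)\,\kappa_1+\phi_2(t_0)\,\kappa_2\equiv 0$ on $(t_0,b)$, hence on all of $(a,b)$, hence $\phi_1(t_0)\,u'(a)+\phi_2(t_0)\,u'''(a)=0$ for \emph{every} $u\in X$ (by invertibility every $u\in X$ is a solution for some $h$); this is contradicted by testing $u(t)=\sin\bigl(k\pi(t-a)/(b-a)\bigr)\in X$ for $k=1,2$ --- unless $\phi_1(t_0)=\phi_2(t_0)=0$, a further degenerate case that must be handled separately. None of this appears in your proposal, so the ``if'' direction is incomplete as written.
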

		
	The following conditions on $g_M(t,s)$ have been introduced in \cite[Section 1.8]{Cab} and they  will be used along the paper.
	\begin{itemize}
		\item[$(P_g$)] Suppose that there is a continuous function $\phi(t)>0$ for all $t\in (a,b)$ and $k_1,\ k_2\in \mathcal{L}^1(I)$, such that $0<k_1(s)<k_2(s)$ for a.e. $s\in I$, satisfying
		\[\phi(t)\,k_1(s)\leq g_M(t,s)\leq \phi(t)\, k_2(s)\,,\quad \text{for a.e. } (t,s)\in I \times I .\]
		\item[$(N_g$)] Suppose that there is a continuous function $\phi(t)>0$ for all $t\in (a,b)$ and $k_1,\ k_2\in \mathcal{L}^1(I)$, such that $k_1(s)<k_2(s)<0$ for a.e. $s\in I$, satisfying
		\[\phi(t)\,k_1(s)\leq g_M(t,s)\leq \phi(t)\, k_2(s)\,,\quad \text{for a.e. } (t,s)\in I \times I .\]
	
	\end{itemize}
	
	Finally, we introduce the following sets, which show the parameter's set where the Green's function is of constant sign,
	\begin{eqnarray}
	\nonumber	P_T&=& \left\lbrace M\in \mathbb{R}\ \mid \quad g_M(t,s)\geq 0\quad \forall (t,s)\in I\times I\right\rbrace, \\
\nonumber		N_T&=& \left\lbrace M\in \mathbb{R}\ \mid \quad g_M(t,s)\leq 0\quad \forall (t,s)\in I\times I\right\rbrace.
	\end{eqnarray}

	Using  Theorem \ref{T::int} we know that these two sets are real intervals. We mention that they are not necessarily bounded or nonempty.

	Next results describe the structure of the two previous real intervals,
	\begin{theorem}\emph{\cite[Theorem 1.8.31]{Cab}} \label{T::6}
		Let $\bar{M}\in \mathbb{R}$ be fixed. If operator $L_n[\bar{M}]$ is invertible in $X_U$ and its related Green's function satisfies condition $(P_g)$, then the following statements hold:
		\begin{itemize}
			
			\item There exists $\lambda_1>0$, the least eigenvalue in absolute value of operator $L_n[\bar{M}]$ in $X_U$. Moreover, there exists a nontrivial constant sign eigenfunction corresponding to the eigenvalue $\lambda_1$.
			\item Green's function related to operator $L_n[\bar{M}]$ is nonnegative on $I\times I$ for all $M\in(\bar{M}-\lambda_1,\bar{M}]$.
			
			\item Green's function related to operator $L_n[\bar{M}]$ cannot be nonnegative on $I\times I$ for all $M<\bar{M}-\lambda_1$.
			\item If there is $M\in \mathbb{R}$ for which Green's function related to operator $L_n[\bar{M}]$ is nonpositive on $I\times I$, then $M<\bar{M}-\lambda_1$.
		\end{itemize}
	\end{theorem}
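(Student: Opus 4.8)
The plan is to realise $L_n[\bar M]^{-1}$ as the compact integral operator
\[
\mathcal{G}\,u(t)=\int_a^b g_{\bar M}(t,s)\,u(s)\,ds ,\qquad u\in C(I),
\]
and to extract all four conclusions from the positivity hypothesis $(P_g)$. First I would note that $(P_g)$ makes $\mathcal{G}$ a $u_0$-positive operator with $u_0=\phi$: for $0\lneqq u$ one has $\bigl(\int_a^b k_1(s)\,u(s)\,ds\bigr)\,\phi(t)\le\mathcal{G}u(t)\le\bigl(\int_a^b k_2(s)\,u(s)\,ds\bigr)\,\phi(t)$, so $\mathcal{G}u$ is squeezed between two strictly positive multiples of $\phi$; in particular $\mathcal{G}\phi\ge\beta\phi$ with $\beta=\int_a^b k_1\phi>0$, forcing $r(\mathcal{G})\ge\beta>0$. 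Compactness of $\mathcal{G}$ is standard from the regularity of $g_{\bar M}$ recorded after \eqref{Ec:MG1}. The Krasnoselskii theory of $u_0$-positive operators (equivalently Krein--Rutman) then gives that $r(\mathcal{G})>0$ is an eigenvalue of $\mathcal{G}$, that it strictly dominates the modulus of every other spectral value, and that its eigenfunction $\varphi$ may be taken with $\varphi>0$ on $(a,b)$. Since $\mathcal{G}u=\mu\,u\iff L_n[\bar M]\,u=\mu^{-1}u$, setting $\lambda_1:=1/r(\mathcal{G})$ turns $r(\mathcal{G})=\max|\mu|$ into $\lambda_1=\min|\lambda|$: this is the first assertion, and it also records $L_n[\bar M-\lambda_1]\,\varphi=0$ with $\varphi>0$.

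For the second assertion I would avoid any sign-tracking and use the resolvent series instead. Writing $M=\bar M-\lambda$ we have $L_n[M]=L_n[\bar M]-\lambda\,\mathrm{Id}$, so the associated Green operator is $\mathcal{G}_\lambda=(\mathrm{Id}-\lambda\,\mathcal{G})^{-1}\mathcal{G}=\sum_{k\ge0}\lambda^{k}\,\mathcal{G}^{\,k+1}$, whose kernel is the uniformly convergent sum of the iterated kernels of $g_{\bar M}$; convergence holds exactly when $\lambda\,r(\mathcal{G})<1$, i.e. for $\lambda\in[0,\lambda_1)$, i.e. $M\in(\bar M-\lambda_1,\bar M]$. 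Each $\mathcal{G}^{\,k+1}$ has for its kernel the $(k{+}1)$-fold convolution of $g_{\bar M}\ge0$, hence a nonnegative kernel; as every coefficient $\lambda^{k}$ is nonnegative on this range, the kernel $g_M$ of $\mathcal{G}_\lambda$ is nonnegative on $I\times I$. This yields $g_M\ge0$ for all $M\in(\bar M-\lambda_1,\bar M]$.

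The remaining two assertions I would obtain from the positive eigenfunction $\varphi$ together with the dictionary of Theorem \ref{T::in1}. For every $M$ one has $L_n[M]\,\varphi=L_n[\bar M-\lambda_1]\,\varphi+(M-\bar M+\lambda_1)\,\varphi=(M-\bar M+\lambda_1)\,\varphi$. If $M<\bar M-\lambda_1$ the scalar factor is negative, whence $L_n[M](-\varphi)\gneqq0$; were $g_M\ge0$, inverse positivity (Theorem \ref{T::in1}) would give $-\varphi\ge0$, contradicting $\varphi>0$ — this proves the third assertion. For the fourth, suppose $g_M\le0$. The range $(\bar M-\lambda_1,\bar M]$ is excluded by the second assertion (there $g_M\ge0$ and is nontrivial), the point $\bar M-\lambda_1$ is non-invertible, and if $M>\bar M$ then $L_n[M]\,\varphi=(M-\bar M+\lambda_1)\,\varphi\gneqq0$ would force $\varphi\le0$ by the inverse-negative half of Theorem \ref{T::in1}; hence necessarily $M<\bar M-\lambda_1$.

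The main obstacle is the first step. The translation from $\mathcal{G}$ back to $L_n[\bar M]$ and the Neumann/eigenfunction arguments are routine once the principal spectral data are in hand, but securing those data requires verifying that $\mathcal{G}$ is compact on the appropriate function space, that $(P_g)$ genuinely delivers $u_0$-positivity with $r(\mathcal{G})>0$, and above all that $r(\mathcal{G})$ is an honest eigenvalue which strictly dominates the rest of the spectrum in modulus — it is precisely this strict dominance that makes $\lambda_1$ \emph{the} eigenvalue of least absolute value (and not merely the least positive one) and supplies the constant-sign eigenfunction on which the last two assertions rest.
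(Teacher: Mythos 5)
This theorem is not proved in the paper at all: it is imported verbatim from \cite[Theorem 1.8.31]{Cab}, so there is no internal proof to compare against. Your argument is correct and is essentially the standard proof behind that reference: condition $(P_g)$ makes the Green operator $\mathcal{G}$ compact and $u_0$-positive with $u_0=\phi$, Krasnoselskii/Krein--Rutman theory for such operators gives the dominant eigenvalue $r(\mathcal{G})>0$, its strict modulus dominance and the constant-sign eigenfunction, and the translation $\lambda_1=1/r(\mathcal{G})$ together with the eigenfunction comparison and Theorem \ref{T::in1} yields all four assertions. One step should be tightened: the claim that the iterated-kernel series for $g_M$ converges uniformly for all $\lambda\in[0,\lambda_1)$ is not immediate from the crude bound on $\|\mathcal{G}^{k+1}\|$; it is cleaner (and sufficient) to note that the Neumann series $\sum_{k\ge0}\lambda^{k}\mathcal{G}^{k+1}$ converges in operator norm when $\lambda\,r(\mathcal{G})<1$ and maps nonnegative functions to nonnegative functions, so $L_n[M]$ is inverse positive, and then Theorem \ref{T::in1} --- which you already invoke as your dictionary --- gives $g_M\ge0$ without ever discussing pointwise convergence of kernels.
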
	
	
	\begin{theorem}\emph{\cite[Theorem 1.8.36]{Cab}}\label{T::7}
		Let $\bar{M}\in\mathbb{R}$ be such that $L_n[\bar{M}]$   is invertible on $X_U$ and the related Green's function $g_{\bar{M}}$ satisfies condition $(P_g)$. If the interval $N_T$ is nonempty then $\sup(N_T)=\inf (P_T)$.
	\end{theorem}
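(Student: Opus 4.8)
The plan is to pin both $\inf(P_T)$ and $\sup(N_T)$ to the single value $M^{\ast}:=\bar{M}-\lambda_1$, where $\lambda_1>0$ is the least eigenvalue in absolute value of $L_n[\bar{M}]$ furnished by Theorem \ref{T::6}. First I would read the structure of $P_T$ directly off that theorem: its second and third items give $(M^{\ast},\bar{M}]\subseteq P_T$ and $P_T\cap(-\infty,M^{\ast})=\emptyset$, so since $P_T$ is an interval (Theorem \ref{T::int}) we get $\inf(P_T)=M^{\ast}$. The fourth item of Theorem \ref{T::6} says that any $M$ with $g_M\le 0$ satisfies $M<M^{\ast}$, hence $N_T\subseteq(-\infty,M^{\ast})$ and consequently $\sup(N_T)\le M^{\ast}=\inf(P_T)$. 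Thus the entire content of the statement is the reverse inequality $\sup(N_T)\ge M^{\ast}$, and it is precisely here that the hypothesis $N_T\neq\emptyset$ will be used.

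For the reverse inequality I would fix some $M_0\in N_T$, so that $g_{M_0}\le 0$ on $I\times I$, $L_n[M_0]$ is invertible and $M_0<M^{\ast}$, and then show that nonpositivity is preserved as $M$ increases, i.e. $[M_0,M^{\ast})\subseteq N_T$. The key analytic tool is the differentiation of the resolvent: writing $L_n[M]=L_n[0]+M\,\mathrm{Id}$ one has $\frac{d}{dM}L_n[M]^{-1}=-\bigl(L_n[M]^{-1}\bigr)^2$, which in terms of kernels reads
\[
\frac{\partial}{\partial M}\,g_M(t,s)=-\int_a^b g_M(t,r)\,g_M(r,s)\,dr,
\]
valid for every $M$ at which $L_n[M]$ is invertible. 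This is just the infinitesimal form of the monotonicity in Theorem \ref{T::d1}.

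Then I would argue by continuation. Set $f(M):=\max_{(t,s)\in I\times I}g_M(t,s)$, so that $M\in N_T$ if and only if $f(M)\le 0$; the set $\{M:f(M)\le 0\}$ is closed by continuity of $M\mapsto g_M$. To see it is open to the right at any $M<M^{\ast}$ with $f(M)=0$, note that at a maximizing pair $(t^{\ast},s^{\ast})$ we have $g_M(t^{\ast},s^{\ast})=0$ while $g_M\le 0$ elsewhere, so the integrand $g_M(t^{\ast},r)\,g_M(r,s^{\ast})\ge 0$ and therefore $\frac{\partial}{\partial M}g_M(t^{\ast},s^{\ast})\le 0$; by a standard envelope (Danskin-type) argument the right derivative of $f$ at such an $M$ is $\le 0$, so $f$ cannot become positive there. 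Taking $\beta:=\sup\{M:[M_0,M]\subseteq N_T\}$, closedness yields $[M_0,\beta]\subseteq N_T$, while right-openness forbids $\beta<M^{\ast}$; hence $\beta=M^{\ast}$, giving $[M_0,M^{\ast})\subseteq N_T$ and therefore $\sup(N_T)=M^{\ast}=\inf(P_T)$.

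The hard part will be to guarantee that the continuation runs exactly up to $M^{\ast}$ and not up to some smaller value at which $M\mapsto L_n[M]$ first fails to be invertible. I would handle this by observing that $g_M$ can cease to be nonpositive only at such a singular value, and that near it the sign of $g_M$ is governed by the residue $\varphi(t)\,\varphi^{\ast}(s)$ of the simple eigenvalue crossing zero (with $\varphi^{\ast}$ the adjoint eigenfunction, available through Theorem \ref{T::ad} and the relation $g_M^{\ast}(t,s)=g_M(s,t)$). A nonpositive $g_M$ just below the singularity forces this residue to keep a constant sign, i.e. the crossing eigenfunction $\varphi$ to be of constant sign; by Theorem \ref{T::6} the unique value below $\bar{M}$ carrying a constant-sign eigenfunction is the principal one, $M^{\ast}=\bar{M}-\lambda_1$. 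This pins the terminal value of the continuation to $M^{\ast}$ and closes the argument.
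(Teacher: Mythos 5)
The paper does not actually prove this statement --- it is quoted verbatim from \cite[Theorem 1.8.36]{Cab} --- so your attempt has to be judged on its own merits. Your reduction is the right one: Theorem \ref{T::6} plus the interval structure of $P_T$ gives $\inf(P_T)=\bar{M}-\lambda_1=:M^*$ and $\sup(N_T)\le M^*$, so everything rests on proving $[M_0,M^*)\subseteq N_T$ for a given $M_0\in N_T$. The genuine gap is in the continuation step, i.e.\ in the claim that nonpositivity can only be lost at a parameter where $L_n[M]$ fails to be invertible. Your envelope argument cannot establish this, because the function $f(M)=\max_{(t,s)\in I\times I}g_M(t,s)$ is identically zero on $N_T$: the Green's function vanishes on part of the boundary of the square for trivial reasons (here, and in every space $X_U$ used in this paper, the boundary conditions include $u(a)=u(b)=0$, so $g_M(a,s)=g_M(b,s)=0$ for all $s$), and a nonpositive kernel therefore attains its maximum $0$ exactly at those edge points. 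At such a maximizer your derivative formula gives $\partial_M g_M(a,s)=-\int_a^b g_M(a,r)\,g_M(r,s)\,dr=0$, not $<0$; Danskin then yields right derivative $D^+f=0$, and a vanishing right derivative at the zero level does not prevent $f$ from becoming positive immediately to the right (think of $f(M)=(M-\beta)^2$ near $\beta$). This is not a technicality: as Steps 1--3 and 5 of the proof of Theorem \ref{T::1} show, loss of constant sign of $g_M$ originates precisely on the boundary of $I\times I$, through quantities like $\partial_s g_M(t,s)|_{s=a}$, which are invisible to first-order information at maximizers where $g_M$ itself is pinned at $0$. So ``right-openness'', the heart of your plan, is asserted rather than proved.

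The final residue step has secondary gaps as well: the crossing eigenvalue need not be simple, and Theorem \ref{T::6} does not assert that $M^*$ is the \emph{unique} parameter below $\bar{M}$ whose kernel contains a constant-sign function --- that is the uniqueness half of Krein--Rutman and would itself need proof. A correct route that stays close to your resolvent viewpoint replaces the max-function by kernel positivity: let $K_0$ be the integral operator with kernel $g_{M_0}\le 0$; the Neumann series $L_n[M]^{-1}=(I+(M-M_0)K_0)^{-1}K_0=-\sum_{j\ge 0}(M-M_0)^j(-K_0)^{j+1}$ converges whenever $M-M_0<1/r$, where $r$ is the spectral radius of $-K_0$, and every term has nonpositive kernel; hence $L_n[M]$ stays invertible and $g_M\le 0$ on $[M_0,M_0+1/r)$, with no envelope argument at all. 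Moreover $r>0$ (otherwise $N_T\supseteq[M_0,\infty)$, contradicting $\sup N_T\le M^*$), Krein--Rutman provides $\varphi\gneqq 0$ with $L_n[M_0+1/r]\varphi=0$, and pairing $\varphi$ against the nonnegative adjoint eigenfunction $\psi^*$ at $M^*$ (available because $g^*_{\bar{M}}(t,s)=g_{\bar{M}}(s,t)$ inherits a transposed $(P_g)$) gives $0=(M^*-M_0-1/r)\int_a^b\varphi\,\psi^*$ with $\int_a^b\varphi\,\psi^*>0$, forcing $M_0+1/r=M^*$. That is where the hypothesis $N_T\neq\emptyset$ genuinely enters, and it closes exactly the step your proposal leaves open.
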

	
	Next result, which appears in \cite{CaSaa}, gives us a property of the operator under the disconjugacy hypothesis.
	
	\begin{lemma}\label{L::2}
		Let $\bar{M}\in\mathbb{R}$ be such that $L_n[\bar{M}]\,u(t)=0$ is disconjugate on $I$. Then the following properties are fulfilled:
		\begin{itemize}
			\item If $n-k$ is even, then $L_n[\bar{M}]$ is a inverse positive operator on $X_k$ and its related Green's function, $g_{\bar{M}}(t,s)$, satisfies ($P_g$).
			
			\item If $n-k$ is odd, then $L_n[\bar{M}]$ is a inverse negative operator on $X_k$  and its related Green's function satisfies ($N_g$).
			
		\end{itemize}
	\end{lemma}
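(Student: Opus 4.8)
The plan is to reduce the whole statement to the Polya--Mammana factorization of a disconjugate operator. Since $L_n[\bar M]\,u=0$ is disconjugate, Theorem~\ref{T::22} provides positive functions $v_1,\dots,v_n$ realizing \eqref{e-descomp}. I would introduce the associated quasi-derivatives $D_0u=u/v_1$ and $D_ju=\tfrac1{v_{j+1}}(D_{j-1}u)'$ for $j\ge 1$, and observe that each $D_ju$ is a linear combination of $u,u',\dots,u^{(j)}$ whose leading coefficient is strictly positive. Hence the passage from $(u(a),\dots,u^{(k-1)}(a))$ to $(D_0u(a),\dots,D_{k-1}u(a))$, and likewise at $b$, is triangular with positive diagonal, so $u\in X_k$ if and only if $D_0u(a)=\cdots=D_{k-1}u(a)=0$ and $D_0u(b)=\cdots=D_{n-k-1}u(b)=0$. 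Invertibility on $X_k$ is then immediate, since a nontrivial kernel element would have a zero of order $k$ at $a$ and of order $n-k$ at $b$, i.e. $n$ zeros counted with multiplicity, contradicting disconjugacy.

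To obtain the sign of $g_{\bar M}$ I would construct it by integrating the factored equation $v_1\cdots v_n\,(D_{n-1}u)'=\delta_s$ step by step: each of the $n$ successive integrations is carried out against one of the positive weights $v_j$, the constants being fixed by imposing the $k$ conditions at $a$ and the $n-k$ conditions at $b$. The only sign changes introduced along the way are those coming from whether a given primitive is anchored at $a$ or at $b$, and tracking them yields $(-1)^{n-k}\,g_{\bar M}(t,s)\ge 0$ on $I\times I$. By Theorem~\ref{T::in1} this means exactly that $L_n[\bar M]$ is inverse positive when $n-k$ is even and inverse negative when $n-k$ is odd, which is the first half of each bullet.

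For the quantitative conditions I treat the case $n-k$ even (so $g_{\bar M}\ge 0$ and we aim at $(P_g)$); the odd case follows by reversing the sign to get $(N_g)$. Using the Markov fundamental system furnished by Theorem~\ref{T::4}, I would write $g_{\bar M}(\cdot,s)$ as a ratio of Wronskian-type determinants formed from that system; the defining positivity of a Markov system makes these determinants strictly positive, which gives $g_{\bar M}(t,s)>0$ throughout $(a,b)\times(a,b)$ and shows that the order of vanishing of $g_{\bar M}(\cdot,s)$ is exactly $k$ at $a$ and exactly $n-k$ at $b$. Concretely, $g_{\bar M}(t,s)=c_a(s)(t-a)^k+o((t-a)^k)$ near $a$ and $g_{\bar M}(t,s)=c_b(s)(b-t)^{n-k}+o((b-t)^{n-k})$ near $b$, with $c_a(s),c_b(s)>0$ for $s\in(a,b)$; the regularity of $c_a,c_b$ in $s$ can be read off from the adjoint Green's function through \eqref{Ec::gg} and Theorem~\ref{T::ad}, since disconjugacy is inherited by the adjoint.

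Finally I would put $\phi(t)=(t-a)^k(b-t)^{n-k}$ and show that $h(t,s):=g_{\bar M}(t,s)/\phi(t)$ extends to a continuous and strictly positive function on $I\times(a,b)$, the only delicate points being $t=a,b$ (handled by the boundary expansions above) and the diagonal $t=s$, where $g_{\bar M}$ is only $C^{n-2}$ but remains positive. Setting $k_1(s)=\min_{t\in I}h(t,s)$ and $k_2(s)=\max_{t\in I}h(t,s)$ then gives $0<k_1(s)\le k_2(s)$, both integrable, and $\phi(t)\,k_1(s)\le g_{\bar M}(t,s)\le\phi(t)\,k_2(s)$, which is precisely $(P_g)$. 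I expect the genuine obstacle to be this last factorization of the two-sided bound: namely proving that $h$ stays bounded away from $0$ and from $\infty$ uniformly in $t$, equivalently that the leading boundary coefficients $c_a(s),c_b(s)$ never degenerate and that $g_{\bar M}$ has no interior zeros. This is exactly where the strict total positivity of the Markov system, rather than the mere qualitative sign obtained in the first two steps, has to be exploited.
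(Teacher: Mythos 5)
A preliminary remark on the comparison: the paper does not actually prove this lemma --- it imports it verbatim from \cite{CaSaa}, where the sign of the $(k,n-k)$ Green's function under disconjugacy ultimately rests on the total-positivity theory of disconjugate operators in \cite{Cop}. So your attempt can only be measured against that background and against the analogous fourth-order argument the paper gives for Lemma~\ref{L::pg}. Within your proposal, the first two steps (the quasi-derivative reformulation of $X_k$ via the factorization of Theorem~\ref{T::22}, and invertibility because a kernel element would have $k+(n-k)=n$ zeros counted with multiplicity) are correct, and your final reduction --- take $\phi(t)=(t-a)^k(b-t)^{n-k}$, extend $g_{\bar{M}}/\phi$ continuously, set $k_1(s)=\min_t$, $k_2(s)=\max_t$ --- is exactly the mechanism used in the paper's proof of Lemma~\ref{L::pg}.

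The genuine gap is in the middle, where the real content of the lemma lies. Your derivation of the sign $(-1)^{n-k}g_{\bar{M}}\geq 0$ by ``integrating the factored equation step by step, anchoring each primitive at $a$ or at $b$ and tracking signs'' does not work: with conditions split between the two endpoints, the $n$ integration constants are determined by a coupled linear system, not one endpoint condition per integration, so there is no well-defined anchoring and no sign bookkeeping to track (already for $u''$ with $u(a)=u(b)=0$ the sign of the Green's function is not obtained this way). Your fallback --- writing $g_{\bar{M}}(\cdot,s)$ as a ratio of determinants and invoking ``the defining positivity of a Markov system'' --- has the same defect: Markov positivity concerns Wronskians of initial segments of the system at a single point, whereas the determinants representing the Green's function involve confluent data at both endpoints (derivatives of orders $0,\dots,k-1$ at $a$ and $0,\dots,n-k-1$ at $b$) together with interior values. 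Positivity of such two-point confluent determinants is precisely the total-positivity theorem for extended Chebyshev systems that constitutes the core of the lemma; it is what you are supposed to prove, not a consequence of the definition. The same machinery is needed for your assertion that the leading boundary coefficients $c_a(s),c_b(s)$ are positive a.e. Consequently the main obstacle is not, as you claim at the end, the passage to the two-sided bound $(P_g)$ --- that part is routine once strict interior positivity and the exact vanishing orders are known --- but the establishment of those sign facts themselves, which your argument assumes rather than proves.
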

	
\section{Strongly inverse positive character of $T[0]$.}\label{Sc::des}

Under the assumption of disconjugation on $I$ of the second order linear differential operator $L_2$ defined in equation \eqref{Ec::2or}, we prove in this section the strongly inverse positive character of operator $T[0]$ in $X$.

Since the differential equation \eqref{Ec::2or} is disconjugate on $I$ we can apply  Theorems \ref{T::4} and \ref{T::22} to write such equation as \[L_2\, u(t) \equiv v_1(t)\,v_2(t)\,\dfrac{d}{dt}\left( \dfrac{1}{v_2(t)}\dfrac{d}{dt}\left( \dfrac{u(t)}{v_1(t)}\right) \right) =0\,,\ t\in I\,,\] where $v_2\in C^1(I)$ and $v_1\in C^2(I)$ are positive functions on $I$. 

Let us see that, in fact, $v_2\in C^3(I)$ and $v_1\in C^4(I)$. Indeed, following the proof of Theorem \ref{T::22},  given in \cite[Chapter 3]{Cop}, we can affirm that \[v_1(t)=y_1(t)\quad \text{and}\quad 
v_2(t)=\dfrac{\left| \begin{array}{cc} y_1(t)&y_2(t)\\y_1'(t)&y_2'(t)\end{array}\right| }{y_1^2(t)}\,,\] where $y_1$ and $y_2$ form a Markov fundamental system of solutions of \eqref{Ec::2or}. Since $p_1\in C^3(I)$ and $p_2\in C^2(I)$, every solution of \eqref{Ec::2or} is of class $C^4(I)$. Then $v_1\in C^4(I)$ and $v_2\in C^3(I)$.

So, trivially we can express $T[0]$ in the following way
\begin{equation}
T[0]\, u(t)\equiv v_1(t)\,v_2(t)\,\dfrac{d}{dt}\left( \dfrac{1}{v_2(t)}\dfrac{d}{dt}\left( \dfrac{u''(t)}{v_1(t)}\right) \right)\,,\quad t\in I\,.\label{e-de-T[0]}
\end{equation}

Realize that if we use the notation of Theorem \ref{T::22}, we have that $v_4(t)=v_2(t)$, $v_3(t)=v_1(t)$, $v_2(t)=1$ and $v_1(t)=1$. In order to avoid more complication with notation, we are going to keep up with the notation of $v_1(t)$ and $v_2(t)$.

In the sequel, we introduce a previous lemma which will allow us to obtain a characterization of the parameter's set where the Green's function, $g_M$, related to operator $T[M]$ in $X$ is of constant sign.

\begin{lemma}\label{L::pg}
	If the  second order linear differential equation \eqref{Ec::2or} is disconjugate on $I$, then operator $T[0]$ is strongly inverse positive on $X$.	

	Moreover, the related Green's function, $g_0(t,s)$, satisfies condition $(P_g)$.
	
\end{lemma}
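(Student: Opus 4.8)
The plan is to exploit the factorization \eqref{e-de-T[0]}, which exhibits $T[0]$ as the composition $T[0]\,u = L_2(u'')$ of the disconjugate second-order operator $L_2$ with the second-derivative operator $D\,u:=u''$. I would first introduce the auxiliary function $w=u''$, so that the boundary conditions \eqref{Ec::cf} split into the two Dirichlet conditions $u(a)=u(b)=0$ and $w(a)=w(b)=0$; the problem $T[0]\,u=f$ then decouples into solving $L_2\,w=f$ with $w(a)=w(b)=0$ and afterwards $u''=w$ with $u(a)=u(b)=0$. Both $L_2$ and $D$ are disconjugate second-order operators, each associated with the space $X_1$ (so $n=2$, $k=1$, $n-k=1$ odd), and each is invertible on $X_1$ since a nontrivial solution of the homogeneous equation has fewer than two zeros. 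Hence Lemma \ref{L::2} applies to both and tells us that each is inverse negative on $X_1$, with Green's function (call them $g_{L_2}$ and $G_0$, respectively) satisfying condition $(N_g)$; denote the associated data by $\psi>0$, $\tilde k_1<\tilde k_2<0$ for $g_{L_2}$ and by $\phi_0>0$, $\bar k_1<\bar k_2<0$ for $G_0$.

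For the strongly inverse positive character, suppose $u\in X$ satisfies $f:=T[0]\,u\gneqq 0$. Since $w=u''$ solves $L_2\,w=f$ with $w(a)=w(b)=0$, the representation $w(t)=\int_a^b g_{L_2}(t,s)\,f(s)\,ds$ together with the upper estimate in $(N_g)$ gives $w(t)\le \psi(t)\int_a^b \tilde k_2(s)\,f(s)\,ds<0$ for every $t\in(a,b)$, because $\psi>0$ on $(a,b)$, $\tilde k_2<0$ a.e.\ and $f\gneqq 0$. Thus $u''=w<0$ on $(a,b)$, i.e.\ $u$ is strictly concave; combined with $u(a)=u(b)=0$ this forces $u>0$ on $(a,b)$. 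Finally, since $u'$ is strictly decreasing and $\int_a^b u'(t)\,dt=u(b)-u(a)=0$, its maximum value $u'(a)$ exceeds the mean value $0$ and its minimum value $u'(b)$ lies below it, whence $u'(a)>0$ and $u'(b)<0$. This is precisely the strongly inverse positive character of $T[0]$ on $X$.

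For condition $(P_g)$, I would use that $T[0]^{-1}=D^{-1}\circ L_2^{-1}$ yields the composition formula $g_0(t,s)=\int_a^b G_0(t,r)\,g_{L_2}(r,s)\,dr$. Since $G_0$ and $g_{L_2}$ are both nonpositive and satisfy $(N_g)$, I would rewrite their separable bounds in terms of absolute values, namely $\phi_0(t)\,|\bar k_2(r)|\le |G_0(t,r)|\le \phi_0(t)\,|\bar k_1(r)|$ and $\psi(r)\,|\tilde k_2(s)|\le |g_{L_2}(r,s)|\le \psi(r)\,|\tilde k_1(s)|$, and multiply (the product of two negative factors being positive). Integrating in $r$ and setting $C_1=\int_a^b|\bar k_1(r)|\psi(r)\,dr$ and $C_2=\int_a^b|\bar k_2(r)|\psi(r)\,dr$, both finite and positive, produces $\phi_0(t)\,C_2\,|\tilde k_2(s)|\le g_0(t,s)\le \phi_0(t)\,C_1\,|\tilde k_1(s)|$. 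Taking $\phi=\phi_0$, $k_1=C_2\,|\tilde k_2|$ and, to secure the strict ordering $0<k_1<k_2$ demanded by $(P_g)$, $k_2=C_1\,|\tilde k_1|+k_1$ (which remains a valid upper bound), delivers the required estimate with $k_1,k_2\in\mathcal{L}^1(I)$.

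The step I expect to be the main obstacle is the derivation of $(P_g)$: one must track carefully the sign reversals when the two separable $(N_g)$ estimates are multiplied and integrated, confirm that the resulting $s$-factors are genuinely integrable, and arrange their strict ordering. By contrast, the strongly inverse positive statement follows quickly once the strict concavity of $u$ has been established from the inverse negative character of $L_2$ on $X_1$.
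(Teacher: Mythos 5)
Your proof is correct, but it takes a genuinely different route from the paper's, most visibly in the treatment of condition $(P_g)$. Both arguments rest on the same decomposition \eqref{e-de-T[0]}, i.e.\ $T[0]u=L_2(u'')$ with the boundary conditions of $X$ splitting into Dirichlet conditions for $u$ and for $w=u''$. For the strongly inverse positive part the paper does not invoke Lemma \ref{L::2}: it runs a direct zero-counting argument on the factorization of Theorem \ref{T::22} (the function $\frac{1}{v_2}\frac{d}{dt}\bigl(\frac{u''}{v_1}\bigr)$ is nondecreasing and can vanish at most once, whence $u''<0$ on $(a,b)$), whereas you reach the same inequality by applying Lemma \ref{L::2} (with $n=2$, $k=1$, $n-k$ odd) to the disconjugate operator $L_2$ on its Dirichlet space and using the upper $(N_g)$ estimate; from there the concavity argument coincides. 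The real divergence is in $(P_g)$: the paper first uses Theorem \ref{T::14} to get $g_0>0$ a.e.\ together with the signs of $\frac{\partial}{\partial t}g_0$ at $t=a$ and $t=b$, then proves $g_0>0$ on the open square by another zero-counting argument, and finally builds $\phi(t)=(t-a)(b-t)$ and $k_1,k_2$ as the minimum and maximum in $t$ of the continuous extension of $g_0(t,s)/[(t-a)(b-t)]$, using the existence and a.e.\ positivity of the boundary limits. You instead exploit $T[0]^{-1}=D^{-1}\circ L_2^{-1}$ to obtain the kernel composition formula $g_0(t,s)=\int_a^b G_0(t,r)\,g_{L_2}(r,s)\,dr$ and multiply the two $(N_g)$ estimates (two nonpositive kernels giving a nonnegative product); integrating in $r$ hands you separable two-sided bounds at once. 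Your route is more modular: it reuses Lemma \ref{L::2} as a black box, yields invertibility of $T[0]$ in $X$ for free, and never needs Theorem \ref{T::14} nor any analysis of the boundary behavior of $g_0$. What the paper's route buys is the explicit comparison function $\phi(t)=(t-a)(b-t)$, strict positivity of $g_0$ on $(a,b)\times(a,b)$, and the identification of $k_1,k_2$ with extremes tied to the normal derivatives of $g_0$ at the endpoints; moreover, the zero-counting technique rehearsed there is exactly the one reused throughout the proof of the main Theorem \ref{T::1}. Two points you should make explicit to close your version: the constants $C_1=\int_a^b|\bar k_1(r)|\,\psi(r)\,dr$ and $C_2=\int_a^b|\bar k_2(r)|\,\psi(r)\,dr$ are finite and strictly positive because the comparison function $\psi$ furnished by $(N_g)$ is continuous on $I$ (hence bounded) and $|\bar k_2|>0$ a.e.; and since the $(N_g)$ estimates hold only for a.e.\ $(t,r)$ and a.e.\ $(r,s)$, the bound obtained after integration is an a.e.\ statement in $(t,s)$ --- which is precisely what $(P_g)$ demands.
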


\begin{proof}
	Firstly, let us see the strongly inverse positive character of operator $T[0]$ on $X$.
	
	Let $u\in X$ be such that $T[0]\,u\gneqq0$ on $I$. Considering now the decomposition expression of operator $T[0]$ given in \eqref{e-de-T[0]}, since $v_1(t)\,v_2(t)>0$ for all $t \in I$, the fact that $T[0]\,u \ngeqq 0$ in $I$ allows us to affirm that $\dfrac{1}{v_2}\dfrac{d}{dt}\left( \dfrac{u''}{v_1}\right)$ is a nondecreasing function in $I$, which can vanish at most once on $(a,b)$.
	
	 Since $v_2(t)>0$ for all $t \in I$, we have that function $\dfrac{d}{dt}\left( \dfrac{u''}{v_1}\right) $ can also vanish at most once on $(a,b)$, being negative at $t=a$ and positive at $t=b$.
	
	So, $\dfrac{u''}{v_1}$ has at most two zeros on $I$. Since ${v_1}>0$, also does $u''$. But $u''(a)=u''(b)=0$, so $u''$ is of constant sign on $(a,b)$. Thus, in order to ensure the maximum number of zeros of $u''$, we have that $\dfrac{d}{dt}\left( \dfrac{u''(t)}{v_1(t)}\right) _{\mid t=a}<0$ and so, we can affirm that $u''<0$ on $(a,b)$.
	
	Hence, $u$ is a concave function on $I$, verifying $u(a)=u(b)=0$, so it must be positive on $(a,b)$. 
	
	If $u'(a)=0$ or $u'(b)=0$, we have that, since $u''<0$ on  $(a,b)$, $u'$ is of constant sign on $I$. Thus $u(a)=u(b)=0$ implies that $u\equiv 0$ on $I$. But, this contradicts the fact that $T[0]\,u\gneqq0$ on $I$. Then $u'(a)>0$ and $u'(b)<0$. Hence,  the strongly inverse positive character of operator $T[0]$ in $X$ is proved.
	
	Moreover, we conclude, by means of Theorem \ref{T::14}, that $g_0(t,s)>0$ a.e. on $I \times I$, and $\frac{\partial }{\partial t}g_0(t,s)_{\mid t=a}>0$ and $\frac{\partial }{\partial t}g_0(t,s)_{\mid t=b}<0$.
	
	Let us see that, in fact,  $g_0(t,s)>0$  on $I \times I$. 
	
	For a fixed $s\in(a,b)$, we denote $u_s(t)\equiv g_0(t,s)$. From the properties of  Green's function, we know that $u_s\geq0$ is a solution of problem
		$$T[0]\,u_s(t)=0, \; t \in I\backslash \{s\}, \quad u_s(a)=u''_s(a)=u_s(b)=u''_s(b)=0.$$
		
	We are going to see that $u_s$ cannot have any double zero on $(a,b)$.
		
		Since $T[0]\,u_s(t)=0$ if $t\neq s$, and $v_1\,v_2>0$ on $I$, $\dfrac{d}{dt}\left( \dfrac{u_s''}{v_1}\right)$ has two constant sign components, which, to allow the maximal oscillation, must be of different sign in each subinterval.

	Since $g_0$ is a $C^2$ function on $I \times I$, we have that $\dfrac{u_s''}{v_1}$  is a continuous function that has at most two zeros on $I$, verifying $u_s''(a)=u_s''(b)=0$. So, it is of constant sign in $(a,b)$. Now, from the positiveness of $v_1$, we can ensure that the same property holds for $u_s''$.
	
	 Hence, $u_s$ has at most two zeros on $I$. Since $u_s(a)=u_s(b)=0$, we deduce that $u_s>0$ on $(a,b)$. So 
	\begin{equation}
	\dfrac{g_0(t,s)}{(t-a)\,(b-t)}>0\,,\quad \forall(t,s)\in (a,b) \times (a,b)\,. \label{e-des-g0}
	\end{equation}
%
%
%
%
%
%
%
%
	 And, for $s\in(a,b)$ the following limits exist
	
		\begin{eqnarray}\nonumber 
		\lim_{t\rightarrow a^+} \dfrac{g_0 (t,s) }{(t-a)\,(b-t)}&=&\dfrac{\frac{\partial }{\partial t}g_0(t,s)_{\mid t=a}}{b-a}=\ell_1(s) ,\\\nonumber\\
		\nonumber \lim_{t\rightarrow b^-} \dfrac{g_0 (t,s) }{(t-a)\,(b-t)}&=&-\dfrac{\frac{\partial }{\partial t}g_0(t,s)_{\mid t=b}}{b-a}=\ell_2(s).
		\end{eqnarray}
	Moreover,  $\ell_1(s)>0$  and $\ell_2(s)>0$ for a.e. $s\in(a,b)$. So, for each $s\in(a,b)$, we construct the continuous extension of $
		\dfrac{g_0(t,s)}{(t-a)(b-t)}$ to $I$, thus
	\[0<k_1(s)=\min_{t\in I} \dfrac{g_0(t,s)}{(t-a)(b-t)}< \max_{t\in I} \dfrac{g_0(t,s)}{(t-a)\,(b-t)}=k_2(s)\,,\quad\text{for a.e. } s\in (a,b)\,. \]
		
		The continuity of functions $k_1$ and $k_2$ follows from the continuity of function $g_0$.
				
		To verify condition $(P_g)$ is enough to take $\phi(t)=(t-a)\,(b-t)$.	
\end{proof}

\section{Construction of the adjoint operator of $T[M]$.}\label{Sc::adj}

To prove the main result of this paper, we need to work with the adjoint operator of $T[M]$, which we denote  as $T^*[M]$.

As it is proved in \cite[Theorem 10, pag. 104]{Cop}, operator $T^*[0]$ has the correspondent decomposition 
\begin{equation}
\label{e-de-T*[0]}
T^*[0]\,v(t)\equiv \dfrac{d^2}{dt^2}\left( \dfrac{1}{v_1(t)}\dfrac{d}{dt}\left( \dfrac{1}{v_2(t)}\dfrac{d}{dt}\left( v_1(t)\,v_2(t)\,v(t)\right) \right) \right) \,,\quad t\in I\,,
\end{equation}
where $v_1$ and $v_2$ are previously introduced in equation \eqref{e-de-T[0]}. 

We note that the regularity of $v_1$ and $v_2$ is required in order to ensure the validity of previous expression.

At first, we are going to construct the space where this adjoint operator is defined. To this end, we use the characterization given in \cite[Section 1.4]{Cab} (see also \cite[page 74]{Cop}).

{\footnotesize \begin{eqnarray} \label{DA}
	X^*&=&\left\lbrace v\in C^4(I)\ \mid \sum_{j=1}^{4}\sum_{i=0}^{j-1} (-1)^{j-1-i} (p_{4-j}\,v)^{(j-1-i)}(b)\,u^{(i)}(b)\right. \\\nonumber
	&&=\left. \sum_{j=1}^{4}\sum_{i=0}^{j-1} (-1)^{j-1-i} (p_{4-j}\,v)^{(j-1-i)}(a)\,u^{(i)}(a) \ \text{ (with $p_0=1$ and $p_3=0$)}\,, \forall u\in X\right\rbrace \,.
	\end{eqnarray}}

Since  $u(a)=u(b)=u''(a)=u''(b)=0$ for every $u\in X$ , we can transform it as follows

{\footnotesize \begin{eqnarray} 
X^*&=&\left\lbrace v\in C^4(I)\ \mid \sum_{j=2}^{4} (-1)^{j-2} (p_{4-j}\,v)^{(j-2)}(b)\,u'(b)+v(b)\,u'''(b)\right. \\\nonumber
	&&=\left. \sum_{j=2}^{4} (-1)^{j-2} (p_{4-j}\,v)^{(j-2)}(a)\,u'(a) +v(a)\,u'''(a)\ \text{ (with $p_0=1$)}\,, \forall u\in X\right\rbrace \,.
	\end{eqnarray}}

If we choose $u\in X$, such that $u'(a)=u'(b)=u'''(b)=0$ and $u'''(a)=1$, we have that $v(a)=0$. So, if $v\in X^*$, then $v(a)=0$.

Analogously, taking $u\in X$, such that $u'(a)=u'(b)=u'''(a)=0$ and $u'''(b)=1$, we have that $v\in X^*$ must verify $v(b)=0$.

Now, if we choose $u\in X$ verifying $u'(a)=u'''(a)=u'''(b)=0$ and $u'(b)=1$, we have that every $u\in X^*$ should satisfy
\[p_2(b)\,v(b)-p_1'(b)\,v(b)-p_1(b)\,v'(b)+v''(b)=0\,,\]
which, since $v(b)=0$ and $p_1'$ and $p_2$ are continuous functions in $I$, is equivalent to
\[v''(b)-p_1(b)\,v'(b)=0\,.\]

 Analogously, with $u\in X$, such that $u'(b)=u'''(a)=u'''(b)=0$ and $u'(a)=1$, we have that every $v\in X^*$ satisfies $v''(a)-p_1(a)\,v'(a)=0$.
 
 So we conclude that the set of definition of the adjoint operator is
 \[X^*=\left\lbrace v\in C^4(I) \mid  v(a)=v''(a)-p_1(a)\,v'(a)=v(b)=v''(b)-p_1(b)\,v'(b)=0\right\rbrace \,.\]
 
 \begin{remark}\label{Rm::3.1}
	Under similar arguments, we can deduce that $X_k^*=X_{n-k}$ (see \cite{CaSaa} for details).
\end{remark}

In the sequel, we will prove that $\dfrac{d}{dt}\left( \dfrac{1}{v_2(t)}\dfrac{d}{dt}\left( v_1(t)\,v_2(t)\,v(t)\right) \right) $ vanish at $t=a$ and $t=b$ for every $v\in X^*$. 
 
 We write the previous expression in the following way
 \[\dfrac{d}{dt}\left( \dfrac{v_2'(t)\,v_1(t)}{v_2(t)}\right) v(t)+\dfrac{v_2'(t)\,v_1(t)}{v_2(t)}\,v'(t)+v_1''(t)v(t)+2v_1'(t)\,v'(t)+v_1(t)\,v''(t)\,,\]
 since $v(a)=v(b)=0$, $v_2\in C^3(I)$ and $v_1\in C^4(I)$ we have
 {\scriptsize \begin{eqnarray}
 \label{Ec::Ad1}\dfrac{d}{dt}\left( \dfrac{1}{v_2(t)}\dfrac{d}{dt}\left( v_1(t)\,v_2(t)\,v(t)\right) \right)_{\mid t=a}&=& \left( \dfrac{v_2'(a)\,v_1(a)}{v_2(a)}+2v_1'(a)\right) \,v'(a)+v_1(a)\,v''(a)\,,\\\label{Ec::Ad2}
 \dfrac{d}{dt}\left( \dfrac{1}{v_2(t)}\dfrac{d}{dt}\left( v_1(t)\,v_2(t)\,v(t)\right) \right)_{\mid t=b}&=&\left(  \dfrac{v_2'(b)\,v_1(b)}{v_2(b)}+2v_1'(b)\right) \,v'(b)+v_1(b)\,v''(b)\,.
 \end{eqnarray}}
 
 Now, to see that  two previous equalities are null, let us write $p_1$ in terms of $v_1$ and $v_2$.
 
 In order to do that, we are going to develop the decomposition of the operator $T[0]$ given in  Section \ref{Sc::des}.
{\scriptsize  \begin{eqnarray}\nonumber v_1(t)\,v_2(t)\dfrac{d}{dt}\left( \dfrac{1}{v_2(t)}\dfrac{d}{dt}\left( \dfrac{u''(t)}{v_1(t)}\right)  \right) &=&v_1(t)\,v_2(t)\dfrac{d}{dt}\left(\dfrac{u'''(t)\,v_1(t)-v_1'(t)\,u''(t)}{v_2(t)\,v_1^2(t)}\right) \\\nonumber
 &=&u^{(4)}(t)-\left( \dfrac{v_2'(t)}{v_2(t)}+2\dfrac{v_1'(t)}{v_1(t)}\right) \,u'''(t)\\\nonumber&&+\left( \dfrac{v_2'(t)\,v_1'(t)}{v_2(t)\,v_1(t)}+2\dfrac{v_1'^2(t)}{v_1^2(t)}-\dfrac{v_1''(t)}{v_1(t)}\right) u''(t)\,.\end{eqnarray}}

So, $p_1(t)=-\left( \dfrac{v_2'(t)}{v_2(t)}+2\dfrac{v_1'(t)}{v_1(t)}\right) $ and we can transform the boundary conditions $v''(a)-p_1(a)\,v'(a)=v''(b)-p_1(b)\,v'(b)=0$ in the following way

\begin{eqnarray}
\label{Ec::Ta*} v''(a)+\left( \dfrac{v_2'(b)}{v_2(b)}+2\,\dfrac{v_1'(b)}{v_1(b)}\right) \, v'(a)=0\,,\\\nonumber\\\label{Ec::Tb*}
v''(b)+\left( \dfrac{v_2'(b)}{v_2(b)}+2\,\dfrac{v_1'(b)}{v_1(b)}\right) \,v'(b)=0\,.
\end{eqnarray}

So, multiplying  equations \eqref{Ec::Ad1}--\eqref{Ec::Ad2} by $\dfrac{1}{v_1(a)}$ and $\dfrac{1}{v_1(b)}$ respectively, we obtain exactly the boundary conditions \eqref{Ec::Ta*} and \eqref{Ec::Tb*}. So, both of them are null.

\section{Study of the eigenvalues of operator $T[0]$ in different spaces of definition.}\label{Sc::aut}
 In this section we will prove that the first positive eigenvalues of $T[0]$ in the spaces
 \begin{equation}\label{Esp::U} U_{[a,b]}=\left\lbrace u\in C^4 (I)\quad\mid\quad u(a)=u'(a)=u(b)=u''(b)=0\right\rbrace \,,\end{equation} 
 and  	
 \begin{equation}\label{Esp::V}V_{[a,b]}=\left\lbrace u\in C^4 (I)\quad\mid\quad u(a)=u''(a)=u(b)=u'(b)=0\right\rbrace \,,\end{equation}
 have an associated eigenfunction of constant sign.

 In order to do that, we introduce some preliminary Lemmas.
 
 \begin{lemma}\label{L::10}
 	If the second order linear differential equation \eqref{Ec::2or} is disconjugate on an interval $[c,d]$, then there exist:
 	\begin{itemize}
 		\item ${\lambda_3'}_{[c,d]}>0$ the least positive eigenvalue of $T[0]$ in $U_{[c,d]}$. Moreover there exist a nontrivial constant sign eigenfunction corresponding to the eigenvalue ${\lambda_3'}_{[c,d]}$.
 	
 		\item  ${\lambda_3''}_{[c,d]}>0$ the least positive eigenvalue of $T[0]$ in $V_{[c,d]}$.  Moreover there exist a nontrivial constant sign eigenfunction corresponding to the eigenvalue ${\lambda_3''}_{[c,d]}$.
 	\end{itemize}
 	
 	Furthermore,
 	\begin{itemize}
 		\item For $M\in[-{\lambda_3'}_{[c,d]},0]$, every nontrivial solution of $T[M]\,u(t)=0$, $t\in[c,d]$, verifying one of the following  boundary conditions
 		\begin{eqnarray}\label{Ec::cfuc}u(c)=u(d)=u''(d)&=&0\,,\\\label{Ec::cfu'c}u'(c)=u(d)=u''(d)&=&0\,,\end{eqnarray}
 		does not have any zero on $(c,d)$.
 			\item For $M\in[-{\lambda_3''}_{[c,d]},0]$, every nontrivial solution of $T[M]\,u(t)=0$, $t\in[c,d]$, verifying one of the following boundary conditions
 			\begin{eqnarray}\label{Ec::cfud}u(c)=u''(c)=u(d)&=&0\,,\\\label{Ec::cfu'd}u(c)=u''(c)=u'(d)&=&0\,,\end{eqnarray}
 		
 			does not have any zero on $(c,d)$.
 	\end{itemize}
 \end{lemma}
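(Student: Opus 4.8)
The plan is to reduce both assertions to the spectral results already available, applied to $T[0]$ on the mixed spaces $U_{[c,d]}$ and $V_{[c,d]}$, and then to read off the oscillation statements by a continuation argument in the parameter $M$.

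First I would record that $T[0]$ is disconjugate on $[c,d]$: writing $T[0]\,u=L_2(u'')$ and noting that both $u''=0$ and equation \eqref{Ec::2or} are disconjugate, Theorem \ref{T::comp} gives disconjugacy of the composition, so the factorization \eqref{e-de-T[0]} is available on $[c,d]$ with $v_1,v_2>0$. Next I would establish, exactly as in Lemma \ref{L::pg}, that $T[0]$ is invertible on $U_{[c,d]}$ and that its Green's function satisfies $(P_g)$. For the sign, given $u\in U_{[c,d]}$ with $T[0]\,u\gneqq0$, the factorization forces $\tfrac{1}{v_2}\,z'$ to be nondecreasing, where $z:=u''/v_1$; hence $z$ is decreasing-then-increasing and, since the boundary condition pins $z(d)=0$, it changes sign at most once on $(c,d)$. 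Integrating twice from $c$ with $u(c)=u'(c)=0$ then shows $u>0$ on $(c,d)$ (the degenerate alternative forces $u\equiv0$, contradicting $T[0]\,u\gneqq0$), and the same zero count applied to a homogeneous $U$-solution gives invertibility. Taking $\phi(t)=(t-c)^2(d-t)$ and the extrema of $g_0/\phi$ as in Lemma \ref{L::pg} yields $(P_g)$, so Theorem \ref{T::6} produces ${\lambda_3'}_{[c,d]}>0$, the least eigenvalue in absolute value of $T[0]$ on $U_{[c,d]}$ --- hence the least positive one --- with a constant sign eigenfunction. The statement for $V_{[c,d]}$ follows from the reflection $t\mapsto c+d-t$, which carries $V_{[c,d]}$ onto $U_{[c,d]}$ and preserves disconjugacy of the associated second order equation, so ${\lambda_3''}_{[c,d]}$ is the quantity ${\lambda_3'}$ of the reflected operator.

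For the oscillation statements I would argue by continuation in $M$, treating the families \eqref{Ec::cfuc} and \eqref{Ec::cfu'c}; those for \eqref{Ec::cfud} and \eqref{Ec::cfu'd} then come from the same reflection. At $M=0$ a nontrivial solution satisfying \eqref{Ec::cfuc} or \eqref{Ec::cfu'c} has $u''=v_1w$ with $L_2w=0$ and $w(d)=0$, so $w$ is one-signed on $[c,d)$ and $u$ is strictly convex or concave there; the boundary data then force $u$ to be of constant sign on $(c,d)$. Decreasing $M$ toward $-{\lambda_3'}_{[c,d]}$, I would track the zeros of the (analytically $M$-dependent) solution and show that a zero reaches $(c,d)$ only through an endpoint. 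If it reaches $c$, the solution acquires $u(c)=u'(c)=0$ and thus lies in $U_{[c,d]}$, so $M$ is an eigenvalue of $T[0]$ there; by minimality of ${\lambda_3'}_{[c,d]}$ and invertibility at $M=0$ this is excluded for $M\in(-{\lambda_3'}_{[c,d]},0]$. If, for the family \eqref{Ec::cfuc}, it reaches $d$, the solution acquires a triple zero there and, with $u(c)=0$, lies in $X_1$ (for the interval $[c,d]$); since $n-k=3$ is odd, Theorem \ref{T::10} forbids positive eigenvalues, while the only compatible value $M=0$ is excluded by the convexity just described. At $M=-{\lambda_3'}_{[c,d]}$ the eigenfunction is of constant sign and so has no interior zero, which covers the closed range $[-{\lambda_3'}_{[c,d]},0]$.

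The main obstacle is the control of how zeros enter $(c,d)$: ruling out the spontaneous appearance of an interior double zero and justifying the continuous dependence of the shooting solution and of its zeros on $M$. I expect to handle the interior double zero via the disconjugacy available at $M=0$ together with the oscillation theory of the factorized operator, reducing every boundary degeneration to a membership in $U_{[c,d]}$ or in an auxiliary space whose eigenvalue sign is fixed by Theorem \ref{T::10}. The remaining delicate point is the $d$-endpoint degeneration for the family \eqref{Ec::cfu'c}, whose associated space is not of the form $X_k$; there I would invoke the parity of the corresponding auxiliary problem (again via Theorem \ref{T::10}, after a reflection) to exclude it in the range $(-{\lambda_3'}_{[c,d]},0]$.
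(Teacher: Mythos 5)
Your first half takes a genuinely different route from the paper's. Since $U_{[c,d]}$ and $V_{[c,d]}$ are not of the form $X_k$, the paper cannot invoke Lemma \ref{L::2} for them; instead it applies Lemma \ref{L::2} and Theorem \ref{T::6} to ${X_2}_{[c,d]}$ (clamped conditions $u(c)=u'(c)=u(d)=u'(d)=0$) to obtain a positive eigenvalue $\lambda$ there, and then shoots in $M$: the nonnegative solution $z_M$ of $T[M]\,z_M=0$ with $z_M(c)=z_M'(c)=z_M(d)=0$ satisfies $z_0''(d)<0$ and $z_{-\lambda}''(d)\geq 0$, so by continuity in $M$ there is ${\lambda_3'}_{[c,d]}\in(0,\lambda]$ at which $z''(d)=0$, i.e.\ an eigenvalue of $T[0]$ in $U_{[c,d]}$; the constant sign of the eigenfunction is only deduced afterwards, from the second half of the proof. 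You instead verify the hypotheses of Theorem \ref{T::6} directly on $U_{[c,d]}$ (inverse positivity via the factorization \eqref{e-de-T[0]}, then $(P_g)$ with $\phi(t)=(t-c)^2(d-t)$), which is legitimate because Theorem \ref{T::6} is stated for arbitrary two-point spaces $X_U$, and it delivers the constant-sign eigenfunction at once. The cost is that $(P_g)$ requires the boundary nondegeneracy $\frac{\partial^2}{\partial t^2}g_0(t,s)_{\mid t=c}>0$ and $\frac{\partial}{\partial t}g_0(t,s)_{\mid t=d}<0$ for a.e.\ $s$, an analogue of Theorem \ref{T::14} for $U_{[c,d]}$ which you only gesture at; it does follow from the same zero count, so this is a gap of presentation rather than of substance. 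The reflection $t\mapsto c+d-t$ for $V_{[c,d]}$ and your continuation argument in $M$ for the sign statements are essentially the paper's.

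One step, however, is wrong as proposed: the $d$-endpoint degeneration of the family \eqref{Ec::cfu'c}. Such a degeneration produces a nontrivial nonnegative solution with $u'(c)=u(d)=u'(d)=u''(d)=0$, and, as you observe, this is not an $X_k$ space; but reflection does not repair it --- $t\mapsto c+d-t$ turns these conditions into $u(c)=u'(c)=u''(c)=0$, $u'(d)=0$, again not of $(k,n-k)$ type --- so Theorem \ref{T::10} cannot be invoked ``after a reflection''. (The paper is also silent on this case: it excludes only the genuine $X_1$ and $X_3$ degenerations via Theorem \ref{T::10} and calls the rest analogous; but at least it does not rest on an inapplicable theorem.) The case admits a direct exclusion with the factorization: if $u\geq 0$ were such a nontrivial solution of $T[M]\,u=0$ with $M\leq 0$, then $u'''(d)<0$ (a zero of order four at $d$ would force $u\equiv 0$, and $u\geq 0$ near $d$ fixes the sign); since $T[0]\,u=-M\,u\geq 0$, the function $\frac{1}{v_2}\left(\frac{u''}{v_1}\right)'$ is nondecreasing and negative at $t=d$, hence negative on all of $[c,d]$, so $\frac{u''}{v_1}$ is strictly decreasing and vanishes at $d$, giving $u''>0$ on $[c,d)$; then $u'$ is strictly increasing with $u'(c)=0$, contradicting $u'(d)=0$. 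The symmetric argument disposes of the $c$-endpoint degeneration of \eqref{Ec::cfu'd}. With this substitution your argument closes.
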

 \begin{proof}
Since the linear differential equations \eqref{Ec::2or} and $u''(t)=0$ are disconjugate on $[c,d]$, we can apply Theorem \ref{T::comp} to affirm that $T[0]\,u(t)=0$ is a disconjugate equation in  $[c,d]$ too. 

Then, using Lemma \ref{L::2} and Theorem \ref{T::6} we can affirm that there exists a positive eigenvalue of $T[0]$ in ${X_2}_{[c,d]}$ (with obvious notation) which an associated eigenfunction of constant sing, i.e., verifying the boundary conditions $u(c)=u'(c)=u(d)=u'(d)=0$.
 
 So, we study for $M\leq 0$ the behavior of solutions of problem \[T[M]\,z_M(t)=0\,,\ t\in[c,d]\,,\quad z_M(c)=z_M'(c)=z_M(d)=0\,.\] 
 
 First, let us see what happens for $M=0$.
 
 Let us use the decomposition given in \eqref{e-de-T[0]} to see how a nontrivial function $z_0  \in C^4([c,d])$, satisfying
 \[T[0]z_0(t)=0, \; t \in [c,d], \quad z_0(c)=z_0'(c)=z_0(d)=0,\]
behaves.
 
 Since $v_2\,v_1>0$ on $[c,d]$, we have that $\frac{1}{v_2}\frac{d}{dt}\left( \frac{z_0''}{v_1}\right)$ is a constant function on $[c,d]$.
 
 Now, taking into account that $v_2>0$,  we deduce that $\frac{d}{dt}\left( \frac{z_0''}{v_1}\right)$ is a  constant sign function  and $\dfrac{z_0''}{v_1}$ is a monotone function on $[c,d]$, which can have at most a zero. Since $v_1>0$, this last property of maximum number of zeros also holds for $z_0''$.
 
 Then, $z_0$ can have at most three zeros counting according to their multiplicity, which in fact it has, since $z_0(c)=z_0'(c)=z_0(d)=0$. Then, we can affirm that necessarily $z_0''(c)z_0''(d)<0$ to ensure the maximum number of zeros.
 
 Let use  assume that $z_0(t)\geq 0$, if $z_0(t)\leq 0$ the arguments are analogous.
 
 Since $z_0(c)=z_0'(c)=0$, we can affirm that $z_0''(c)>0$, so $z_0''(d)<0$.
 
 As consequence, if we move $M\leq 0$ starting at zero, we will arrive to  $M=-\lambda$, where $\lambda$ is the least positive eigenvalue of $T[0]$ in ${X_2}_{[c,d]}$. So, \[T[-\lambda]z_{-\lambda}(t)=0\,,\ t\in I\,,\quad z_{-\lambda}(c)=z_{-\lambda}'(c)=z_{-\lambda}(d)=z_{-\lambda}'(d)=0\,,\] and  $z_{-\lambda}(t)>0$ on $(c,d)$. Obviously, we have that $z_{-\lambda}''(d)\geq 0$. Then, it must exist ${\lambda_3'}_{[c,d]}\in (0,\lambda]$, such that  \[T[-\lambda_3']z_{-\lambda_3'}(t)=0\,,\ t\in I\,,\quad z_{-{\lambda_3'}_{[c,d]}}(c)=z_{-{\lambda_3'}_{[c,d]}}'(c)=z_{-{\lambda_3'}_{[c,d]}}(d)=z_{-{\lambda_3'}_{[c,d]}}''(d)=0\,,\] i.e., there is a positive eigenvalue, ${\lambda_3'}_{[c,d]}$, of $T[0]$ in $U_{[c,d]}$.
 
 Analogously, we can see that there is a positive eigenvalue of $T[0]$ in $V_{[c,d]}$, ${\lambda_3''}_{[c,d]}\in(0,\lambda]$.
 
 The fact that the associated eigenfunctions are of constant sign follows from the second part of the proof.

 \vspace{0.8cm}
 
 Now, let us prove the second part of the result.
 
  First, let us use the decomposition given in \eqref{e-de-T[0]} to see that every nontrivial function $u_0  \in C^4([c,d])$, satisfying
  $$T[0]u_0(t)=0, \; t \in [c,d]\,,$$
  coupled with boundary conditions either \eqref{Ec::cfuc} or \eqref{Ec::cfu'c} is of constant sign in $[c,d]$.
  
  Arguing as above, we deduce that $\dfrac{u_0''}{v_1}$ is a monotone function on $[c,d]$, which verifies $\dfrac{u_0''(d)}{v_1(d)}=0$. Thus,  it is of constant sign and, since $v_1(t)>0$ for all $t\in [c,d]$, also does $u_0''$.
  
 As consequence, we have proved that $u_0$ is a strictly concave or convex function which can have at most two zeros on $[c,d]$. With boundary conditions \eqref{Ec::cfuc}, using that $u_0(c)=u_0(d)=0$ we conclude that $u_0$ is of constant sign on $[c,d]$.
  
 We point out that if $u_0'(c)=0$ or $u_0'(d)=0$, since $u_0''$ is of constant sign on $(c,d)$, we have that $u_0'$ is a function of constant sign on $[c,d]$, then $u_0(c)=u_0(d)=0$ implies that $u_0\equiv 0$ on $[c,d]$. In other words, $0$ is not an eigenvalue of operator $T[0]$ coupled either with boundary conditions $u(c)=u'(c)=u(d)=u''(d)=0$ or  $u(c)=u(d)=u'(d)=u''(d)=0$.
 
 Now, working with boundary conditions \eqref{Ec::cfu'c}, we know that $u'_0(c)=0$, then with the previous argument $u_0'$ is of constant sign. Hence, $u_0$ is a monotone function with at most a zero, which satisfies $u_0(d)=0$, then $u_0$ is of constant sign on $[c,d)$. 
 
 \vspace{0.8cm}
 
  Let us see that every solution \[T[M]\,u_M(t)=0\,,\quad t\in[c,d]\,,\] verifying the boundary conditions \eqref{Ec::cfuc} will be of constant sign until one of the following assertions holds: $u_M'(c)=0$ or $u_M'(d)=0$.
  
  From the properties deduced for $M=0$, we know that $u_M >0$  or $u_M<0$ on $(c,d)$ for all $M$ in a suitable interval containing $0$ as interior point. We consider the case where $u_M>0$, the other is analogous.
  
  For all $M$ on that interval we have that\[T[0]\,u_M(t)=-M\,u_M(t)\,,\quad t\in[c,d]\,,\] and, hence $T[0]\,u_M$ is of constant sign on $(c,d)$. Since $v_1\,v_2>0$ on $[c,d]$, $\dfrac{1}{v_2}\dfrac{d}{dt}\left( \dfrac{u_M''}{v_1}\right)$ is a monotone function, that vanish at most once on $[c,d]$. Thus, $\dfrac{d}{dt}\left( \dfrac{u_M''}{v_1}\right)$ can have at most one zero on $[c,d]$ too. Then $\dfrac{u_M''}{v_1}$ has at most two zeros, being zero at $t=d$, so it can have only a change of sign on $[c,d]$, and  the same happens to $u_M''$, then $u_M'$ can have two zeros at most, and $u_M$ has three or less zeros in $[c,d]$. Using that $u_M(c)=u_M(d)=0$, we deduce that it cannot have a double zero in $(c,d)$ while it is of constant sign.
  
  The proof that every solution \[T[M]\,u_M(t)=0\,,\quad t\in[c,d]\,,\] verifying the boundary conditions \eqref{Ec::cfu'c} will be of constant sign until one of the following assertions holds: $u_M(c)=0$ or $u_M'(d)=0$ is analogous.
  
  Analogously, it can be seen that a solution of
  \begin{equation}
  T[M]\,v_M(t)=0, \,  t \in [c,d]\,,
  \end{equation} 
  verifying either the boundary conditions \eqref{Ec::cfud} or \eqref{Ec::cfu'c} cannot have a double zero on $(c,d)$ while it is of constant sign, and, moreover the change's sign becomes on the boundary, $v_M'(c)=0$ or $v_M'(d)=0$ for \eqref{Ec::cfud} and $v_M'(c)=0$ or $v_M(d)=0$ for \eqref{Ec::cfu'd}
  
  It is important to point out that in both situations we do not impose any sign conditions to the real parameter $M$.

  Since, due to Theorem \ref{T::10}, for $M<0$ it cannot exist a nontrivial solution of \[T[M]\,u(t)=0\,,\quad t\in[c,d]\,,\] coupled either with  boundary conditions $u(c)=u'(c)=u''(c)=u(d)=0$ or $u(c)=u(d)=u'(d)=u''(d)=0$ ; we can affirm that  the eigenfunctions associated to the least positive eigenvalues in $U_{[c,d]}$ and $V_{[c,d]}$ are of constant sign on $(c,d)$ and that for every $M\in[-{\lambda_3'}_{[c,d]},0]$ every solution of $T[M]\,u_M(t)=0$ verifying  either the boundary conditions \eqref{Ec::cfuc} or \eqref{Ec::cfu'c} is of constant sign. Analogously, for every $M\in[-{\lambda_3''}_{[c,d]},0]$ every solution of $T[M]\,v_M(t)=0$ verifying either the boundary conditions \eqref{Ec::cfud} or \eqref{Ec::cfu'd} is of constant sign. 
 \end{proof}
   

 \begin{remark}\label{Rem::1}
 	Arguing as in Section \ref{Sc::adj} we can obtain the adjoint spaces of definition of $U_{[a,b]}$ and $V_{[a,b]}$. They are given by
 	\begin{eqnarray}
 	\nonumber U^*_{[a,b]}&=&\left\lbrace u\in C^4(I)\quad\mid u(a)=u'(a)=u(b)=u''(b)-p_1(b)\,u'(b)=0\right\rbrace\,, \\\nonumber
 	V^*_{[a,b]}&=&\left\lbrace u\in C^4(I)\quad\mid u(a)=u''(a)-p_1(a)\,u'(a)=u(b)=u'(b)=0\right\rbrace\,.
 	\end{eqnarray}
 	
 	\end{remark}

 Using similar arguments we obtain the following Lemma.
 
  \begin{lemma}\label{L::11}
  	If the second order linear differential equation \eqref{Ec::2or} is disconjugate on $[c,d]$, there exist:
  	\begin{itemize}
  		\item ${\lambda_3'}_{[c,d]}>0$ the least positive eigenvalue of $T^*[0]$ in $U_{[c,d]}^*$.  Moreover there exist a nontrivial constant sign eigenfunction corresponding to the eigenvalue ${\lambda_3'}_{[c,d]}$.
  		\item  ${\lambda_3''}_{[c,d]}>0$ the least positive eigenvalue of $T^*[0]$ in $V_{[c,d]}^*$.  Moreover there exist a nontrivial constant sign eigenfunction corresponding to the eigenvalue ${\lambda_3''}_{[c,d]}$.
  	
  	\end{itemize}
  	
  	Furthermore,
  	\begin{itemize}
  		\item For $M\in[-{\lambda_3'}_{[c,d]},0]$, every nontrivial solution of $T^*[M]\,v(t)=0$, verifying one of the following boundary conditions
  		
  		\[v(c)=v(d)=v''(d)-p_1(d)\,v'(d)=0\,,\]
  		does not have any zero on $(c,d)$.
  		\item For $M\in[-{\lambda_3''}_{[c,d]},0]$, every nontrivial solution of $T^*[M]\,v(t)=0$, verifying the boundary conditions
  		\[v(c)=v''(c)-p_1(c)\,v'(c)=v(d)=0\,,\]
  		does not have any zero on $(c,d)$.
  	\end{itemize}
  \end{lemma}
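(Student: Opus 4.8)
The statement is the exact analogue of Lemma \ref{L::10} but for the adjoint operator $T^*[0]$ on the adjoint spaces $U^*_{[c,d]}$ and $V^*_{[c,d]}$. The most economical route is to avoid redoing the oscillation analysis from scratch and instead transfer everything through the adjoint relationship. The key observation is the spectral self-duality: by Theorem \ref{T::ad} the equation $T^*[0]\,v=0$ is disconjugate on $[c,d]$ whenever $T[0]\,u=0$ is, and by the identity \eqref{Ec::gg}, $g^*_M(t,s)=g_M(s,t)$, the Green's function of the adjoint is the transpose of the original. Since $M$ is an eigenvalue of $T[0]$ on a space $Y$ precisely when the associated Green's function develops a pole, and transposing $g_M$ preserves this, the eigenvalues of $T^*[0]$ on the adjoint space $Y^*$ coincide with those of $T[0]$ on $Y$. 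This is exactly why the statement reuses the same constants ${\lambda_3'}_{[c,d]}$ and ${\lambda_3''}_{[c,d]}$: they are literally the same numbers. So the first step is to invoke $U^*_{[a,b]}$ and $V^*_{[a,b]}$ as computed in Remark \ref{Rem::1}, and to note that the first positive eigenvalue of $T^*[0]$ on $U^*_{[c,d]}$ (resp. $V^*_{[c,d]}$) equals that of $T[0]$ on $U_{[c,d]}$ (resp. $V_{[c,d]}$).

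Alternatively — and this is probably cleaner to write — one repeats the direct argument of Lemma \ref{L::10} using the decomposition \eqref{e-de-T*[0]} of $T^*[0]$. The plan is: since $v_1\,v_2>0$ on $[c,d]$, a solution of $T^*[M]\,v=0$ with $M\le 0$ makes the innermost bracketed quantity $\frac{1}{v_2}\frac{d}{dt}(v_1 v_2 v)$ have the analogous monotonicity/constant-sign structure, so one counts zeros exactly as before. The boundary conditions defining $U^*_{[c,d]}$ and $V^*_{[c,d]}$ were engineered in Section \ref{Sc::adj} so that the outer two derivative levels of the decomposition vanish at the endpoints; this is the content already verified around equations \eqref{Ec::Ad1}--\eqref{Ec::Tb*}, namely that $\frac{d}{dt}(\frac{1}{v_2}\frac{d}{dt}(v_1 v_2 v))$ vanishes at $t=c$ and $t=d$ for $v\in X^*$. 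One then runs the same zero-counting: the relevant quotient is monotone, hence of constant sign and with at most the prescribed number of zeros, so $v$ itself cannot accumulate enough zeros to have a double zero on $(c,d)$ while being of constant sign.

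I would organize the write-up by first stating that existence of the least positive eigenvalues, together with the constant-sign character of their eigenfunctions, follows from Lemma \ref{L::2} and Theorem \ref{T::6} applied to $T^*[0]$ on ${X_2^*}_{[c,d]}={X_2}_{[c,d]}$ (using Remark \ref{Rm::3.1}, which gives $X_k^*=X_{n-k}$, so the symmetric space $X_2$ is self-adjoint in dimension four), then moving $M$ down from $0$ exactly as in Lemma \ref{L::10} to produce the eigenvalues on $U^*_{[c,d]}$ and $V^*_{[c,d]}$. For the non-oscillation (no zero on $(c,d)$) claims, I would invoke Theorem \ref{T::10} for $T^*[0]$ — legitimate because $T^*[0]$ is disconjugate by Theorem \ref{T::ad} — to rule out sign changes on the negative-$M$ side, combined with the monotonicity from the decomposition \eqref{e-de-T*[0]}.

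The main obstacle is bookkeeping rather than genuine difficulty: one must be careful that the boundary conditions in $U^*_{[c,d]}$ and $V^*_{[c,d]}$ (which carry the $p_1$-dependent terms $v''-p_1 v'$) interact correctly with the adjoint decomposition \eqref{e-de-T*[0]}, so that the endpoint-vanishing of the outer derivative levels still holds on the subinterval $[c,d]$ and the zero-count is the intended one. In particular, the asymmetry between $U^*$ and $V^*$ (the modified condition sits at the $b$-end for one and the $a$-end for the other) must be tracked so that the monotone quotient vanishes at the correct endpoint, mirroring the roles of conditions \eqref{Ec::cfuc}--\eqref{Ec::cfu'd} in Lemma \ref{L::10}. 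Once that correspondence is set up, the ``similar arguments'' phrase in the statement is fully justified and the conclusion follows verbatim from the proof of Lemma \ref{L::10}.
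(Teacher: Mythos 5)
Your proposal is correct and follows essentially the same route as the paper: the authors likewise obtain the eigenvalues of $T^*[0]$ in $U^*_{[c,d]}$ and $V^*_{[c,d]}$ by transferring those of Lemma \ref{L::10} through the relation \eqref{Ec::gg}, then note (using the Section \ref{Sc::adj} computations, i.e.\ the content of \eqref{Ec::Ad1}--\eqref{Ec::Tb*}) that the adjoint boundary conditions force $\frac{d}{dt}\bigl(\frac{1}{v_2}\frac{d}{dt}(v_1\,v_2\,v)\bigr)$ to vanish at the endpoints, and finish by repeating the zero-counting of Lemma \ref{L::10} with the adjoint decomposition \eqref{e-de-T*[0]}. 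The ``bookkeeping'' issue you flag is precisely the step the paper resolves with those endpoint identities, so your plan matches the published proof in both structure and substance.
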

 \begin{proof}
 	Using \eqref{Ec::gg} we can affirm that the least positive eigenvalue of $T^*[0]$ in $U^*_{[c,d]}$ exists and it coincides with ${\lambda_3'}_{[c,d]}$ and the least positive eigenvalue of $T^*[0]$ in $V^*_{[c,d]}$ exists and it is ${\lambda_3''}_{[c,d]}$, where ${\lambda_3'}_{[c,d]}$ and ${\lambda_3''}_{[c,d]}$ are given in Lemma \ref{L::10}
 		
 	Now,  from the calculation made at Section \ref{Sc::adj}, we can affirm that the boundary conditions 
	
$u(c)=u''(c)-p_1(c)\,u'(c)=0$, imply  { $\dfrac{d}{dt}\left( \dfrac{1}{v_2(t)}\dfrac{d}{dt}\left( v_2(t)v_1(t)v(t)\right) \right)_{\mid t=c}=0 $} 
	and 
	
	$u(d)=u''(d)-p_1(d)\,u'(d)=0$, imply  $\dfrac{d}{dt}\left( \dfrac{1}{v_2(t)}\dfrac{d}{dt}\left( v_2(t)\,v_1(t)\,v(t)\right) \right)_{\mid t=d}=0 $.
	
	The end of the proof follows the same arguments as Lemma \ref{L::10}, using, in this situation, the decomposition of the adjoint operator given in \eqref{e-de-T*[0]}.
 \end{proof}
 
As consequence of these results we obtain a property of the existence of eigenvalues in some spaces.

\begin{lemma}\label{L::12}
If the second order linear differential equation \eqref{Ec::2or} is disconjugate on $[c,d]$, the following assertions are verified:
	
	\begin{itemize}
	
\item For all $M\in(-{\lambda_3'}_{[c,d]},0]$ and $e\in[c,d)$, the following problem has no nontrivial solution $u \in C^4([e,d])$:
	\begin{equation}
	T[M]\,u(t)=0, \,  t \in [e,d], \quad u(e)=u'(e)=u(d)=u''(d)=0.
	\end{equation}

\item For all $M\in(-{\lambda_3''}_{[c,d]},0]$ and $e\in(c,d]$, the following problem has no nontrivial solution $u \in C^4([e,d])$:
	$$T[M]\,u(t)=0, \;  t \in [c,e], \quad u(c)=u''(c)=u(e)=u'(e)=0,$$

	\end{itemize}
\end{lemma}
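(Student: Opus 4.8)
The plan is to read each assertion as the non\mbox{-}existence of a certain eigenvalue on a shrunken interval, and then to reduce that to a domain\mbox{-}monotonicity statement for the least positive eigenvalue. For the first bullet, observe that the boundary conditions $u(e)=u'(e)=u(d)=u''(d)=0$ are exactly those defining the space $U_{[e,d]}$, so a nontrivial solution of $T[M]\,u=0$ on $[e,d]$ satisfying them is the same as an eigenfunction of $T[0]$ on $U_{[e,d]}$ with eigenvalue $-M$. To locate such eigenfunctions I would work on the fixed interval $[c,d]$ with the two constant\mbox{-}sign solutions produced by Lemma \ref{L::10} for the given $M\in(-{\lambda_3'}_{[c,d]},0]$: the solution $\phi$ of type \eqref{Ec::cfuc} (with $\phi(c)=\phi(d)=\phi''(d)=0$) and the solution $\psi$ of type \eqref{Ec::cfu'c} (with $\psi'(c)=\psi(d)=\psi''(d)=0$), both strictly positive on $(c,d)$ and spanning the two\mbox{-}dimensional space $\{u:\,T[M]u=0,\ u(d)=u''(d)=0\}$. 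Any candidate eigenfunction on $U_{[e,d]}$ is a combination $\alpha\phi+\beta\psi$ possessing a double zero at the point $e$, and a nontrivial such combination exists if and only if the Wronskian $W(\phi,\psi)=\phi\,\psi'-\phi'\,\psi$ vanishes at $e$. Thus the whole first bullet reduces to showing that $W(\phi,\psi)\neq 0$ on $[c,d)$ for every $M\in(-{\lambda_3'}_{[c,d]},0]$.

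The endpoint $e=c$ is disposed of immediately: since $\phi(c)=0$ and $\psi'(c)=0$, one has $W(\phi,\psi)(c)=-\phi'(c)\,\psi(c)<0$, because $\phi'(c)>0$ and $\psi(c)>0$. Equivalently, no combination $\alpha\phi+\beta\psi$ can acquire a double zero at $c$ for these values of $M$, which is consistent with ${\lambda_3'}_{[c,d]}$ being excluded by the open endpoint of the parameter interval. For interior $e$ the vanishing of $W(\phi,\psi)$ is the vanishing of the derivative of the ratio $\psi/\phi$, a function that tends to $+\infty$ at $c$ and to a finite positive limit at $d$; so the claim is that $\psi/\phi$ is strictly monotone on $(c,d)$, i.e. has no interior critical point. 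This is exactly the infinitesimal form of the strict domain monotonicity ${\lambda_3'}_{[e,d]}>{\lambda_3'}_{[c,d]}$ for $e>c$, and establishing it is the crux.

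I would prove $W(\phi,\psi)\neq 0$ on $(c,d)$ by a continuity argument in $M$ anchored at $M=0$. At $M=0$ the factorization \eqref{e-de-T[0]} shows directly, as in the first part of Lemma \ref{L::10}, that the solutions of types \eqref{Ec::cfuc} and \eqref{Ec::cfu'c} are strictly of one sign, whence $\psi/\phi$ is strictly monotone and $W(\phi,\psi)$ is nonvanishing on $(c,d)$. As $M$ decreases the picture deforms continuously, and the point is to show that the no\mbox{-}interior\mbox{-}zero conclusions of Lemma \ref{L::10} for these two families persist throughout the range $(-{\lambda_3'}_{[c,d]},0]$, so that an interior zero of $W$ — equivalently an interior double zero of some $\alpha\phi+\beta\psi$, equivalently a focal point — cannot be born before $M$ reaches the threshold $-{\lambda_3'}_{[c,d]}$, where such a double zero first forms at the endpoint $c$ and yields the eigenfunction of $U_{[c,d]}$. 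I expect this control of the interior zeros of $W(\phi,\psi)$ as $M$ varies to be the main obstacle: a soft count of zeros against the disconjugacy of $T[M]$ on $[c,d]$ (which does hold for $M$ above the threshold) does not by itself suffice, since it leaves open the degenerate configuration in which $\alpha\phi+\beta\psi$ keeps one sign and merely touches zero at $e$; ruling this out is precisely what the continuity/first\mbox{-}occurrence argument must achieve.

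For the second bullet the conditions $u(c)=u''(c)=u(e)=u'(e)=0$ on $[c,e]$ are exactly those defining $V_{[c,e]}$, and the argument is the mirror image, now with the right endpoint $e$ moving inward from $d$. One option is to repeat the reasoning above using the two constant\mbox{-}sign solutions on $[c,d]$ associated with the boundary conditions \eqref{Ec::cfud} and \eqref{Ec::cfu'd} from Lemma \ref{L::10}, reducing the claim to ${\lambda_3''}_{[c,e]}\ge{\lambda_3''}_{[c,d]}$. Alternatively, and perhaps more transparently, one passes to the adjoint: by \eqref{Ec::gg} the eigenvalues of $T[0]$ on $V_{[c,e]}$ coincide with those of $T^*[0]$ on $V^*_{[c,e]}$, and Lemma \ref{L::11} supplies the corresponding constant\mbox{-}sign eigenfunctions together with the decomposition \eqref{e-de-T*[0]}, so the same Wronskian reformulation and continuity\mbox{-}in\mbox{-}$M$ monotonicity argument applies verbatim.
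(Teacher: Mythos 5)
Your reduction is the same as the paper's (a Wronskian of two solutions spanning $\{u:\,T[M]u=0,\ u(d)=u''(d)=0\}$, the endpoint $e=c$ handled by the least-eigenvalue property of ${\lambda_3'}_{[c,d]}$, and a continuity-in-$M$ argument anchored at $M=0$), but the proposal stops exactly where the real work begins. You correctly identify the dangerous configuration --- a combination $\alpha\phi+\beta\psi$ that stays of one sign and merely touches zero at an interior point --- and then write that ruling it out "is precisely what the continuity/first-occurrence argument must achieve", without ever giving that argument. This is not a detail; it is the core of the lemma, since nothing you have written excludes an interior zero of the Wronskian being born tangentially at some $M\in(-{\lambda_3'}_{[c,d]},0)$. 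The paper closes this gap by a specific mechanism: at the first $\bar M$ (decreasing from $0$) for which the Wronskian $W_{\bar M}$ vanishes at some $\bar e\in(c,d)$, the fixed sign of $W_M$ on $[c,d)$ for $M>\bar M$ forces $\bar e$ to be an interior extremum, so $W_{\bar M}(\bar e)=W_{\bar M}'(\bar e)=0$ simultaneously. Because $W_M'=y_2y_4''-y_2''y_4$, the touching solution $y=y_4[\bar M](\bar e)\,y_2[\bar M]-y_2[\bar M](\bar e)\,y_4[\bar M]$ then has a \emph{triple} zero at $\bar e$ in addition to $y(d)=y''(d)=0$. Hence $-\bar M>0$ is a positive eigenvalue of $T[0]$ on $[\bar e,d]$ under the $(3,1)$ conditions $y(\bar e)=y'(\bar e)=y''(\bar e)=y(d)=0$; but $T[0]$ is disconjugate on $[\bar e,d]$ by Theorem \ref{T::comp}, and Theorem \ref{T::10} forbids positive eigenvalues in $X_3$ since $n-k=1$ is odd --- contradiction. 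It is this conversion of the tangential touching into a $(3,1)$ eigenfunction, killed by disconjugacy, that your proof lacks, and no soft zero count replaces it.

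A secondary flaw: at the anchor $M=0$ you infer nonvanishing of $W(\phi,\psi)$ on $(c,d)$ from the constant sign of $\phi$ and $\psi$ ("whence $\psi/\phi$ is strictly monotone"). That inference is false in general: two positive functions can admit a combination with an interior double zero (e.g.\ $\phi\equiv 1$ and $\psi=1+(t-e)^2$ give $W=2(t-e)$, vanishing at $e$, with $\psi-\phi$ touching zero there). What the factorization \eqref{e-de-T[0]} actually yields --- and what the proof of Lemma \ref{L::10} uses --- is a zero count valid for \emph{every} nontrivial member of the two-dimensional family: any solution with $u(d)=u''(d)=0$ has $u''/v_1$ monotone and vanishing at $d$, hence $u''$ of one sign, so $u$ is strictly convex or concave and cannot have a double zero at $e<d$ while also vanishing at $d$. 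Run that argument on arbitrary combinations and your $M=0$ step becomes sound; as stated, it is a non sequitur. The mirror/adjoint treatment you sketch for the second bullet is fine in outline, but it inherits both of the above gaps.
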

\begin{proof}
	We are going to prove the first assertion, the proof of the second one is analogous.

To this end, we get the fundamental system of solutions $y_1[M](t)$, $y_2[M](t)$, $y_3[M](t)$, $y_4[M](t) $, where $y_i^{(i-1)}(d)=1$ and $y_{i}^{(k)}(d)=0$ for $k\in\{0,1,2,3\}$, $k\neq i-1$, and we construct the Wronskian
\[W_M(t)=\left| \begin{array}{cc}
y_2[M](t)&y_4[M](t)\\&\\
y_2'[M](t)&y_4'[M](t)\end{array}\right| \,,\]
which is a continuous function with respect to $M$.

%

It is obvious that the general solution of problem 
$$T[M]\,y(t)=0\,,\quad t \in [c,d], \quad y(d)=y''(d)=0,$$
 is given by the expression
\[y(t)=\alpha_1\,y_2[M](t)+\alpha_2\,y_4[M](t),\]
with $\alpha_1, \; \alpha_2 \in \R$.

Every nontrivial solution given by the previous expression, verifies $y(e)=y'(e)=0$ if, and only if, $W_M(e)=0$.

Moreover, taking into account the proof of Lemma \ref{L::10}, we can affirm that $W_0(e)\neq 0$ for every $e\in[c,d)$.

Also, since we have seen that the first positive eigenvalue of $T[0]$ in $U_{[c,d]}$ is ${\lambda_3'}_{[c,d]}$, we can affirm that $W_M(c)\neq 0$ for every $M\in(-{\lambda_3'}_{[c,d]},0]$.

Then if there exist  $M\in(-{\lambda_3'}_{[c,d]},0]$  and  $e \in [c,d]$ for which $W_M(e)=0$. Then the following problem has a nontrivial solution 
$$T[M]\,u(t)=0, \;  t \in [e,d], \quad u(e)=u'(e)=u(d)=u''(d)=0.$$

Since $W_M(t)$ is a continuous function of $M$, it must exist $\bar{M}\in (-{\lambda_3'}_{[c,d]},0]$ and  $\bar{e}\in [c,d]$ such that $W_{\bar{M}}(\bar{e})=0$ and $W_{\bar{M}}'(\bar{e})=0$, i.e.
	\[\left| \begin{array}{cc}
	y_2[\bar{M}](\bar{e})&y_4[\bar{M}](\bar{e})\\&\\&\\
		y_2'[\bar{M}](\bar{e})&y_4'[\bar{M}](\bar{e})\end{array}\right|=0 \quad \text{and}\quad \left| \begin{array}{cc}
		y_2[\bar{M}](\bar{e})&y_4[\bar{M}](\bar{e})\\&\\&\\
		y_2''[\bar{M}](\bar{e})&y_4''[\bar{M}](\bar{e})\end{array}\right|=0\,.\]
		
		Then, if we consider  function
		\[y(t)=y_4[\bar{M}](\bar{e})\,y_2[\bar{M}]\,(t)-y_2[\bar{M}](\bar{e})\,y_4[\bar{M}]\,(t)\,,\]
		it satisfies $T[M]\,y(t)=0$ on $[c,d]$ coupled with the boundary conditions $y(\bar{e})=y'(\bar{e})=y''(\bar{e})=y(d)=y''(d)=0$. In particular, it satisfies the boundary conditions $3-1$ in $[e,d]$ and, as consequence, there exists a positive eigenvalue of $T[0]$ in ${X_3}_{[c,d]}$.
		
		Now, since the linear differential equations \eqref{Ec::2or} and $u''(t)=0$ are disconjugate on $[e,d]$, we can apply  Theorem \ref{T::comp} to affirm that $T[0]\,u(t)=0$ is a disconjugate equation in $[e,d]$ too. So, since $n-k=1$ is an odd number we attain a contradiction with Theorem \ref{T::10}.	
\end{proof}

As a direct corollary from this property we obtain the following result.
		\begin{corollary}\label{Cor::2}
			If the second order linear differential equation \eqref{Ec::2or} is disconjugate on $[c,d]$, then:
		\begin{itemize}
			\item The least positive eigenvalue of $T[0]$ in $U_{[e,d]}$, ${\lambda_3'}_{[e,d]}$, increases with respect to $e\in[c,d)$.
			\item The least positive eigenvalue of $T[0]$ in $V_{[c,e]}$, ${\lambda_3''}_{[c,e]}$, decreases with respect to $e\in(c,d]$.
		\end{itemize}
		\end{corollary}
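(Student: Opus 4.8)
The plan is to read off both monotonicity statements directly from Lemma \ref{L::12}, which is exactly why this is phrased as a corollary. The only real conceptual step is the standard translation between eigenvalues and solvability: a real number $\lambda$ is an eigenvalue of $T[0]$ in a space $U_{[e,d]}$ (resp. $V_{[c,e]}$) precisely when the equation $T[0]\,u=\lambda\,u$, that is $T[-\lambda]\,u=0$, admits a nontrivial solution satisfying the corresponding boundary conditions. Thus ${\lambda_3'}_{[e,d]}$ is the smallest $\lambda>0$ for which $T[-\lambda]\,u=0$ has a nontrivial solution in $U_{[e,d]}$, and analogously ${\lambda_3''}_{[c,e]}$ for $V_{[c,e]}$.

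For the first item, I would fix $c\le e_1\le e_2<d$ and aim to show ${\lambda_3'}_{[e_1,d]}\le {\lambda_3'}_{[e_2,d]}$. Since equation \eqref{Ec::2or} is disconjugate on $[c,d]$, it is disconjugate on every subinterval, in particular on $[e_1,d]$; hence Lemma \ref{L::12} applies with $[e_1,d]$ playing the role of the reference interval and with $e_2\in[e_1,d)$. Its first assertion then guarantees that for every $M\in(-{\lambda_3'}_{[e_1,d]},0]$ the problem $T[M]\,u=0$ on $[e_2,d]$ with $u(e_2)=u'(e_2)=u(d)=u''(d)=0$, i.e.\ with the boundary conditions defining $U_{[e_2,d]}$, has only the trivial solution. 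Writing $M=-\lambda$, this says that no $\lambda\in[0,{\lambda_3'}_{[e_1,d]})$ is an eigenvalue of $T[0]$ in $U_{[e_2,d]}$, so the least positive eigenvalue there satisfies ${\lambda_3'}_{[e_2,d]}\ge {\lambda_3'}_{[e_1,d]}$, which is the claimed monotonicity.

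The second item follows by the symmetric argument using the second assertion of Lemma \ref{L::12}. Fixing $c<e_1<e_2\le d$, I would apply the lemma with $[c,e_2]$ as reference interval and $e_1\in(c,e_2]$, concluding that for every $M\in(-{\lambda_3''}_{[c,e_2]},0]$ the equation $T[M]\,u=0$ on $[c,e_1]$ with $u(c)=u''(c)=u(e_1)=u'(e_1)=0$ has no nontrivial solution; these are exactly the conditions defining $V_{[c,e_1]}$. Hence no $\lambda\in[0,{\lambda_3''}_{[c,e_2]})$ is an eigenvalue of $T[0]$ in $V_{[c,e_1]}$, giving ${\lambda_3''}_{[c,e_1]}\ge {\lambda_3''}_{[c,e_2]}$, i.e.\ ${\lambda_3''}_{[c,e]}$ decreases in $e$.

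I expect no serious obstacle: the content is entirely contained in Lemma \ref{L::12}, and the proof amounts to choosing the correct reference interval in each case and keeping track of the sign convention $M=-\lambda$. The one point demanding a line of care is the passage of the disconjugacy hypothesis to the subintervals $[e_1,d]$ and $[c,e_2]$, which is immediate since disconjugacy on $[c,d]$ is inherited by every subinterval.
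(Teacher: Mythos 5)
Your proposal is correct and matches the paper's intent exactly: the paper offers no separate argument for this corollary beyond declaring it a direct consequence of Lemma \ref{L::12}, and your write-up is precisely that deduction, with the two necessary supporting observations (disconjugacy of \eqref{Ec::2or} is inherited by subintervals, so the lemma can be re-based at $[e_1,d]$ or $[c,e_2]$, and the translation $M=-\lambda$ between the lemma's nonexistence statement and eigenvalues). The only caveat worth noting is that this argument yields monotonicity in the non-strict sense, which is all that Lemma \ref{L::12} directly provides and is sufficient for the use made of the corollary in Step 5 of Theorem \ref{T::1}.
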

		
\section{Main result}
This section is devoted to prove the main result of this paper. The result is the following.
\begin{theorem}\label{T::1}
	If the second order linear differential equation \eqref{Ec::2or} is disconjugate on $I$, then $T[M]$ is strongly inverse positive in $X$ if, and only if, $M\in(-\lambda_1,-\lambda_2]$, where:
	\begin{itemize}
		\item $\lambda_1>0$ is the least positive eigenvalue of $T[0]$ in $X$.
		\item $\lambda_2<0$ is the maximum between:
		\begin{itemize}
			\item $\lambda_2'<0$, the biggest negative eigenvalue of $T[0]$ in $X_1$.
			\item $\lambda_2''<0$, the biggest negative eigenvalue of $T[0]$ in $X_3$. 
		\end{itemize}
	\end{itemize}
	
Moreover, $T[M]$ is strongly inverse negative in $X$ if, and only if, $M\in[-\lambda_3,-\lambda_1)$, where:
	\begin{itemize}
		\item $\lambda_3>0$ is the minimum between:
		\begin{itemize}
			\item $\lambda_3'\equiv{\lambda_3'}_{[a,b]}>0$, the least positive eigenvalue of $T[0]$ in $U_{[a,b]}$.
		
			\item $\lambda_3''\equiv{\lambda_3''}_{[a,b]}>0$, the least positive eigenvalue of $T[0]$ in $V_{[a,b]}$.
			
		\end{itemize}
	\end{itemize}
\end{theorem}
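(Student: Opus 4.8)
The plan is to treat the two characterizations in parallel, pivoting on the common endpoint $-\lambda_1$ and on Theorem \ref{T::14}, which reduces both notions to sign information about $g_M$ together with its two boundary $t$-derivatives $s\mapsto\frac{\partial}{\partial t}g_M(t,s)_{\mid t=a}$ and $s\mapsto\frac{\partial}{\partial t}g_M(t,s)_{\mid t=b}$. First I would fix the reference parameter $\bar M=0$: by Lemma \ref{L::pg} operator $T[0]$ is strongly inverse positive and $g_0$ satisfies $(P_g)$, so Theorem \ref{T::6} applies and yields $\lambda_1>0$, the least eigenvalue in absolute value of $T[0]$ in $X$, together with a constant sign eigenfunction; it also gives that $g_M\geq0$ for $M\in(-\lambda_1,0]$ and that $g_M$ cannot stay nonnegative for $M<-\lambda_1$. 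Since, by Theorem \ref{T::int}, $P_T$ and $N_T$ are intervals, this already pins $\inf P_T=-\lambda_1$, and Theorem \ref{T::7} then forces $\sup N_T=\inf P_T=-\lambda_1$, so $-\lambda_1$ is the shared endpoint separating the two regimes.

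For the positive interval I would increase $M$ starting from $0$. Monotonicity (Theorem \ref{T::d1}) makes $g_M$ decrease, and the content of Theorem \ref{T::14} is that strong inverse positivity survives exactly as long as $g_M>0$ on the open square and both boundary derivatives keep their strict signs. The key step is to show that the first loss occurs on the boundary rather than in the interior: letting the source $s$ tend to an endpoint, the section $u_s=g_M(\cdot,s)$ degenerates into a genuine solution of $T[M]u=0$ on $[a,b]$ carrying a triple zero at $b$ (limit $s\to a^+$) or at $a$ (limit $s\to b^-$), because $u_s\in X$ already forces $u_s(a)=u_s''(a)=u_s(b)=u_s''(b)=0$ and the vanishing of the corresponding boundary slope supplies the extra zero. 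These limiting configurations are precisely eigenfunctions of $T[0]$ in $X_1$ and $X_3$; Theorem \ref{T::10} guarantees they live at negative eigenvalues, so the threshold sits at $M=-\lambda_2=-\max(\lambda_2',\lambda_2'')>0$, the first of the two boundary conditions to fail as $M$ grows. Disconjugacy, through the Markov factorization \eqref{e-de-T[0]} of $T[0]$, is what bounds the oscillation of $u_s$ and certifies that no interior double zero can appear earlier.

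For the negative interval I would instead decrease $M$ below $-\lambda_1$, where $g_M\leq0$ and strong inverse negativity asks for the reversed strict signs of the same three quantities. Here the relevant degeneration is of double-zero type: I would use Lemmas \ref{L::10} and \ref{L::12} together with the interval monotonicity of Corollary \ref{Cor::2} to show that, as $M$ decreases, the first obstruction to strong negativity is the appearance of a nontrivial solution satisfying the $U_{[a,b]}$ or $V_{[a,b]}$ boundary conditions, which by Lemma \ref{L::10} occurs exactly at $M=-\lambda_3'$ or $M=-\lambda_3''$; the two endpoints are handled symmetrically by passing to the adjoint operator, using the construction of Section \ref{Sc::adj}, the identification $X_k^*=X_{n-k}$ of Remark \ref{Rm::3.1}, and Lemma \ref{L::11}. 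The threshold is therefore the one reached first while descending, namely $M=-\lambda_3=-\min(\lambda_3',\lambda_3'')<0$. It remains to settle the inclusion of endpoints: $-\lambda_1$ is excluded on both sides because $T[-\lambda_1]$ is not invertible in $X$, whereas $-\lambda_2$ and $-\lambda_3$ are retained, since at those values the obstructing eigenfunctions belong to the auxiliary spaces $X_1,X_3$ (resp. $U_{[a,b]},V_{[a,b]}$) and not to $X$, so $g_M$ still exists and the a.e. strict sign requirements of Theorem \ref{T::14} persist.

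I expect the main obstacle to be exactly the identification of the outer endpoints with the stated auxiliary eigenvalues, i.e. proving that the first $M$ at which a boundary $t$-derivative of $g_M$ changes sign coincides with the relevant extremal eigenvalue in $X_1,X_3$ (positive case) or $U_{[a,b]},V_{[a,b]}$ (negative case), and simultaneously ruling out any earlier interior sign change of $g_M$. This rests on turning the degeneration of the boundary-derivative functions, either as $s$ tends to an endpoint or through a tangential interior zero, into a clean eigenvalue condition on a subinterval, for which the relations in \eqref{Ec::g3} between $g_1,g_2,g_3$ and $g_M$, the adjoint identity \eqref{Ec::gg}, and the subinterval monotonicity of Corollary \ref{Cor::2} are the essential tools, while disconjugacy and Theorem \ref{T::10} control the zero counts that prevent premature interior failure.
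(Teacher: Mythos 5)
Your outline follows the same skeleton as the paper's proof: anchor at $\bar M=0$ via Lemma \ref{L::pg}, obtain the common endpoint $-\lambda_1$ from Theorems \ref{T::6} and \ref{T::7}, detect the outer endpoints through the behaviour of $g_M$ along the four edges of $I\times I$ (passing to the adjoint operator and Remark \ref{Rm::3.1} for the edges $t=a$ and $t=b$), and exclude interior sign changes by zero-counting through the factorization \eqref{e-de-T[0]}, Lemmas \ref{L::10}, \ref{L::12} and Corollary \ref{Cor::2}. However, the step on which everything hinges --- turning the edge behaviour into an eigenvalue condition --- fails as you state it. As $s\to a^+$ the section $u_s=g_M(\cdot,s)$ does not tend to a nontrivial solution: since $g_M(t,a)\equiv 0$ (by \eqref{Ec::gg} and the adjoint boundary conditions), it tends to the zero function. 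The object carrying the sign information near $s=a$ is the normalized limit $w_M(t)=\dfrac{\partial }{\partial s}g_M^2(t,s)_{\mid s=a}$, and its boundary data cannot be ``inherited from $u_s\in X$''; they must be computed from the matrix Green's function \eqref{Ec:MG1} and the relations \eqref{Ec::g3}, which give $w_M(a)=w_M(b)=w_M''(b)=0$ and $w_M''(a)=-1$. In particular your claim that the limit still satisfies $u''(a)=0$ is not only unjustified but necessarily false: a limit satisfying all five conditions $u(a)=u''(a)=u(b)=u'(b)=u''(b)=0$ would generically be trivial, and it is precisely the normalization $w_M''(a)=-1$ that keeps $w_M$ nontrivial, so that the threshold occurs when the four $X_1$ conditions $w_M(a)=w_M(b)=w_M'(b)=w_M''(b)=0$ are reached, i.e.\ at $M=-\lambda_2'$. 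The same computation (not a boundary-condition inheritance argument) is needed for $y_M$ at $s=b$ (threshold in $X_3$) and for the adjoint functions $w_M^*$, $y_M^*$, whose boundary conditions involve the terms $v''-p_1\,v'$ of $X^*$ and are identified with $X_3$, $X_1$ only through Remark \ref{Rm::3.1}.

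A second, smaller point of order: Theorem \ref{T::7} has the hypothesis that $N_T\neq\emptyset$, so you cannot invoke it at the outset to conclude $\sup N_T=\inf P_T=-\lambda_1$; the paper states explicitly that this conclusion is available only once the negative-side construction (your third paragraph, the paper's Step 5) has produced some $M$ with $g_M\leq 0$ on $I\times I$. With these two repairs --- replacing the degenerating-section heuristic by the functions $w_M$, $y_M$, $w_M^*$, $y_M^*$ with their computed boundary conditions, and postponing Theorem \ref{T::7} until $N_T\neq\emptyset$ is established --- your plan coincides with the paper's argument.
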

\begin{proof}
 The inverse positive character for $M\in(-\lambda_1,0]$ follows from Lemma \ref{L::pg} and Theorem \ref{T::6}. Since $T[0]$ is a strongly inverse positive operator, from Theorems \ref{T::14} and \ref{T::d1} we have that the strongly inverse positive character also holds for $M\in(-\lambda_1,0]$.
 
 Once it is proved that $N_T$ is not empty, from Theorem \ref{T::7} we deduce that $\sup(N_T)=-\lambda_1$.

 The proof of the other extremes of the intervals $P_T$ and $N_T$ follows several steps.
 
  At first, we focus on the inverse positive parameter set, $P_T$. We study what happens on the boundary of $I\times I$ and, after that, we prove that the sign change must begin necessarily on the boundary. 
  
  Once we have described the interval $P_T$, we characterize $N_T$ by proving at first that there must exist an interval where the Green's function is nonpositive on a neighborhood of the boundary. And, to finish, we will see that while it is non positive on the boundary it must be nonpositive on the interior too.
  
\begin{itemize}
	\item[Step 1.] Behavior of the Green's function at $s=a$.
\end{itemize}

From equation \eqref{Ec::gg}, we know that $g_M(t,s)=g_M^*(s,t)$, where $g_M^*(t,s)$ is the Green's function related to the adjoint operator $T^*[M]$.

So, taking into account the boundary conditions verified for the adjoint operator, given in Section \ref{Sc::adj}, we know that $g_M(t,a)=g_M^*(a,t)=0$.

So, we define the following function
\begin{equation}\label{Ec::wm}w_M(t):=\dfrac{\partial }{\partial s}g_M^2(t,s)_{\mid s=a}\,,t\in I\end{equation}
where
\[g_M(t,s)=\left\lbrace \begin{array}{cc}
g_M^1(t,s)&\text{ if }  a\leq t<s\leq b\,,\\&\\
g_M^2(t,s)& \text{ if } a\leq s\leq t\leq b\,.\end{array}\right. \]

It is clear that, while $w_M> 0$  on $(a,b)$, then $g_M(t,s)$ satisfies the property
{\small \begin{equation}\label{Ec::pa}
	\forall\, t\in(a,b)\quad \exists \,\eta(t)>0 \text{ such that } g_M(t,s)=g_M^2(t,s)>0\ \forall s\in (a,a+\eta(t))\,.
	\end{equation}}
 
 Reciprocally, if $w_M$ oscillates on $I$, then $g_M(t,s)$ oscillates on a neighborhood of $s=a$.

 In order to deduce which are the boundary conditions that $w_M$ satisfies, we will use the expression of the Green's matrix related to the vectorial problem \eqref{Ec::vec} given in \eqref{Ec:MG1}.

Using the boundary conditions \eqref{Ec::Cf}, taking into account the expression of $g_3(t,s)$ given in \eqref{Ec::g3}, we have that $g_M^1(t,s)$ satisfies the following equations for $s\in(a,b)$
\[ g_M^1(a,s)=
-\dfrac{\partial}{\partial s}g_M^1(t,s)_{\mid t=a}+p_1(s)g_M^1(a,s)=0\,,\]
 and, because of the continuity for the Green matrix on the elements which do not belong to the diagonal, previous equations are verified for $s=a$ and we have

\[g_M^2(a,a)=-\dfrac{\partial}{\partial s}g_M^2(t,s)_{\mid (t,s)=(a,a)}+p_1(a)g_M^2(a,a)=0\,.\]

In particular, we  conclude that $w_M(a)=0$.
	
	Now, studying the third row of the correspondent matrix \eqref{Ec:MG1} we arrive to
	
	\[\dfrac{\partial}{\partial t^2}g_M^1(t,s)_{\mid t=a}=
	-\dfrac{\partial^3}{\partial s\partial t^2}g_M^1(t,s)_{\mid t=a}+p_1(s)\dfrac{\partial}{\partial t^2}g_M^1(t,s)_{\mid t=a}=0\,,\quad s\in(a,b)\]
	we can again consider such equalities for $s=a$, but now, since we achieve a diagonal element, by applying Remark \ref{Rm::2.5}, in terms of $g_M^2(t,s)$ we have the following system
	
		\begin{eqnarray}
		\nonumber \dfrac{\partial}{\partial t^2}g_M^2(t,s)_{\mid (t,s)=(a,a)}&=&0\,,\\\nonumber
		-\dfrac{\partial^3}{\partial s\partial t^2}g_M^2(t,s)_{\mid (t,s)=(a,a)}+p_1(s)\dfrac{\partial}{\partial t^2}g_M^2(t,s)_{\mid (t,s)=(a,a)}&=&1\,,
		\end{eqnarray}
		so, we conclude that $w_M''(a)=-1$.
		
	Analogously, we study the boundary conditions for $w_M$ at $t=b$. Now, since in this case we do not have any jump at $s=a$, the obtained system is already expressed by means of $g_M^2(t,s)$. We directly show the equations for $s=a$.
	
	\[ g_M^2(b,a)=-\dfrac{\partial}{\partial s}g_M^2(t,s)_{\mid (t,s)=(b,a)}+p_1(a)g_M^2(b,a)=0\,.\]
	
		So, we obtain $w_M(b)=0$ and, similarly, $w_M''(b)=0$.

	Thus, $w_M$ satisfies the boundary conditions
	
	\begin{equation}\label{Ec::cfW}
	w_M(a)=w_M(b)=w_M''(b)=0\,,\quad w_M''(a)=-1\,.\end{equation}
	
	 As consequence, for the particular case of $M=0$, since $g_0(t,s)\geq 0$,  using Lemma \ref{L::10} we have that $w_0(t)> 0$ for all $t\in (a,b)$.

	Now, by analogous arguments to the ones used in the proof of Lemma \ref{L::10} we can affirm that the sign change must begin on the boundary of $I$. In  this case, we are interested on the behavior for $M\geq 0$, because for $M\leq 0$ the result is already known using Theorem \ref{T::6}.

	So, necessarily the function $w_M>0$ on $(a,b)$ until $w_M'(a)=0$ or $w_M'(b)=0$.
	
	If $w_M'(a)=0$, since $w_M(a)=0$ and $w_M''(a)=-1$, we have that necessarily $w_M(t)<0$ on a neighborhood of $t=a$, so it must have changed sign before. Hence, $w_M>0$ on $(a,b)$ until it is verified $w_M(a)=w_M(b)=w_M'(b)=w_M''(b)=0$, i.e., for $M\in[0,-\lambda_2']$. Hence, we can affirm that for $M$ in such interval $g_M(t,s)$ satisfies \eqref{Ec::pa}.
	
		Let us see that for $M>-\lambda_2'$, $g_M(t,s)$ oscillates.
		
		 Suppose that $g_{\bar{M}}(t,s)\geq0$ for some ${\bar{M}}>-\lambda_2'$ and all $(t,s)\in I\times I$. From Theorem \ref{T::d1} we have  $g_{-\lambda_2'}\geq g_{\bar{M}}$. Since, on the other hand, $g_{\bar{M}}(t,a)=0$, we know that $w_{\bar{M}}(t)\leq w_{-\lambda_2'}(t)$ on $I$, then necessarily $w_{\bar{M}}'(b)\geq w_{-\lambda_2'}'(b)=0$.
		 
		 Notice that if $w_{\bar{M}}'(b)>0$, then $w_{\bar{M}}(t)<0$ on a neighborhood of $t=b$, so $g_M(t,s)$ change sign on a neighborhood of $(b,a)$. As consequence, $w_{\bar{M}}'(b)=0$.
		 
		  Using Theorem \ref{T::int}, we know that for every $M\in[-\lambda_2',\bar{M}]$, $g_M(t,s)\geq0$ for all $(t,s)\in I\times I$, so $w_M'(b)=0$. This implies that every $M\in [-\lambda_2',\bar{M}]$ is an eigenvalue of $T[0]$ in $X_1$, which contradicts the discrete character of the eigenvalues' set.
		 Hence, we can conclude that for $M>-\lambda_2'$, $g_M(t,s)$ oscillates on a neighborhood of $s=a$.
	
	\begin{itemize}
		\item[Step 2.] Behavior on a neighborhood of $s=b$.
	\end{itemize}
		
	Now, since $g_M(t,s)=g_M^*(s,t)$ and taking into account the boundary conditions given for the adjoint operator in Section \ref{Sc::adj}, we have that $g_M(t,b)=g_M^*(b,t)=0$. So, we define
	\begin{equation}\label{Ec::ym}y_M(t):=\dfrac{\partial }{\partial s}g_M(t,s)_{\mid s=b}\,,\end{equation}
	and with similar arguments, using the expression of the Green's matrix given by \eqref{Ec:MG1} and the expressions of  $g_3$ given in \eqref{Ec::g3} we have that $y_M$ satisfies the following boundary conditions
	\begin{equation}\label{Ec::cfym}y_M(a)=y_M(b)=y_M''(a)=0\,,\quad y_M''(b)=1\,.\end{equation}
	
	In this case, for $M=0$, from Lemma \ref{L::10}, we have that  $y_0< 0$ on $(a,b)$.
	
	With the same arguments as in  Step 1 we arrive at the conclusion that  Green's function must satisfy the following property
	 {\small \begin{equation}\label{Ec::pb}
	 	\forall\, t\in(a,b)\quad \exists\, \eta(t)>0 \text{ such that } g_M(t,s)=g_M^1(t,s)>0\ \forall\, s\in (b-\eta(t),b)\,
	 	\end{equation}}
	 while $M\in[0,-\lambda_2'']$, where $\lambda_2''<0$ is the biggest negative eigenvalue of $T[0]$ in $X_3$. And, we can also affirm that $g_M(t,s)$ oscillates for every $M>-\lambda_2''$.
	
	So, with this two steps we have already seen that the upper bound given by $-\lambda_2$ cannot be improved. 
	
	\begin{itemize}
		\item[Step 3.] Behavior of the Green's function on a neighborhood of $t=a$ and $t=b$.
	\end{itemize}
	
	In order to study the behavior of the Green's function on a neighborhood of $t=a$ and $t=b$, we are going to consider the adjoint operator and study, with analogous arguments to Steps 1 and 2, the behavior of $g_M^*(t,s)$ in a neighborhood of $s=a$ and $s=b$. Then, using the equality $g_M(t,s)=g_M^*(s,t)$, we can know how $g_M(t,s)$ behaves on a neighborhood of $t=a$ and $t=b$.
	
As we have seen in  Section \ref{Sc::adj}, the boundary conditions satisfied by the adjoint operator are the following
	\[v(a)=v(b)=v''(a)-p_1(a)\,v'(a)=v''(b)-p_1(b)\,v'(b)=0\,.\]
	
	Since $g_M^*(t,a)=g_M(a,t)=0$, we consider the function
	\begin{equation}\label{Ec::wm*}w_M^*(t):=\dfrac{\partial }{\partial s}g_M^*(t,s)_{\mid s=a}\,,\end{equation}
	 studying the corresponding Green's matrix of the adjoint vectorial operator, we can verify that it satisfies the following boundary conditions.
	
	\[w_M^*(a)=w_M^*(b)={w_M^*}''(b)-p_1(b)\,{w_M^*}'(b)=0\,,\quad {w_M^*}''(a)-p_1(a)\,{w_M^*}'(a)=-1\,.\]
	
	Moreover, for $M=0$, we know that $w_0^*> 0$ on $(a,b)$.
	
	With the arguments done in Section \ref{Sc::adj} it is known that $ {w_M^*}''(b)-p_1(b)\,{w_M^*}'(b)=0$ implies that $
	\dfrac{d}{dt}\left( \dfrac{1}{v_2(t)}\dfrac{d}{dt}\left( v_2(t)\,v_1(t)\,w_M^*(t)\right) \right)_{\mid t=b}=0$.
	
	Using this fact we can use the decomposition of the adjoint operator, given in \eqref{e-de-T*[0]} and the arguments of the proof of Lemmas \ref{L::10} and \ref{L::11} to conclude that $w_M^*$ cannot have any double zero while $w_M^*\geq0$ on $I$.
	
	Also, we can see that while ${w_M^*}'(b)\neq 0$, $w_M^*$ must be positive on $(a,b)$.
	
	So, we can affirm that the sign change come when $w_M^*$ satisfies the following boundary conditions:
	\[w_M^*(a)=w_M^*(b)={w_M^*}'(b)={w_M^*}''(b)-p_1(b)\,{w_M^*}'(b)=0\,,\]
	which are equivalent, since $p_1$ is a continuous function, to
	\[w_M^*(a)=w_M^*(b)={w_M^*}'(b)={w_M^*}''(b)\,.\]
	
	Hence, $w_M^*(t)\geq 0$ for $M\in[0,-\lambda_2''']$, where $\lambda_2'''<0$ is the biggest negative eigenvalue of $T^*[0]$ in $X_1$. But, since the eigenvalues of $T^*[0]$ in $X_1$ are the same as the eigenvalues of $T[0]$ in $X_{3}$  (see Remark \ref{Rm::3.1}), we have that $\lambda_2'''=\lambda_2''$.
	
	So, $g_M^*(t,s)$ satisfies the property \eqref{Ec::pa} for all $M\in[0,-\lambda_2'']$ and it oscillates for $M>-\lambda_2''$. So, we can conclude that if $M\in[0,-\lambda_2'']$, then $g_M(t,s)$ satisfies the condition

	{\small \begin{equation}\label{Ec::paa}
		\forall \,s\in(a,b)\quad \exists\, \eta(s)>0 \text{ such that } g_M(t,s)=g_M^2(t,s)>0 \ \forall\, t\in (a,a+\eta(s))
		\end{equation}}

	 Now, proceeding analogously with the function 
	 \begin{equation}\label{Ec::ym*}y_M^*(t):=\dfrac{\partial }{\partial s}g_M^*(t,s)_{\mid s=b}\,,\end{equation}
	 we can affirm that $g_M(t,s)\geq 0$ satisfies
	 	 {\small \begin{equation}\label{Ec::pbb}
	 	 	\forall\, s\in(a,b)\quad \exists\, \eta(s)>0 \text{ such that } g_M(t,s)=g_M^1(t,s)>0 \ \forall\, t\in (b-\eta(s),b)\,
	 	 	\end{equation}} for all $M\in[0,-\lambda_2']$ and that if $M>-\lambda_2'$ the Green's function oscillates.
	 
	 So,we have already seen that $g_M(t,s)$ satisfies properties \eqref{Ec::pa},\eqref{Ec::pb}, \eqref{Ec::paa} and  \eqref{Ec::pbb} for all $M\in[0,-\lambda_2]$, and that if $M>-\lambda_2$  Green's function changes its sign on $I\times I$.
	  
	  
	 \begin{itemize}
	 	\item[Step 4.] Sign change must begin on the boundary.
	 \end{itemize}
	
	For every fixed $s\in(a,b)$, we denote $u_M^s(t)\equiv g_M(t,s)$.
	
By definition of Green's function, we have that  $T[M]\,u_M^s(t)=0$, for every $t\in I\backslash\{s\}$.

So, $T[0]\,u_M^s(t)=-M\,u_M^s(t)\leq 0$, while $u_M^s(t)\geq 0$ for $M>0$.

From the decomposition given in \eqref{e-de-T[0]}, we have that $\dfrac{1}{v_2}\dfrac{d}{dt}\left( \dfrac{{u_M^s}''}{v_1}\right) $ is a non-increasing function which, since it has a jump at $t=s$,  can vanish at most twice on $I$ . With maximal oscillation it has the shape as Figure \ref{Fig::1}.

\begin{figure}[h]
	\centering
	\includegraphics[width=0.5\textwidth]{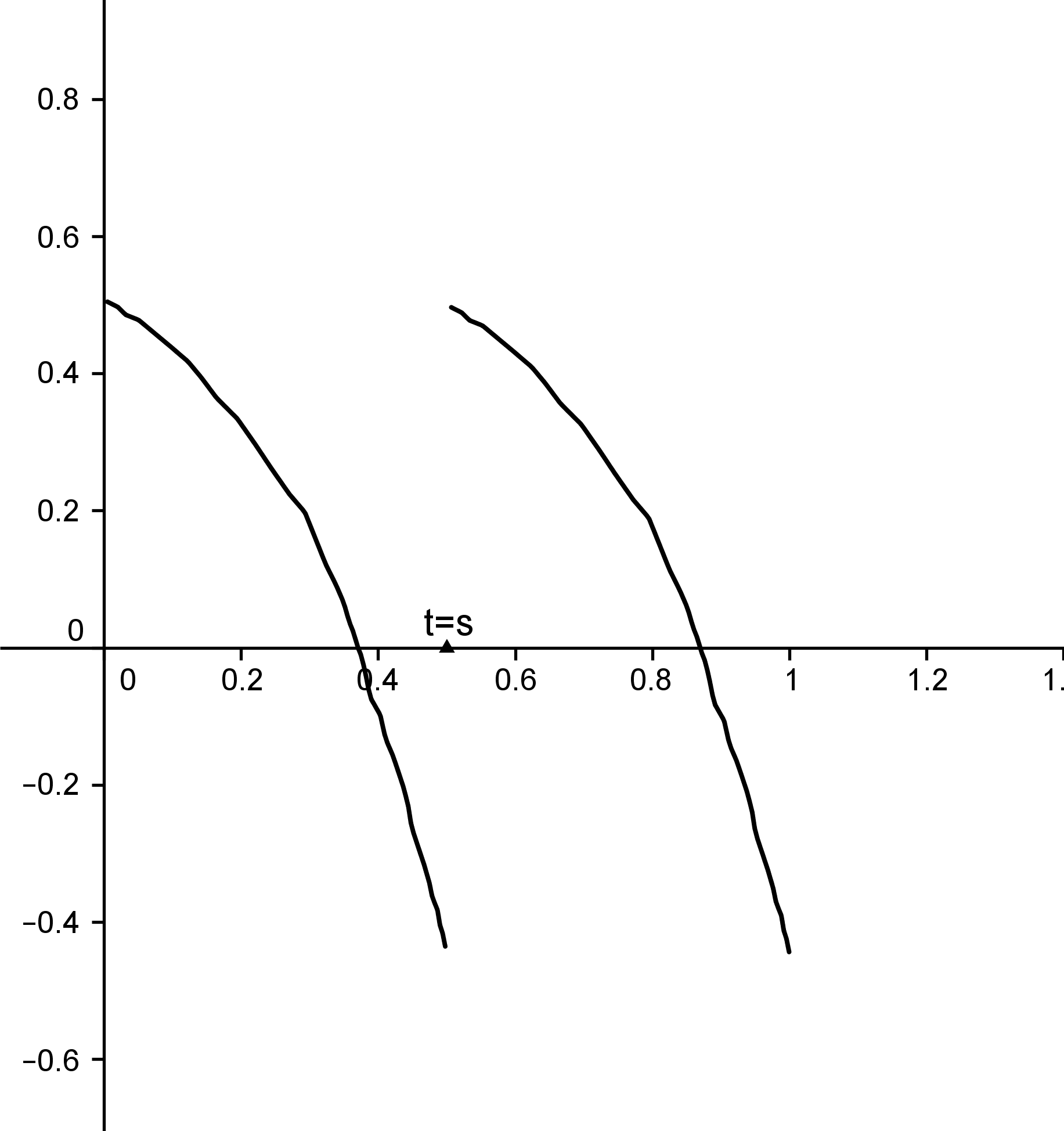}
	\caption{ \scriptsize{$\dfrac{1}{v_2}\dfrac{d}{dt}\left( \dfrac{{u_M^s}''}{v_1}\right) $ with maximal oscillation.}\label{Fig::1}}
\end{figure}
Since $v_2>0$ on $I$, $\dfrac{d}{dt}\left( \dfrac{{u_M^s}''}{v_1}\right)$ has the same sign as $\dfrac{1}{v_2}\dfrac{d}{dt}\left( \dfrac{{u_M^s}''}{v_1}\right) $. So, since $\dfrac{{u_M^s}''}{v_1}$ is a continuous function on $I$, it can have at  most four zeros, having two on the boundary. Hence, with maximal oscillation it is as the function shown in Figure \ref{Fig::2}.

\begin{figure}[h]
	\centering
	\includegraphics[width=0.5\textwidth]{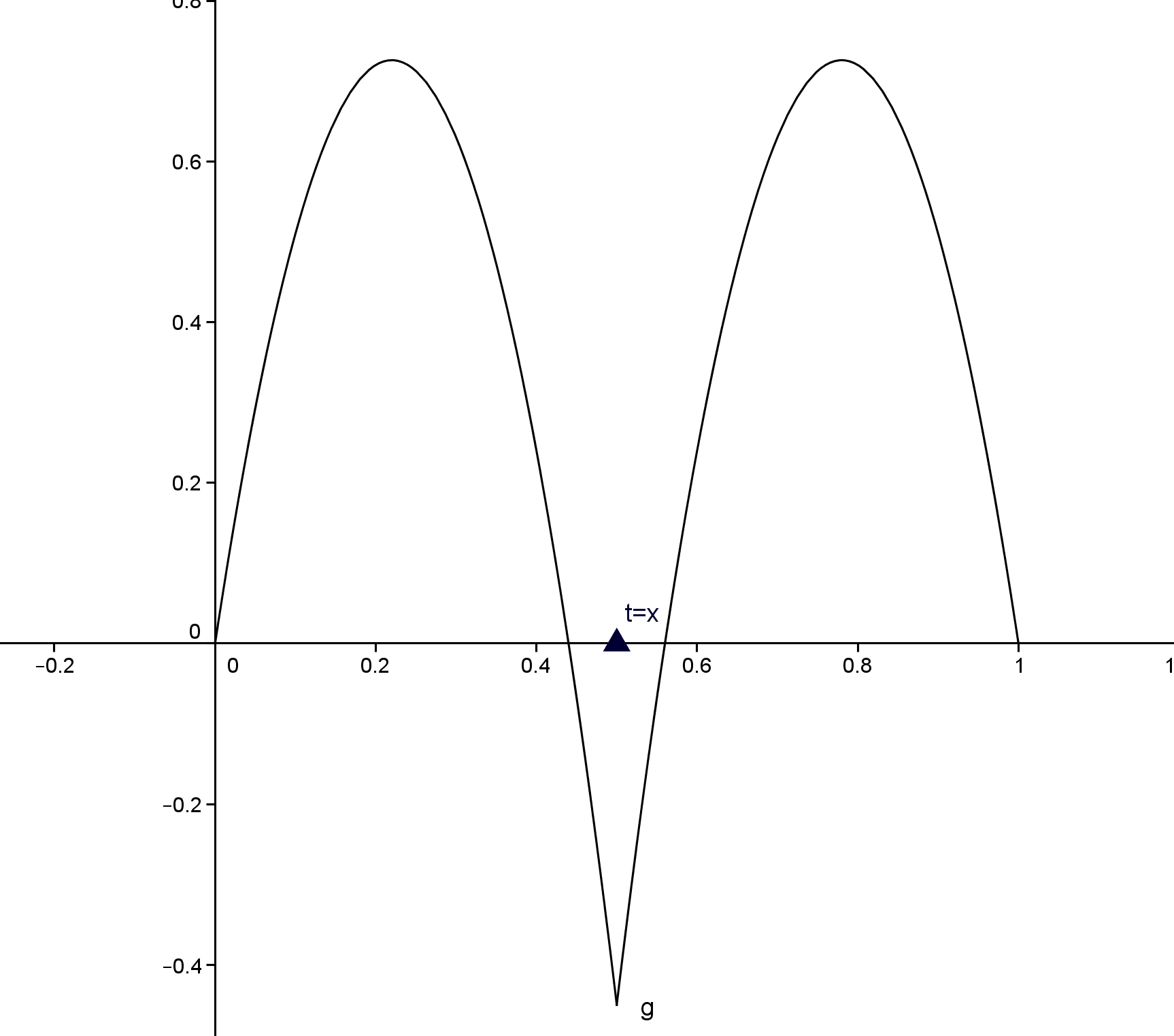}
	\caption{\scriptsize{$\dfrac{{u_M^s}''}{v_1}$ with maximal oscillation.}\label{Fig::2}}
\end{figure}

And, due to the fact that $v_1>0$ on $I$, ${u_M^s}''$  has at most two sign changes, being positive on a neighborhood of $t=a$ and $t=b$. So, ${u_M^s}'(t)$ has at most  three sign changes. However, since $u_M^s\geq 0$ and $u_M^s(a)=u_M^s(b)=0$, we can affirm that ${u_M^s}'(a)\geq 0$ and ${u_M^s}'(b)\leq 0$, which implies that ${u_M^s}'$ has at most a sign change. Then $u_M>0$ on $(a,b)$, since it vanish at $t=a$ and $t=b$.

%
%
%
Hence, since $g_M(t,s)$ is a continuous function of $M$, we conclude that the sign change has necessarily to begin on the boundary, i.e. $g_M(t,s)\geq 0$, if, and only if, $M\in(-\lambda_1,-\lambda_2]$.

Moreover, we have proved that $g_M(t,s)>0$ on $(a,b)\times(a,b)$ and that $w_M^*(t)>0$ and $y_M^*(t)<0$ on $(a,b)$. Then, using Theorem \ref{T::14} the operator $T[M]$ is strongly inverse positive if, and only if, $M\in(-\lambda_1,-\lambda_2]$.

\begin{itemize}
	\item [Step 5.] Study of the inverse negative character.
\end{itemize}

	We are going to see that in the boundary of $I\times I$ the Green's function must be nonpositive for $M\in[-\lambda_3,-\lambda_1)$. And then, we will see that this constant sign on the boundary, implies the constant sign on $(a,b)\times (a,b)$.
		
	From Lemma \ref{L::10} we know that the eigenfunctions related to $\lambda_3'$ and $\lambda_3''$ are of constant sign in $I$. Moreover, every function satisfying the boundary conditions \eqref{Ec::cfuc} on $[a,b]$ is of constant sign on $I$ for every $M\in [-\lambda_3',0]$. In particular this is true for $w_M$ defined in \eqref{Ec::wm}. The same property holds for any solution satisfying \eqref{Ec::cfud} in $[a,b]$ for all $M\in [-\lambda_3'',0]$. In particular, for $y_M$ defined in \eqref{Ec::ym}. 
		
		Since for $M=-\lambda_3'$, $w_{-\lambda_3'}(a)=w_{-\lambda_3'}'(a)=0$ and $w_{-\lambda_3'}''(a)=-1<0$, we conclude that $w_{-\lambda_3'}(t)\leq 0$ on $I$. Since it is a continuous function of $M\in[-\lambda_3',-\lambda_1]$, $w_M(t)\leq 0$, and  $g_M(t,s)$ satisfies an analogous  property to \eqref{Ec::pa} for the nonpositive case:
		 {\small \begin{equation}\label{Ec::na}
		 	\forall\, t\in(a,b)\quad \exists\, \eta(t)>0 \text{ such that } g_M(t,s)=g_M^2(t,s)<0\,, \ \forall\, s\in (a,a+\eta(t))\,.
		 	\end{equation}}
		
		Now, for $M=-\lambda_3''$, $y_{-\lambda_3''}(b)=y_{-\lambda_3''}'(b)=0$ and $y_{-\lambda_3''}''(b)=1$, so $y_{-\lambda_3''}(t)\geq0$ on $I$. Hence $y_M(t)\geq0$ for all $M\in[-\lambda_3'',-\lambda_1)$ and then $g_M(t,s)$ satisfies an analogous property to \eqref{Ec::pb} 
		 {\small \begin{equation}\label{Ec::nb}
		 	\forall \,t\in(a,b)\quad \exists\, \eta(t)>0 \text{ such that } g_M(t,s)=g_M^1(t,s)>0\,,\  \forall\, s\in (b-\eta(t),b)\,.
		 	\end{equation}}
		Studying the adjoint operator we arrive to similar conclusions for neighborhoods of $t=a$ and $t=b$ for all $M\in[-\lambda_3'',-\lambda_1)$ and $M\in[-\lambda_3',-\lambda_1)$, respectively.
		 {\small \begin{equation}\label{Ec::naa}
		 	\forall\, s\in(a,b)\quad \exists \,\eta(s)>0 \text{ such that } g_M(t,s)=g_M^2(t,s)<0\,,\ \forall\, t\in (a,a+\eta(s))\,.
		 	\end{equation}}
		 {\small \begin{equation}\label{Ec::nbb}
		 	\forall \,s\in(a,b)\quad \exists \,\eta(s)>0 \text{ such that } g_M(t,s)=g_M^1(t,s)<0\,,\ \forall\, t\in (b-\eta(s),b)\,.
		 	\end{equation}}
		
		Then $g_M(t,s)$ satisfies \eqref{Ec::na}, \eqref{Ec::nb}, \eqref{Ec::naa} and \eqref{Ec::nbb} for all $M\in[-\lambda_3,-\lambda_1)$.
	
		\vspace{0.8cm}
		
		For every $s\in(a,b)$, let  us denote  $u_M^s(t)\equiv g_M(t,s)$.
		
		Now, let us suppose that the Green's function has a zero on $(a,b)\times (a,b)$ for some $\bar{M}\in[-\lambda_3,-\lambda_1)$, hence there exists $\bar{s}\in(a,b)$, such that $u_{\bar{M}}^{\bar{s}}$ has a zero on $(a,b)$. Since $\bar{M}\in[-\lambda_3,-\lambda_1)$, $g_{\bar{M}}(t,s)$ satisfies properties \eqref{Ec::naa} and \eqref{Ec::nbb}, hence $u_{\bar{M}}^{\bar{s}}(t)<0$ on a neighborhood of $t=a$ and $t=b$. 
				
		So, under our assumption, there are two possibilities, either $u_{\bar{M}}^{\bar{s}}$ has a double zero or $u_{\bar{M}}^{\bar{s}}$ has at least two different zeros.

		\vspace{0.5cm}	
		If $u_{\bar{M}}^{\bar{s}}$  has a double zero at a point $c\in(a,b)$, we have two possibilities either $c\geq \bar{s}$ or $c\leq \bar{s}$.
		
		If $c\geq \bar{s}$, there exists an eigenvalue of $T[0]$ on the interval $[c,b]$, verifying the correspondent boundary conditions to $U_{[c,b]}$ on $[c,b]\subset I$. And this eigenvalue is less or equal than $\lambda_3''$, so this fact contradicts  Lemma \ref{L::12}.
				
		If $c\leq \bar{s}$, there exists an eigenvalue of $T_4[0]$ on $[a,c]\subset I$, verifying the boundary conditions correspondent to $V_{[a,c]}$. And, again this value is less or equal than $\lambda_3'$, which contradicts  Lemma \ref{L::12}.
		
		\vspace{0.5cm}
			If $u_{\bar{M}}^{\bar{s}}$ has two different zeros at points $c_1,c_2\in(a,b)$, such that $c_1<c_2$. Since $u_{\bar{M}}^{\bar{s}}$ is a $C^1$ function  on $I$, it must exist $d\in (c_1,c_2)$ such that ${u_{\bar{M}}^{\bar{s}}}'(d)=0$. Again, we have two possibilities either $d\leq \bar{s}$ or $d\geq \bar{s}$.
						
			If $d\leq \bar{s}$, $u\in C^4([a,d])$. From Corollary \ref{Cor::2}, we know that ${\lambda_3''}_{[a,d]}>{\lambda_3''}_{[a,b]}\equiv\lambda_3''\geq\lambda_3$, so $\bar{M}\in[-{\lambda_3''}_{[a,d]},-\lambda_1)$ and  $u_{\bar{M}}^{\bar{s}}(a)={u_{\bar{M}}^{\bar{s}}}''(a)={u_{\bar{M}}^{\bar{s}}}'(d)=0$. Hence, by applying Lemma \ref{L::10}, $u_{\bar{M}}^{\bar{s}}$ does not have any zero on $(a,d)$, but $u_{{\bar{M}}^{\bar{s}}}(c_1)=0$ and $c_1\in (a,d)$.
							
			Finally, if $d\geq \bar{s}$, $u\in C^4([d,b])$. From Corollary \ref{Cor::2}, we know that ${\lambda_3'}_{[d,b]}>{\lambda_3'}_{[a,b]}\equiv\lambda_3'\geq\lambda_3$, so $\bar{M}\in[-{\lambda_3'}_{[d,b]},-\lambda_1)$ and  ${u_{\bar{M}}^{\bar{s}}}'(d)=u_{\bar{M}}^{\bar{s}}(b)={u_{\bar{M}}^{\bar{s}}}''(b)=0$. Hence, by applying Lemma \ref{L::10}, $u_{\bar{M}}^{\bar{s}}$ does not have any zero on $(d,b)$, but $u_{\bar{M}}^{\bar{s}}(c_2)=0$ and $c_2\in (d,b)$.
							
		\vspace{0.5cm}		
	
		Hence, we arrive to a contradiction of supposing that while $M\in[-\lambda_3,-\lambda_1)$, $g_M(t,s)$ has a zero on $(a,b)\times(a,b)$. So, this together with properties \eqref{Ec::na}, \eqref{Ec::nb}, \eqref{Ec::naa} and \eqref{Ec::nbb}, tell us that $N_T\neq \emptyset$ and, moreover, that $	[-\lambda_3,-\lambda_1)\in N_T$.
		
		To see that in fact  $	[-\lambda_3,-\lambda_1)=N_T$, we do a similar argument to end of Step 1, to see that for $M<-\lambda_3$, the Green's function has to change sign necessarily.	
		
		Moreover, we have seen that if $M\in[-\lambda_3,-\lambda_1)$, then $g_M(t,s)<0$ on $(a,b)\times(a,b)$, and, furthermore, 	 $\frac{\partial }{\partial t}g_M(t,s)_{\mid t=a}<0$ and $\frac{\partial }{\partial t}g_M(t,s)_{\mid t=b}>0$ for $s\in(a,b)$. Then $T[M]$ is strongly inverse negative in $X$ if, and only if, $M\in [-\lambda_3,-\lambda_1)$.	
		\end{proof}

	From Theorem \ref{T::1} we obtain a direct corollary by using the method of upper and lower solutions, see \cite{cacisa} for details.
	
	\begin{corollary}\label{Cor::1}
		We consider the operator $T[c]\,u(t)\equiv u^{(4)}(t)+p_1(t)\,u'''(t)+p_2(t)\,u''(t)+c(t)\,u(t)$, with $c\in C(I)$, under the hypothesis of Theorem \ref{T::1}. Then,
		\begin{itemize}
			\item If $-\lambda_1<c(t)\leq -\lambda_2$ for every $t\in I$, then $T[c]$ is strongly inverse positive in $X$.
			\item If $-\lambda_3\leq c(t)<-\lambda_1$ for every $t\in I$, then $T[c]$ is strongly inverse negative in $X$.
		\end{itemize}
	\end{corollary}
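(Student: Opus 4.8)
The plan is to transfer the constant coefficient characterization of Theorem \ref{T::1} to the variable coefficient $c$ by squeezing $c$ between its extreme values and invoking the method of upper and lower solutions. Since $c\in C(I)$ and $I$ is compact, first I would set $m=\min_{t\in I}c(t)$ and $M=\max_{t\in I}c(t)$. In the inverse positive case the hypothesis reads $-\lambda_1<m\le c(t)\le M\le-\lambda_2$, so by Theorem \ref{T::1} both constant operators $T[m]$ and $T[M]$ are strongly inverse positive on $X$; in particular they are invertible, their Green's functions $g_m,g_M$ are nonnegative, and by Theorems \ref{T::d1} and \ref{T::int} the whole family $T[\bar M]$, $\bar M\in[m,M]$, keeps a nonnegative Green's function. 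I would then use $T[M]$ as the comparison operator, whose inverse is order preserving.

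Next I would rewrite the forced problem as a fixed point equation. Let $u\in X$ satisfy $T[c]\,u=\sigma\gneqq0$. Writing $T[M]\,u=\sigma+(M-c)\,u$ and inverting $T[M]$ gives the Fredholm equation $u(t)=\int_a^b g_M(t,s)\,\bigl(\sigma(s)+(M-c(s))\,u(s)\bigr)\,ds$, whose kernel carries the nonnegative factor $M-c\ge0$ against the nonnegative Green's function $g_M$. Taking $u_0\equiv0$ as a lower solution and iterating $u_{k+1}=T[M]^{-1}\bigl[\sigma+(M-c)\,u_k\bigr]$ produces, by the inverse positivity of $T[M]$ and the sign $M-c\ge0$, a nondecreasing sequence of nonnegative functions; its limit is the unique solution $u$, forcing $u\ge0$ and $g_M\le g_c$, while the symmetric scheme built on $T[m]$ yields $g_c\le g_m$, so altogether $0\le g_M\le g_c\le g_m$.

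The hard part will be the convergence of this monotone iteration, which amounts to the invertibility of $\mathrm{Id}-\mathcal{K}$ for the positive integral operator $\mathcal{K}v(t)=\int_a^b g_M(t,s)\,(M-c(s))\,v(s)\,ds$. Here the strict separation from the first eigenvalue is decisive: denoting by $\mathcal{G}_M$ the integral operator with kernel $g_M$, whose spectral radius equals $1/(M+\lambda_1)$, the reciprocal of the least eigenvalue of $T[M]$ on $X$, positivity together with $0\le M-c\le M-m$ gives $r(\mathcal{K})\le(M-m)\,r(\mathcal{G}_M)=\frac{M-m}{M+\lambda_1}$, which is strictly below $1$ precisely because $m=\min_I c>-\lambda_1$. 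The remaining condition $M\le-\lambda_2$ serves to keep the whole comparison interval $[m,M]$ inside the region where the constant operators have a nonnegative Green's function. This spectral estimate yields the invertibility of $T[c]$, the existence of $g_c$, and the squeeze above; assembling these pieces carefully is the delicate analytic step and is the content adapted from \cite{cacisa}.

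Finally I would recover the full strongly inverse positive conclusion. Once $0\le g_M\le g_c\le g_m$ is known, strict positivity of $g_c$ on $(a,b)\times(a,b)$ and the boundary derivative signs $\frac{\partial}{\partial t}g_c(t,s)_{\mid t=a}>0$ and $\frac{\partial}{\partial t}g_c(t,s)_{\mid t=b}<0$ are inherited through the squeeze from the corresponding strict interior and boundary estimates already available for $g_m$ and $g_M$, after which Theorem \ref{T::14} converts them into the statement that $T[c]\,u\gneqq0$ implies $u>0$ on $(a,b)$ with $u'(a)>0$ and $u'(b)<0$, exactly as in Lemma \ref{L::pg}. The strongly inverse negative case is identical after reversing every sign: one takes $m,M\in[-\lambda_3,-\lambda_1)$, works with the nonpositive comparison Green's functions $g_m,g_M\le0$, and obtains $g_m\le g_c\le g_M\le0$, whence the negative counterpart of Theorem \ref{T::14} applies.
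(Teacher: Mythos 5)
The paper itself proves this corollary in a single line, deferring to the method of upper and lower solutions of \cite{cacisa}; your proposal replaces that citation by an explicit operator-theoretic argument, and in the inverse positive case it is correct and essentially self-contained. Writing $T[M]u=T[c]u+(M-c)u$ with $M=\max_I c$, $m=\min_I c$, the identification $r(\mathcal{G}_M)=1/(M+\lambda_1)$ is legitimate: $T[0]=L_2\circ\frac{d^2}{dt^2}$ is disconjugate by Theorem \ref{T::comp}, so Theorem \ref{T::10} (with $n-k=2$ even) excludes negative eigenvalues of $T[0]$ in $X$, hence the least eigenvalue of $T[M]$ in absolute value is $\lambda_1+M$; kernel domination then gives $r(\mathcal{K})\leq (M-m)/(M+\lambda_1)<1$ exactly because $m>-\lambda_1$, and the positive Neumann series yields invertibility of $T[c]$, $g_c\geq g_M\geq 0$, and the transfer of the interior positivity and boundary derivative signs from $g_M$ to $g_c$. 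Two small repairs: Theorem \ref{T::14} is stated for the constant coefficient operator, so either remark that its proof extends verbatim to $T[c]$ or argue directly from $u=T[c]^{-1}\sigma\geq \mathcal{G}_M\sigma$; and your ``symmetric scheme built on $T[m]$'' is not monotone (its kernel $(m-c(s))\,g_m(t,s)$ is nonpositive), although the inequality $g_c\leq g_m$, which you never actually need, follows at once from inverse positivity of $T[m]$ applied to $\mathcal{G}_m\sigma-u$.

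The inverse negative case, however, is not ``identical after reversing every sign,'' and as written it is wrong on two counts. First, the claimed chain $g_m\leq g_c\leq g_M\leq 0$ is backwards: by Theorem \ref{T::d1} the Green's function decreases in the zeroth-order coefficient, so $m\leq c\leq M$ forces $g_M\leq g_c\leq g_m\leq 0$. Second, and more seriously, the comparison operator must switch ends. In the negative range $g_M\leq 0$ while $M-c\geq 0$, so the perturbation kernel $(M-c(s))\,g_M(t,s)$ is nonpositive and the Neumann series built on $T[\max_I c]$ is no longer a series of positive operators: your monotonicity and sign arguments collapse. One must instead decompose around $m=\min_I c$, where the kernel $(m-c(s))\,g_m(t,s)\geq 0$ is again positive; the relevant spectral radius is $r(-\mathcal{G}_m)=1/(-\lambda_1-m)$, since by the $(N_g)$ counterpart of Theorem \ref{T::6} (the result behind Theorem \ref{T::7}) the least eigenvalue of $T[m]$ in absolute value is the negative one, $\lambda_1+m$. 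The decisive estimate becomes
\begin{equation*}
r(\mathcal{K}')\leq \frac{M-m}{-\lambda_1-m}<1 \quad\Longleftrightarrow\quad M<-\lambda_1,
\end{equation*}
so the strict separation that drives the proof is now the hypothesis $c(t)<-\lambda_1$ at the \emph{other} end of the interval, while $-\lambda_3\leq c$ only guarantees $g_m\leq 0$. With this repair one gets $T[c]^{-1}\sigma\leq\mathcal{G}_m\sigma\leq 0$, hence $g_c\leq g_m\leq 0$, and the strong negativity (interior strict sign and boundary derivatives) is inherited from $g_m$, not from $g_M$.
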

	\section{Particular cases}
	In this section, in order to see the usefulness of given results, several examples where it can be applied are shown. 
		\subsection{ Operator $u^{(4)}-p\,u''$, with  $p\geq0$.}
		
		Now, let us consider the operator given by the expression $u^{(4)}-p\,u''+M\,u$, with $p\geq 0$ defined on $I$. 
		
		In \cite[Theorem 2.1]{CaSaa1} it is proved that the second order linear differential equation $u''(t)+m\,u(t)=0$ is disconjugate on $I$ if, and only if, $m\in\left( -\infty, \left( \frac{\pi}{b-a}\right) ^2\right)$. 	In particular, $u''(t)-p\,u(t)=0$ is always a disconjugate equation on $I$ for all $p>0$, so, we  can apply  Theorem \ref{T::1}. 
		
		So, in order to apply Theorem \ref{T::1} we have to obtain the eigenvalues of this operator in $X$, $X_3$, $X_1$, $U_{[a,b]}$ and $V_{[a,b]}$. 
		
		In this particular case, the eigenvalues of the operator in $X_1$ and $X_3$ are the same, and also the eigenvalues in $U_{[0,1]}$ and $V_{[0,1]}$ coincide. Indeed, if we have $u\in X_1$ (resp. $u\in U_{[0,1]}$) such that  $u^{(4)}(t)-pu''(t)+M\,u(t)=0$, then $v(t)=u(1-t)$ satisfies $v\in X_3$ (resp. $v\in V_{[0,1]}$) and $v^{(4)}(t)-pv''(t)+M\,v(t)=0$ on $I$. 
		
		We consider the general expression of the solution of equation $u^{(4)}(t)-p\,u''(t)=\lambda\, u(t)$ to obtain the different eigenvalues.
		
		So, the eigenvalues of $u^{(4)}-p\,u''$ in $X$ are given by  \[\lambda=k^4\,\left( \dfrac{\pi}{b-a}\right) ^4+k^2\,p\,\left( \dfrac{\pi}{b-a}\right)^2\,,\quad k=1,2,3,\dots\] 
		
		In particular, the least positive eigenvalue is given by $\lambda_1(p)=\left( \frac{\pi}{b-a}\right) ^4+p\,\left( \frac{\pi}{b-a}\right) ^2$.
		
		The eigenvalues of $u^{(4)}-p\,u''$  in $X_3$ are given as $-\lambda$, where $\lambda$ is a positive solution of
		\[\frac{ \tan \left(\sqrt[4]{\lambda } (b-a) \sin \left(\frac{1}{2} \cot ^{-1}\left(\frac{p}{\sqrt{4 \lambda
					-p^2}}\right)\right)\right)}{\sin \left(\frac{1}{2} \cot ^{-1}\left(\frac{p}{\sqrt{4 \lambda -p^2}}\right)\right)}=\frac{2 \tanh \left(\frac{1}{2} \sqrt[4]{\lambda } (b-a) \sqrt{\sqrt{\frac{p^2}{\lambda }}+2}\right)}{ \sqrt{\sqrt{\frac{p^2}{\lambda }}+2}}\,.\]
		
		By denoting  $m_1$ as the least positive solution of this equation,  then $\lambda_2(p)=-m_1$ is the biggest negative eigenvalue of $u^{(4)}-p\,u''$ in $X_3$.
		
		Finally, the eigenvalues of $u^{(4)}-p\,u''$ in $U_{[a,b]}$ are given as the positive solutions of 
		\[\dfrac{\tan\left( \frac{(b-a)\,\sqrt{\sqrt{p^2+4\,\lambda}-p}}{\sqrt{2}}\right) }{\sqrt{\sqrt{p^2+4\,\lambda}-p}}=\dfrac{\tanh\left( \frac{(b-a)\,\sqrt{\sqrt{p^2+4\,\lambda}+p}}{\sqrt{2}}\right) }{\sqrt{\sqrt{p^2+4\,\lambda}+p}}\,.\]
		
		Then the least positive solution of this equation, $\lambda_3(p)$, is the least positive eigenvalue of $u^{(4)}-p\,u''$ in $U_{[a,b]}$.
		
		So, now  we apply  Theorem \ref{T::1} to obtain that $u^{(4)}-p\,u''+M\,u$ is strongly inverse positive in $X$ if, and only if $M\in\left( \left( \frac{\pi}{b-a}\right) ^4+p\,\left( \frac{\pi}{b-a}\right) ^2,-\lambda_2(p)\right] $. And it is strongly inverse negative in $X$ if, and only if, $M\in\left[ -\lambda_3(p),-\left( \frac{\pi}{b-a}\right) ^4+p\,\left( \frac{\pi}{b-a}\right) ^2\right) $.
	\subsubsection{The operator $u^{(4)}$}
	As a particular case we can consider $p=0$ and $[a,b]=[0,1]$, and we have the operator $u^{(4)}(t)+M\,u(t)$ on $[0,1]$. This result has already been studied in \cite{cacisa} and \cite{Sch}, but in these cases the expression of  Green's function was needed and no relationship with spectral theory has been shown.

	In this case $\lambda_1(0)=\pi^4$ is the least positive eigenvalue of $u^{(4)}$ in $X$.
	
	Moreover, $\lambda_2(0)=-\lambda^4$, where $\lambda$ is the least positive zero of
	\[\tan \left(\frac{\lambda }{\sqrt{2}}\right)=\tanh \left(\frac{\lambda }{\sqrt{2}}\right)\,,\]
	($\lambda_2(0)\approxeq -5.55^4$) is the biggest negative eigenvalue of $u^{(4)}$ in $X_3$.
	
	Finally, $\lambda_3(0)=\lambda^4$, with $\lambda$ is  least positive root of
	
	\[\tanh (\lambda)=\tan (\lambda)\,,\]
($\lambda_3(0)\approxeq 3.927^4$) is the least positive eigenvalue of $u^{(4)}$ in $U_{[0,1]}$.

Then, we can use  Theorem \ref{T::1} to conclude that $u^{(4)}+M\,u$ is strongly inverse positive if, and only if $M\in (-\pi^4,-\lambda_2(0)]$ and $u^{(4)}+M\,u$ is strongly inverse negative if, and only if $M\in [-\lambda_3(0),-\pi^4)$. These results were obtained in \cite{cacisa}, the inverse negative character, and in \cite{Sch}, the positive one; but in our case we do not need to obtain the expression of  Green's function.

	\subsection{Operators with non constant coefficients}
	
	In this Subsection we are going to study some operator with non constant coefficients, obtaining its eigenvalues numerically.
	
	The first operator which we are going to consider is $ u^{(4)}-t^2\,u''$ on $[0,1]$. Its eigenvalues in the different spaces are obtained numerically by means of the representation of the corresponding Wronskians, and are the following
		\begin{itemize}
			\item $\lambda_1\approxeq 3.164^4$ is the least positive eigenvalue in $X$.
			\item $\lambda_2''\approxeq -5.554^4$ is the biggest negative eigenvalue in $X_3$.
			\item $\lambda_2'\approxeq -5.5716^4$ is the biggest negative eigenvalue in $X_1$.
			\item  $\lambda_3''\approxeq 3.934^4$ is the least positive eigenvalue in $V_{[0,1]}$.
			\item $\lambda_3'\approxeq 3.946^4$ is the least positive eigenvalue in $U_{[0,1]}$.
		\end{itemize}
		
		In particular,   $\lambda_2\approxeq -5.554^4$ and $\lambda_3\approxeq 3.934^4$.
		
	By means of \cite[Theorem 2.1]{CaSaa1}, we know that $u''(t)-t^2\,u(t)+M\,u(t)=0$ is a disconjugate equation on $[0,1]$ if, and only if $M\in(-\infty,-\bar \lambda)$, where $\bar \lambda \approxeq-10.16$. In particular, it is disconjugate  for $M=0$.

	 So we can apply  Theorem   \ref{T::1} to conclude that  operator $u^{(4)}-t^2 \, u''+M\,u$ is strongly inverse positive in $X$ if, and only if, $M\in(-\lambda_1,-\lambda_2]\approxeq(-3.164^4,5.554^4]$ and it is strongly inverse negative in $X$ if, and only if $M\in[-\lambda_3,-\lambda_1)\approxeq [-3.934^4,-3.164^4)$.
		\vspace{0.8cm}
		
		Now, let us study the operator $u^{(4)}+ 16\, t^2\,u''+M\,u$. Using again \cite[Theorem 2.1]{CaSaa1}, we can see that the second order linear differential equation $u''(t)+16\,t^2\,u(t)+M\,u[t]=0$ is disconjugate on $[0,1]$ if, and only if $M\in(-\infty,-{\lambda_1^2}^*)$, where ${\lambda_1^2}^*\approxeq-5.05$. In particular, it holds  for $M=0$.

		 Then, in order  to apply  Theorem \ref{T::1}  we can obtain the eigenvalues of $u^{(4)}+16\,t^2\,u''$ in the different spaces. As in the previous case we obtain them numerically by means of the study of the corresponding Wronskians.

		\begin{itemize}
			\item $\lambda_1\approxeq 2.662^4$ is the least positive eigenvalue in $X$.
			\item $\lambda_2''\approxeq -5.5334^4$ is the biggest negative eigenvalue in $X_3$.
			\item $\lambda_2'\approxeq -5.2405^4$ is the biggest negative eigenvalue in $X_1$.
			\item  $\lambda_3''\approxeq 3.555^4$ is the least positive eigenvalue in $V_{[0,1]}$.
			\item $\lambda_3'\approxeq 3.809^4$ is the least positive eigenvalue in $U_{[0,1]}$.
		\end{itemize}
		
		So, $\lambda_2\approxeq -5.2405^4$ and $\lambda_3\approxeq 3.555^4$ and we can affirm that  operator $u^{(4)}+16\,t^2\,u''+M\,u$ is strongly inverse positive in $X$ if, and only if, $M\in(-\lambda_1,-\lambda_2]\approxeq (-2.662^4,5.2405^4]$ and it is strongly inverse negative in $X$ if, and only if, $M\in[-\lambda_3,-\lambda_1)\approxeq [-3.555^4,-2.662^4)$.
		\vspace{0.8 cm}
		
		Finally, we consider  operator $u^{(4)}+2t\,u'''+2\,u''+M\,u$ on $[0,1]$.
		
		Every solution of $u''(t)+2\,t\,u'(t)+2\,u(t)=0$ satisfying $u(0)=0$ and $u'(0)=1$, follows the expression \[y(t)=e^{-t^2}\,\int_0^te^{s^2}\,ds\,,\] which does not vanish for every $t>0$. Then the second order linear  differential equation is disconjugate on every interval $[0,c]$, in particular on $[0,1]$. So we can apply  Theorem \ref{T::1}. In order to do that, we obtain numerically the eigenvalues of $u^{(4)}+2\,t\,u'''+2\,u''$ in the different spaces of definition as follows:
		
		\begin{itemize}
			\item $\lambda_1\approxeq3.079 ^4$ is the least positive eigenvalue in $X$.
			\item $\lambda_2''\approxeq -5.595^4$ is the biggest negative eigenvalue in $X_3$.
			\item $\lambda_2'\approxeq -5.606^4$ is the biggest negative eigenvalue in $X_1$.
			\item  $\lambda_3''\approxeq 3.986^4$ is the least positive eigenvalue in $V_{[0,1]}$.
			\item $\lambda_3'\approxeq3.854^4$ is the least positive eigenvalue in $U_{[0,1]}$.
		\end{itemize}
		
		Then, $\lambda_2\approxeq -5.595^4$ and $\lambda_3\approxeq3.854^4$ and we can conclude, applying Theorem \ref{T::1}, that $u^{(4)}+2\,t\,u'''+2\,u''+M\,u$ is inverse positive in $X$ if, and only if, $M\in (-\lambda_1,-\lambda_2]\approxeq(-3.079^4,5.595^4]$ and it is inverse negative in $X$ if, and only if, $M\in[-\lambda_3,-\lambda_1)\approxeq[-3.854^4,-3.079^4)$.


\begin{thebibliography}{99}\addcontentsline{toc}{theorem}{Bibliography}
			
			
			
				\bibitem{BaFry}
				G. Bartuzel, A. Fryszkowski \emph{Matrix Dirichlet problem with applications to hinged beam differential equations},
			J. Math. Anal. Appl. 428 (2015) 98?112.
			
			
			
			\bibitem{Cab} A. Cabada, \emph{Green's Functions in the Theory of Ordinary Differential Equations}, Springer Briefs in Mathematics, 2014.
			
	
			
		
			
		
		
			
			
			
			\bibitem{CCM}
			A. Cabada, J.A. Cid, B. M\'aquez-Villamar\' {\i}n, \emph{Computation of Green's functions for boundary value problems with Mathematica},
			Applied Mathematics and Computation 219 (2012) 1919-1936.
			
			\bibitem{cacisa}
			A. Cabada, J.A. Cid, L. Sanchez, \emph{Positivity and lower and upper solutions for fourth order boundary value problems}, Nonlinear Anal. 67 (2007), 1599-1612.
			
			 \bibitem{CE}
			 A. Cabada, R. R. Engui\c{c}a \emph{Positive solutions of fourth order problems with clamped beam boundary conditions}, Nonlinear Anal. 74 (2011), 3112-3122.
			 
			 \bibitem{CaFe}
			 A. Cabada, C. Fern\'andez-G\'omez \emph{Constant Sign Solutions of Two-Point Fourth Order Problems}, Appl. Math. Comput. 263 (2015), 122-133. 
		
		
			
			\bibitem{CaSaa1}
			A. Cabada, L. Saavedra \emph{Disconjugacy characterization by means of spectral $(k,n-k)$ problems}, Appl. Math. Lett. 52 (2016), 21-29.
			
			\bibitem{CaSaa} {A. Cabada, L. Saavedra,}\emph{ The eigenvalue Characterization for the constant Sign Green's functions of $(k,n-k)$ problems}, Boundary Value Problems, \textit{to appear}
			\bibitem{Cop} W. A. Coppel, \emph{Disconjugacy}, Lecture Notes in Mathematics, Vol. 220. Springer-Verlag, Berlin-New York, 1971.
			
			
		\bibitem{Dra} 
		P. Drabek, G. Holubová \emph{Positive and negative solutions of one-dimensional beam equation}, Appl. Math. Lett. 51 (2016), 1-7.
		
			\bibitem{Liu}
			B.Liu, \emph{Positive solutions of fourth-order two point boundary value problems},
			Applied Mathematics and Computation 148 (2004) 407-420.
	
		\bibitem{Neh} Z. Nehari, \emph{Disconjugate linear differential equations}. Trans. Amer. Math. Soc. 129 (1967), 500--516. 
			
			
		
	
			
		\bibitem{Sch} J. Schröder, \emph{ Operator inequalities} in: Mathematics in Science and Engineering, vol. 147, Academic Press, nc., New York-London, 1980.
		
			
	
			
			
		
			
			
			
			
			
		\end{thebibliography}
\end{document}